\documentclass[a4paper,reqno,11pt]{amsart}
\usepackage{amsfonts,amsmath,amsthm,amssymb,stmaryrd}
\newtheorem{theorem}{Theorem}[section]
\newtheorem{lemma}[theorem]{Lemma}

\newtheorem{Remark}[theorem]{Remark}
\numberwithin{equation}{section}

\usepackage[left=1 in, right=1 in,top=1 in, bottom=1 in]{geometry}


 
 \providecommand{\norm}[1]{\left\Vert#1\right\Vert}

\def\r3{\mathbb{R}^3}

\begin{document}
\title[Decay of VPB]{Decay of the Vlasov-Poisson-Boltzmann system}

\author{Yanjin Wang}
\address{
School of Mathematical Sciences\\
Xiamen University\\
Xiamen, Fujian 361005, China} \email{yanjin$\_$wang@xmu.edu.cn}
\thanks{This work is partially supported by National Natural Science Foundation of China-NSAF (No. 10976026)}
\keywords{Vlasov-Poisson-Boltzmann; Energy method; Optimal decay rates; Sobolev interpolation; Negative Sobolev space}
\subjclass[2000]{76P05; 82C40}

\begin{abstract}
We establish the time decay rates of the solutions to the Cauchy problem for the two-species Vlasov-Poisson-Boltzmann system near Maxwellians via a
refined pure energy method. The negative Sobolev norms are shown to be preserved along time evolution and enhance the decay rates. The total density of
two species of particles decays at the optimal algebraic rate as the Boltzmann equation, but the disparity between two species and the electric field decay at an
exponential rate. This phenomenon reveals the essential difference when compared to the one-species Vlasov-Poisson-Boltzmann system in which the
electric field decays at the optimal algebraic rate or the Vlasov-Boltzmann system in which the disparity between two species decays at the optimal
algebraic rate. Our proof is based on a family of scaled energy
estimates with minimum derivative counts and interpolations among them
without linear decay analysis, and a reformulation of the problem which well displays the cancelation property of the two-species system.
\end{abstract}
\maketitle


\section{Introduction}
The dynamics of charged dilute particles (e.g., electrons and ions) in the absence of magnetic effects can be described by the
Vlasov-Poisson-Boltzmann system:
\begin{equation}\label{VPB0}
\begin{split}
&\partial_tF_++v\cdot\nabla_xF_++\nabla_x\Phi\cdot\nabla_vF_+=Q(F_+,F_+)+Q(F_+,F_-),
\\&\partial_tF_-+v\cdot\nabla_xF_--\nabla_x\Phi\cdot\nabla_vF_-=Q(F_-,F_+)+Q(F_-,F_-),
\\&\Delta_x\Phi=\int_{\r3}F_+-F_-\,dv,
\end{split}
\end{equation}
with initial data $F_\pm(0,x,v)=F_{0,\pm}(x,v)$. Here $F_\pm(t,x,v)\ge 0$ are the number density functions for the ions $(+)$ and electrons $(-)$
respectively, at time $t\ge 0$, position $x=(x_1,x_2,x_3)\in \mathbb{R}^3$ and velocity $v=(v_1,v_2,v_3)\in \mathbb{R}^3$. The self-consistent
electric potential $\Phi=\Phi(t,x)$ is coupled with $F_\pm(t,x,v)$ through the Poisson equation. The collision between particles is given by the
standard Boltzmann collision operator $Q(h_1,h_2)$ with hard-sphere interaction:
\begin{equation}\label{Boltzmann operator}
Q(h_1,h_2)(v)=\int_{\mathbb{R}^3}\int_{\mathbb{S}^2}|(u-v)\cdot\omega|\{h_1(v')h_2(u')-h_1(v)h_2(u)\}\, d\omega\, du.
\end{equation}
Here $\omega\in \mathbb{S}^2$, and
\begin{equation}
v'=v-[(v-u)\cdot\omega]\omega,\quad u'=u+[(v-u)\cdot\omega]\omega,
\end{equation}
which denote velocities after a collision of particles having velocities $v$ and $u$ before the collision, and vice versa. Since the presence of all the physical
constants does not create essential mathematical difficulties, for notational simplicity, we have normalized all constants in the
Vlasov-Poisson-Boltzmann system to be one. Accordingly, we normalize the global Maxwellian as (with $\nabla_x\Phi\equiv 0$)
\begin{equation}
\mu(v)\equiv\mu_+(v)=\mu_-(v)= \frac{1}{(2\pi)^{3/2}}{\rm e}^{-|v|^2/2}.
\end{equation}

In this paper, it is more convenient to consider the sum and difference of $F_+$ and $F_-$. This is motivated by some previous works
\cite{BELM,J2009,W} on the other systems of binary fluids. Defining
\begin{equation}
F\equiv F_++F_-\ \hbox{ and  }\ G\equiv F_+-F_-,
\end{equation}
that is, $F$ is the total density for the two species of particles and $G$ represents the disparity between two species. Then the system \eqref{VPB0}
can be written as the equivalent form:
\begin{equation}\label{VPB1}
\begin{split}
&\partial_tF+v\cdot\nabla_xF+\nabla_x\Phi\cdot\nabla_v G=Q(F,F),
\\&\partial_tG+v\cdot\nabla_xG+\nabla_x\Phi\cdot\nabla_vF=Q(G,F),
\\&\Delta_x\Phi=\int_{\r3}G\,dv,
\end{split}
\end{equation}
with initial data $F(0,x,v)=F_{0}(x,v)$ and $G(0,x,v)=G_{0}(x,v)$. We define the standard perturbation $[f(t,x,v),g(t,x,v)]$ around the corresponding
equilibrium state $[\mu,0]$ as
\begin{equation}
F=\mu+\sqrt{\mu}f\ \hbox{ and }\ G=\sqrt{\mu}g,
\end{equation}
then the Vlasov-Poisson-Boltzmann system for the perturbation $[f,g]$ takes the form
\begin{equation}\label{VPB_per}
\begin{split}
&\partial_tf + v\cdot\nabla_xf+ \mathcal{L}_1 f=\mathfrak{N}_1:=\Gamma(f,f)+\frac{1}{2}\nabla_x\Phi\cdot vg-\nabla_x\Phi\cdot\nabla_vg ,
\\ &\partial_tg + v\cdot\nabla_xg-\nabla_x\Phi\cdot v\sqrt{\mu} + \mathcal{L}_2 g=\mathfrak{N}_2:=\Gamma(g,f)+\frac{1}{2}\nabla_x\Phi\cdot vf-\nabla_x\Phi\cdot\nabla_vf ,
\\ &\Delta_x\Phi=\int_{\r3}g\sqrt{\mu}\,dv,
\end{split}
\end{equation}
with initial data $f(0,x,v)=f_{0}(x,v)$ and $g(0,x,v)=g_{0}(x,v)$. Here $\mathfrak{N}_1$ and $\mathfrak{N}_2$ represent the nonlinear terms. The
well-known linearized collision operator $\mathcal{L}_1$ and another linearized operator $\mathcal{L}_2$ are defined as
\begin{equation}
\mathcal{L}_1h=-\frac{1}{\sqrt{\mu}}\{Q(\mu,\sqrt{\mu}h)+Q(\sqrt{\mu}h,\mu)\},\quad \mathcal{L}_2h=-\frac{1}{\sqrt{\mu}}Q(\sqrt{\mu}h,\mu),
\end{equation}
and the nonlinear collision operator (non-symmetric) is given by
\begin{equation}
\Gamma(h_1,h_2)=\frac{1}{\sqrt{\mu}}Q(\sqrt{\mu}h_1,\sqrt{\mu}h_2).
\end{equation}
Note that the linear  homogeneous system of \eqref{VPB_per} is decoupled into two independent subsystems. One is the Boltzmann equation for $f$,
and the other one is a system almost like the Vlasov-Poisson-Boltzmann for $g$ and $\nabla_x\Phi$ but with a different linearized collision operator
$\mathcal{L}_2$.

Notice that $[\mathcal{L}_1,\mathcal{L}_2]$ is equivalent to the linearized collision operator $L$ defined in \cite{G2003}. It is well-known that the
operators $\mathcal{L}_1$ and $\mathcal{L}_2$ are non-negative. For any fixed $(t,x)$, the null spaces of $\mathcal{L}_1$ and $\mathcal{L}_2$ are
given by, respectively,
\begin{equation}
\mathcal{N}(\mathcal{L}_1)={\rm span}\{\sqrt{\mu},v\sqrt{\mu},|v|^2\sqrt{\mu}\}\ \hbox{ and }\ \mathcal{N}(\mathcal{L}_2)={\rm span}\{\sqrt{\mu}\}.
\end{equation}
For any fixed $(t,x)$, we define ${\bf P_1}$ as the $L^2_v$ orthogonal projection on the null space $\mathcal{N}(\mathcal{L}_1)$. Thus for any
function $f(t,x,v)$  we can decompose
\begin{equation}
f={\bf P_1}f+\{{\bf  I-P_1}\}f,
\end{equation}
where ${\bf P_1}f$ is called the hydrodynamic part of $f$ and $\{{\bf I- P_1}\}f$ is the microscopic part. We can
further denote
\begin{equation}\label{hydrodynamic field 1}
{\bf P_1}f=\left\{a(t,x)+b(t,x)\cdot v+c(t,x)\left(\frac{|v|^2}{2}-\frac{3}{2}\right)\right\}\sqrt{\mu}.
\end{equation}
Here the hydrodynamic field of $f$, $[a(t,x),b(t,x),c(t,x)]$, represents the density, velocity and temperature fluctuations physically.
Similarly, for any fixed $(t,x)$, we define ${\bf P_2}$ as the $L^2_v$ orthogonal projection on the null space $\mathcal{N}(\mathcal{L}_2)$. Thus for
any function $g(t,x,v)$ we can decompose
\begin{equation}
g={\bf P_2}g+\{{\bf I-P_2}\}g,
\end{equation}
and we can further denote
\begin{equation}\label{hydrodynamic field 2}
{\bf P_2}g=d(t,x)\sqrt{\mu}.
\end{equation}
Here $d(t,x)$ represents the concentration difference fluctuation.

\smallskip

\noindent{\bf Notation.} In this paper, $\nabla^\ell$ with an integer $\ell\ge0$ stands for the usual any spatial derivatives of order $\ell$. When
$\ell<0$ or $\ell$ is not a positive integer, $\nabla^\ell$ stands for $\Lambda^\ell$ defined by \eqref{1Lambdas}.  We use $\dot{H}^s(\mathbb{R}^3),
s\in \mathbb{R}$ to denote the homogeneous Sobolev spaces on $\mathbb{R}^3$ with norm $\norm{\cdot}_{\dot{H}^s}$ defined by \eqref{1snorm}, and we
use $H^s(\mathbb{R}^3) $ to denote the usual Sobolev spaces with norm $\norm{\cdot}_{H^s}$ and $L^p(\mathbb{R}^3), 1\le p\le \infty$ to denote the usual
$L^p$ spaces with norm $\norm{\cdot}_{L^p}$.

We shall use $\langle\cdot,\cdot\rangle$ to denote the $L^2$ inner product in $\mathbb{R}^3_v$ with corresponding $L^2$ norm $|\cdot|_2$, while we
use $(\cdot,\cdot)$ to denote the $L^2$ inner product either in $\mathbb{R}^3_x\times\mathbb{R}^3_v$ or in $\mathbb{R}^3_x$ with $L^2$ norm
$\|\cdot\|$ without any ambiguity. We shall simply use $L^2_x$, $L^2_v$ to denote $L^2(\mathbb{R}^3_x)$ and $L^2(\mathbb{R}^3_v)$ respectively, etc.
We will use the notation $L^2_v H^s_x$ to denote the space $L^2(\mathbb{R}^3_v; H^s_x)$ with norm
\begin{equation}
\norm{h}_{L^2_v H^s_x}=\left(\int_{\mathbb{R}^3_v}\norm{f}_{H^s_x}^2\,dv\right)^{1/2},
\end{equation}
and similarly we use the notations of $L^2_v \dot{H}^s_x$, $L^2_v L^p_x$ and $L^p_xL^2_v$, etc. We define the space-velocity mixed derivatives by
$$\partial^\gamma_\beta=\partial_{x_1}^{\gamma_1}\partial_{x_2}^{\gamma_2}\partial_{x_3}^{\gamma_3}
\partial_{v_1}^{\beta_1}\partial_{v_2}^{\beta_2}\partial_{v_3}^{\beta_3}$$
where $\gamma=[\gamma_1,\gamma_2,\gamma_3]$ is related to the space derivatives, while $\beta=[\beta_1,\beta_2,\beta_3]$ is related to the velocity
derivatives.

For the Boltzmann operator \eqref{Boltzmann operator}, we define the collision frequency as
\begin{equation}
\nu(v)=\int_{\mathbb{R}^3}|v-u|\mu(u)du,
\end{equation}
which behaves like $1+|v|$. We define the weighted $L^2$ norms
\begin{equation}
|g|_\nu^2=|\nu^{1/2}g|_2^2,\quad \|g\|_\nu^2=\|\nu^{1/2}g\|^2.
\end{equation}
We denote $L^2_\nu$ by the weighted space with norm $\|\cdot\|_\nu$.

Throughout this paper, $C>0$ will denote a generic constant that can depend on the parameters coming from the problem, and the indexes $N$ and $s$
coming from the regularity on the data, but does not depend on the size of the data, etc. We refer to such constants as ``universal.'' Such constants
are allowed to change from line to line. We will employ the notation $a \lesssim b$ to mean that $a \le C b$ for a universal constant $C>0$. We also
use $C_0$ for a positive constant depending additionally on the initial data. To indicate some constants in some places so that they can be referred
to later, we will denote them in particular by $C_1,C_2$, etc.

\smallskip

As it can be obtained by formally setting the magnetic field to be zero in the two-species Vlasov-Maxwell-Boltzmann, the global existence of
classical solutions near Maxwellians to the two-species Vlasov-Poisson-Boltzmann system \eqref{VPB0} can be found in \cite{G2003,J2009} for the
spatially periodic domain and in \cite{S2006} for the whole space. For the one-species Vlasov-Poisson-Boltzmann system, the first global unique
solution near Maxwellians was constructed in \cite{G2002} for the periodic domain and in \cite{YYZ} for the whole space under the restrictions that
either the mean free path is sufficiently small or the constant background charge density is sufficiently large. Those restrictions in \cite{YYZ}
were removed in \cite{YZ}, and \cite{DY} proved the global existence of solutions for more general nonconstant background density. These proofs are
based on the nonlinear energy method developed in \cite{G2002,G2003,G2004,G2006} and in \cite{LYY2004,LY2004}. It seems that those results on the existence
of solutions can be directly generalized to the two-species Vlasov-Poisson-Boltzmann system without any additional essential difficulties.

The time decay rate of the solutions has been an important problem. It is well known in \cite{G2003} that for the periodic domain the solutions to
the Vlasov-Poisson-Boltzmann system decay at an exponential rate similarly as the Boltzmann equation \cite{U1974,S1983}. For the whole space, \cite{YZ} also obtained a time decay rate of $O(t^{-1/2})$ in
the $L^\infty_xL^2_v$-norm of the solution for the one-species Vlasov-Poisson-Boltzmann system based on the pure energy method and a time
differential inequality, and this result was generalized to the two-species system in \cite{Z}. It is noticed that the decay rates in \cite{YZ} and
\cite{Z} are not optimal. Concerning the optimal decay rates, under the additional assumption that $L^2_vL^1_x$-norm of the initial perturbation
$f_0$ is sufficiently small, by combining the linear decay results from the Fourier analysis and the nonlinear energy estimates, \cite{DS1} proved
that the $L^2$-norm of solutions to the one-species Vlasov-Poisson-Boltzmann system decay at the optimal rate of $O(t^{-1/4})$ which is slower than
the $O(t^{-3/4})$ of the Boltzmann equation. While under the additional assumption that $L^2_vL^1_x$-norm of the initial perturbation
$f_0=[f_{0,+},f_{0,-}]$ and the $L^1_x$-norm of $\nabla_x\Phi_0$ are sufficiently small, combining the construction of the compensating functions
for two-species system by using Kawashima's method \cite{K1990} and the nonlinear energy estimates, \cite{YY} proved that the $L^2$-norm of solutions to the two-species
Vlasov-Poisson-Boltzmann system decay at the optimal rate of $O(t^{-3/4})$ which is same as that of Boltzmann equation \cite{NI1976,UY2006}. One may also find from
\cite{DS2} that the $L^2_vL^r_x$-norm with $2\le r\le \infty$ of solutions to the two-species Vlasov-Poisson-Boltzmann system decay at the optimal
rate of $O(t^{-3/2(1-1/r)})$. We remark that if given the same condition on the initial data (the difference between \cite{DS1} and \cite{YY,DS2} is
whether one imposes the condition that $\norm{\nabla_x\Phi}_{L^1_x}$ is small, which is equivalent to that $\norm{\nabla_x\Delta^{-1}\langle
f_0,\sqrt{\mu}\rangle}_{L^1_x}$ is small), \cite{DS1} and \cite{YY,DS2} should get the same optimal decay rate of the solutions. It is observed that
the electric field is coupled with $f$ through the Poisson equation, so one may obtain the algebraic decay rate of $\nabla_x\Phi$ from that of $f$ in
\cite{DS1,DS2,YY}. One may argue by contradiction that from the structure of the problem, the decay rate of electric field of the one-species
system is optimal. However, we will show in this paper that for the two-species system the electric field decays at an exponential rate. This is
totally due to the special cancelation property of the two-species system.

It is difficult to show that the $L^{p}$ norm of the solution can be
preserved along time evolution in the $L^{p}$--$L^{2}$ approach \cite{DS1,DS2,YY,NI1976,UY2006}. On the other
hand, the existing pure energy method \cite{YZ,Z} of proving the decay
rate does not lead to the optimal decay rate of the solution. Motivated by \cite{GT}, using a negative Sobolev space $\dot{H}^{-s}$ ($s\ge 0$) to replace $L^{p}$
norm, in \cite{GW} we developed a general energy method of using a family of scaled energy
estimates with minimum derivative counts and interpolations among them to prove the optimal decay rate of the dissipative equations in the whole space. The method was applied to classical
examples such as the heat equation, the compressible Navier-Stokes equations and the Boltzmann
equation.  The main purpose of this paper is to applying this energy method to prove the $L^2$ optimal decay rate of the solution to the two-species
Vlasov-Poisson-Boltzmann system \eqref{VPB0}, equivalently, the system \eqref{VPB_per} for $[f,g]$.

Now we define the equivalent instant energy functional $\mathcal{E}_N$ by
\begin{equation}
\mathcal{E}_N\sim\sum_{|\gamma|+|\beta|\le N}\norm{\left[\partial^\gamma_\beta f,\partial^\gamma_\beta g\right]}^2+\sum_{|\gamma|\le
N+1}\norm{\partial^\gamma\nabla_x\Phi}^2
\end{equation}
and the corresponding dissipation rate by
 \begin{equation}
\begin{split}
\mathcal{D}_N&=\sum_{|\gamma|+|\beta|\le N}\norm{[\partial^\gamma_\beta\{{\bf I-P_1 }\}f,\partial^\gamma_\beta\{{\bf I-P_2 }\}g]}_\nu^2
 \\&\quad+\sum_{1\le |\gamma|\le N}
\|\partial^\gamma {\bf P_1 }f \|^2+\sum_{ |\gamma|\le N} \norm{\partial^\gamma {\bf P_2 }g}^2 +\sum_{|\gamma|\le N+1} \|\partial^\gamma \nabla_x\Phi
\|^2.
\end{split}
\end{equation}
We remark that the definitions of $\mathcal{E}_N$ and $\mathcal{D}_N$ are carefully designed to capture the structure of the system \eqref{VPB_per}.
Notice that only $\norm{{\bf P_1 }f}^2$ in $\mathcal{E}_N$ is not bounded by the dissipation $\mathcal{D}_N$. We assume that $N$ is a sufficiently
large integer. Our first main result can be stated as follows.

\begin{theorem}\label{theorem1}
There exists an instant energy functional $\mathcal{E}_N(t)$ such that if $\mathcal{E}_N(0)$ is sufficiently small, then the Vlasov-Poisson-Boltzmann
system \eqref{VPB_per} admits a unique global solution $[f(t,x,v),g(t,x,v)]$ satisfying that for all $t\ge0$,
\begin{equation}\label{energy es}
\mathcal{E}_N(t)+\int_0^t\mathcal{D}_N(\tau)\,d\tau\le \mathcal{E}_N(0).
\end{equation}
If further, $f_0\in L^2_v\dot{H}^{-s}_x$ for some $s\in [0,3/2)$, then for all $t\ge 0$,
\begin{equation}\label{H-sbound}
\norm{\Lambda^{-s}f(t)}\le C_0,
\end{equation}
and the following decay results hold:
\begin{equation}\label{decay00}
\mathcal{E}_N(t)\le C_0(1+t)^{-s}
\end{equation}
and
\begin{equation}\label{decay0}
\sum_{1\le k \le N}\norm{\left[\nabla^k f(t),\nabla^k g(t)\right]}^2+\sum_{1\le k \le N+1}\norm{\nabla^k \nabla_x\Phi(t)}^2\le C_0(1+t)^{-(1+s)}.
\end{equation}
\end{theorem}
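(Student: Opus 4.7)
The plan is to adapt the general energy framework of \cite{GW} to the two-species Vlasov--Poisson--Boltzmann system, proceeding in three stages.

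\emph{Stage 1: closed energy inequality.} For each $|\gamma|+|\beta|\le N$ I would apply $\partial^\gamma_\beta$ to both equations of \eqref{VPB_per} and pair with $[\partial^\gamma_\beta f,\partial^\gamma_\beta g]$. The coercivity of $\mathcal{L}_1,\mathcal{L}_2$ on the orthogonal complements of $\mathcal{N}(\mathcal{L}_1),\mathcal{N}(\mathcal{L}_2)$ supplies the $\nu$-weighted microscopic dissipation $\|\partial^\gamma_\beta\{\mathbf{I-P_1}\}f\|_\nu^2+\|\partial^\gamma_\beta\{\mathbf{I-P_2}\}g\|_\nu^2$. Integrating the linear term $-\nabla_x\Phi\cdot v\sqrt\mu$ of the $g$-equation against $\partial^\gamma g$, together with the Poisson relation $\Delta_x\Phi=\int g\sqrt\mu\,dv$, produces $\|\partial^\gamma\nabla_x\Phi\|^2$ as dissipation for each $|\gamma|\le N+1$; this is where the two-species cancelation is essential, since the $[F,G]$ reformulation isolates the electric forcing inside the $g$-equation and turns $[g,\nabla_x\Phi]$ into a closed Kawashima-type dissipative subsystem. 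The hydrodynamic dissipations $\|\partial^\gamma\mathbf{P_1}f\|^2$ for $1\le|\gamma|\le N$ and $\|\partial^\gamma\mathbf{P_2}g\|^2$ for $0\le|\gamma|\le N$ are recovered by Guo's local-conservation-law method: derive moment equations for $[a,b,c],d$ and pair $\partial^\gamma$ of the microscopic equation with suitable test functions built from these fields. Velocity derivatives are handled inductively on $|\beta|$. Nonlinear terms from $\Gamma$ and the quadratic electric coupling are bounded by $\sqrt{\mathcal{E}_N}\mathcal{D}_N$ via Sobolev embedding in $x$ and the Gaussian $v$-weight. A suitably weighted combination of the resulting inequalities, together with local existence and a continuity argument, yields \eqref{energy es}.

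\emph{Stage 2: negative Sobolev bound.} I would apply $\Lambda^{-s}$ to the $f$-equation and pair with $\Lambda^{-s}f$. The linear part gives $\tfrac12\tfrac{d}{dt}\|\Lambda^{-s}f\|^2+c\|\Lambda^{-s}\{\mathbf{I-P_1}\}f\|_\nu^2$; the nonlinear terms have the form $\Lambda^{-s}(h_1h_2)$ and are controlled by the Riesz-potential estimate
\[
\|\Lambda^{-s}(h_1h_2)\|_{L^2_x}\lesssim \|h_1\|_{L^{p_1}}\|h_2\|_{L^{p_2}},\qquad \tfrac{1}{p_1}+\tfrac{1}{p_2}=\tfrac{1}{2}+\tfrac{s}{3},
\]
a consequence of Hardy--Littlewood--Sobolev that forces $s\in[0,3/2)$. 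Distributing the spatial norms so that Sobolev embedding feeds the right-hand side back into $\mathcal{E}_N,\mathcal{D}_N$, and then integrating in time using $\int_0^\infty\mathcal{D}_N(\tau)\,d\tau\le\mathcal{E}_N(0)$ from \eqref{energy es}, I obtain \eqref{H-sbound}. I would work with $f$ alone, since the linear forcing $-\nabla_x\Phi\cdot v\sqrt\mu$ in the $g$-equation does not pair cleanly in $\dot H^{-s}$, and only a negative-index bound for $f$ is required.

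\emph{Stage 3: algebraic decay rates.} The only ingredient of $\mathcal{E}_N$ not already dominated by $\mathcal{D}_N$ is the zeroth-order hydrodynamic piece $\|\mathbf{P_1}f\|^2$. The Gagliardo--Nirenberg interpolation in negative spaces
\[
\|\mathbf{P_1}f\|^2\lesssim \|\Lambda^{-s}\mathbf{P_1}f\|^{\frac{2}{s+1}}\,\|\nabla\mathbf{P_1}f\|^{\frac{2s}{s+1}},
\]
combined with \eqref{H-sbound} and $\|\nabla\mathbf{P_1}f\|^2\lesssim\mathcal{D}_N$, yields $\mathcal{E}_N\lesssim C_0\,\mathcal{D}_N^{s/(s+1)}$. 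Inserting this into $\tfrac{d}{dt}\mathcal{E}_N+c\mathcal{D}_N\le 0$ produces the ODI $\tfrac{d}{dt}\mathcal{E}_N+c_0C_0^{-(s+1)/s}\mathcal{E}_N^{(s+1)/s}\le 0$, which integrates to \eqref{decay00}. For the sharper rate \eqref{decay0} I would rerun Stage 1 restricted to $|\gamma|\ge 1$, obtaining $\tfrac{d}{dt}\mathcal{E}_N^h+c\mathcal{D}_N^h\le 0$, and then apply the analogous interpolation
\[
\|\nabla^k u\|^2\lesssim \|\Lambda^{-s}u\|^{\frac{2}{k+1+s}}\|\nabla^{k+1}u\|^{\frac{2(k+s)}{k+1+s}},\qquad k\ge 1,
\]
which, together with \eqref{H-sbound}, gives $\mathcal{E}_N^h\lesssim(\mathcal{D}_N^h)^{(1+s)/(2+s)}$ and hence the improved rate $(1+t)^{-(1+s)}$.

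The main technical obstacle I foresee is Stage 1, specifically engineering a single functional $\mathcal{E}_N$ whose time derivative produces as dissipation both the full $\|\nabla_x\Phi\|_{H^{N+1}}^2$ and all spatial derivatives of $\mathbf{P_2}g$ starting from order zero. Without the $[F,G]$ reformulation this dissipation is hidden behind the cancelation between the two species; after reformulation it becomes visible and is precisely what drives the exponential (rather than algebraic) decay of $g$ and $\nabla_x\Phi$ emphasized in the abstract, in contrast with the one-species case.
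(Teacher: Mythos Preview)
Your overall architecture matches the paper's, but two points need correction, one minor and one that constitutes a genuine gap.

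\textbf{Minor (Stage 1).} The pairing of the linear term $-\nabla_x\Phi\cdot v\sqrt{\mu}$ with $\partial^\gamma g$ does \emph{not} produce $\|\partial^\gamma\nabla_x\Phi\|^2$ as dissipation. Using $\eqref{local conservation laws}_4$ and the Poisson equation one finds
\[
-(\partial^\gamma\nabla_x\Phi\cdot v\sqrt{\mu},\,\partial^\gamma g)
=-(\partial^\gamma\Phi,\,\partial_t\partial^\gamma\Delta_x\Phi)
=\tfrac12\tfrac{d}{dt}\|\partial^\gamma\nabla_x\Phi\|^2,
\]
so this term contributes to the \emph{energy}, not the dissipation. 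The dissipation of $\nabla_x\Phi$ (all the way down to order zero) and of $\mathbf{P_2}g$ comes instead from the interactive functional $G_g^k$ built out of the moment equation $\partial_t\mathcal{D}_i(\{\mathbf{I-P_2}\}g)+\partial_i d-\partial_i\Phi=\cdots$ together with Poisson; this is where the two-species cancelation is actually exploited. Since you already plan to run Guo's macroscopic machinery, the fix is local, but your attribution of the mechanism is inverted.

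\textbf{Genuine gap (Stage 2).} Your treatment of the negative Sobolev estimate works only for $s\in[0,1/2]$. After HLS and the splitting $f=\mathbf{P_1}f+\{\mathbf{I-P_1}\}f$, the nonlinear term $\Gamma(f,f)$ leaves you with a contribution controlled by $\|f\|_{L^2_vL^{3/s}_x}\|f\|$. For $s\le 1/2$ one has $3/s\ge 6$, so $\|f\|_{L^{3/s}_x}$ costs at least one spatial derivative and the product is bounded by $\delta\cdot\sqrt{\mathcal{D}_N}$. For $s\in(1/2,3/2)$, however, $2<3/s<6$ and interpolation gives only
\[
\|f\|_{L^2_vL^{3/s}_x}\|f\|\lesssim \|f\|^{\,s+1/2}\|\nabla f\|^{\,3/2-s},
\]
so that the $\Lambda^{-s}$ energy inequality carries the extra term $\|f\|^{2s+1}\|\nabla f\|^{3-2s}$ on the right. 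This is \emph{not} integrable in time via $\int_0^\infty\mathcal{D}_N<\infty$: bounding $\|f\|$ by $\sqrt{\mathcal{E}_N}\le\sqrt{\delta}$ leaves $\int_0^t\mathcal{D}_N^{(3-2s)/2}\,d\tau$, and since $(3-2s)/2<1$ this grows like $t^{(2s-1)/2}$ by H\"older. The paper closes this by a bootstrap: first run the whole argument at $s=1/2$ (which lies in the good range) to obtain $\|f(\tau)\|^2\lesssim(1+\tau)^{-1/2}$ and $\|\nabla f(\tau)\|^2\lesssim(1+\tau)^{-3/2}$; then for $s\in(1/2,3/2)$ the extra term satisfies $\|f\|^{2s+1}\|\nabla f\|^{3-2s}\lesssim(1+\tau)^{-(5/2-s)}$, which is integrable since $5/2-s>1$. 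Without this two-step split your Stage~2 does not close for $s>1/2$, and consequently neither do \eqref{H-sbound}--\eqref{decay0} in that range.
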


\begin{Remark}
We remark that by the Poisson equation $\eqref{VPB_per}_3$, we have
\begin{equation}
\mathcal{E}_N\sim\sum_{|\gamma|+|\beta|\le N}\norm{\left[\partial^\gamma_\beta f,\partial^\gamma_\beta g\right]}^2+ \norm{ \nabla_x\Delta^{-1}{\bf P_2}g}^2.
\end{equation}
\end{Remark}
\begin{Remark}
Notice that we do not need to assume that the $L^2_v\dot{H}^{-s}_x$ norm of $f_0$ is small. This norm is preserved along time evolution and enhances
the decay rate of the solution with the factor $s/2$. The constraint $s<3/2$ comes from applying Lemma \ref{1Riesz} to estimate the nonlinear terms
when doing the negative Sobolev estimates via $\Lambda^{-s}$. For $s\ge 3/2$, the nonlinear estimates would not work. We remark that we do not
require the $L^2_v\dot{H}^{-s}_x$ norm of $g_0$.
\end{Remark}

As we metioned before that the linear homogeneous system of \eqref{VPB_per} is decoupled into the Boltzmann equation for $f$ and another system for
$g$ and $\nabla_x\Phi$ looks like the one-species Vlasov-Poisson-Boltzmann but with the linearized collision operator $\mathcal{L}_2$. However,
indifferent from the Boltzmann equation, the Vlasov-Boltzmann equation and the one-species Vlasov-Poisson-Boltzmann system, this special coupling
effect between the Poisson equation and the linear operator $\mathcal{L}_2$ gives the dissipation estimate of not only the microscopic part $\{{\bf
I-P_2}\}g$ but also the hydrodynamic part ${\bf P_2}g$ (it only has one hydrodynamic field, namely, $d(t,x)$). This is also what we accorded to when we
define the dissipation $\mathcal{D}_N$. Then it would suggest that $f$ will decay at the optimal algebraic rate as the Boltzmann equation but $g$
(and $\nabla_x\Phi$) may decay at the exponential rate. These facts can be verified easily for the linear homogeneous system of \eqref{VPB_per}
along our proof of Theorem \ref{theorem1} or by the Fourier analysis as in \cite{DS1,DS2}. However, for the nonlinear problem \eqref{VPB_per}, to
control the nonlinear terms we need to impose a bit stronger assumption for the initial data. This is our second main result which can be stated as
follows.

\begin{theorem}\label{theorem2}
Under the assumptions of Theorem \ref{theorem1}, if additionally $\mathcal{E}_N(0)+\norm{[f_0,g_0]}_\nu^2$ is sufficiently small, then for all $t\ge
0$,
\begin{equation}\label{decay21}
\sum_{2\le k \le N}\norm{\left[\nabla^k f(t),\nabla^k g(t)\right]}^2+\sum_{2\le k \le N+1}\norm{\nabla^k \nabla_x\Phi(t)}^2\le C_0(1+t)^{-(2+s)};
\end{equation}
if additionally $\mathcal{E}_N(0)+\sum_{0\le k\le N-1}\norm{[\nabla^kf_0,\nabla^kg_0]}_\nu^2$ is sufficiently small, then for all $t\ge 0$,
\begin{eqnarray}\label{decay22}
&&\sum_{0\le k \le N-1}\norm{\nabla^k g(t)}^2+\sum_{0\le k \le N}\norm{\nabla^k \nabla_x\Phi(t)}^2\le C_0e^{-\lambda t}\ \text{ for some }\lambda>0,
\\&&\label{decay231} \norm{\nabla^N
g(t)}^2+\norm{\nabla^{N+1} \nabla_x\Phi(t)}^2\le C_0(1+t)^{-(N-1+s)},
\end{eqnarray}
and
\begin{eqnarray}\label{decay23}
&&\sum_{\ell\le k \le N}\norm{\nabla^k f(t)}^2\le C_0(1+t)^{-(\ell+s)}\ \text{ for }\ \ell=3,\dots,N-1,
\\&&\label{decay24}
\norm{\nabla^k \{{\bf I-P_1 }\} f(t)}^2\le C_0(1+t)^{-(k+1+s)}\, \hbox{ for }k=0,\dots,N-2.
\end{eqnarray}
\end{theorem}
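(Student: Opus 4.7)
My plan is to refine the energy method of Theorem \ref{theorem1} by exploiting a structural dichotomy between the two subsystems making up the linear part of \eqref{VPB_per}. The $f$-equation is Boltzmann-type: its dissipation in $\mathcal{D}_N$ fails to control ${\bf P_1}f$ itself, which is what forces the purely algebraic rate of Theorem \ref{theorem1}. By contrast, the $g$--$\nabla_x\Phi$ subsystem is fully dissipative: $\mathcal{D}_N$ controls ${\bf P_2}g$ with no loss of derivative and, through the Poisson relation, also controls $\nabla_x\Phi$. The crucial observation, which I would verify term by term, is that every contribution to $\mathfrak{N}_2=\Gamma(g,f)+\tfrac{1}{2}\nabla_x\Phi\cdot vf-\nabla_x\Phi\cdot\nabla_v f$ carries at least one factor that is either $g$ itself or $\nabla_x\Phi$; this is the cancelation property advertised in the abstract and is the mechanism producing the exponential decay of $g$ and $\nabla_x\Phi$.

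To establish \eqref{decay22} I would, for each $0\le k\le N-1$, introduce a partial energy
\[
\mathcal{E}_g^k\sim\sum_{|\alpha|\le k}\|\partial^\alpha g\|^2+\sum_{|\alpha|\le k+1}\|\partial^\alpha\nabla_x\Phi\|^2
\]
and a corresponding dissipation $\mathcal{D}_g^k$ which, thanks to the full dissipativity, coercively dominates $\mathcal{E}_g^k$. Testing the $g$-equation against $\partial^\alpha g$ and handling $v$-derivatives as in Theorem \ref{theorem1} should yield
\[
\frac{d}{dt}\mathcal{E}_g^k+\lambda\mathcal{D}_g^k\le C\sqrt{\mathcal{E}_N(t)+\|[f,g]\|_\nu^2}\,\bigl(\mathcal{E}_g^k+\mathcal{D}_N\bigr),
\]
where the $\nu$-weighted initial smallness is precisely what is required to bound $\Gamma(g,f)$ and $\nabla_x\Phi\cdot\nabla_v f$ by an $L^2_\nu$-factor on $f$ paired with a factor of $g$, and it propagates by a standard $\nu$-weighted estimate coupled with \eqref{energy es}. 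Gronwall then gives the exponential bound \eqref{decay22}. At the top level $k=N$ the dissipation no longer dominates $\nabla^{N+1}g$, so I would instead interpolate $\|\nabla^N g\|$ between the exponentially small $\|g\|$ and the uniformly bounded $\|\nabla^{N+1}g\|$ from \eqref{energy es}, producing the algebraic rate \eqref{decay231}; the bound on $\|\nabla^{N+1}\nabla_x\Phi\|$ follows by Poisson inversion.

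For the enhanced algebraic decays \eqref{decay21}, \eqref{decay23}, \eqref{decay24}, I would apply the scaled energy framework of \cite{GW}. Using the preserved bound on $\|\Lambda^{-s}f\|$ from \eqref{H-sbound}, I would at each level $\ell$ derive $\frac{d}{dt}\mathcal{E}_N^\ell+c\,\mathcal{D}_N^\ell\le 0$, where $\mathcal{E}_N^\ell$ and $\mathcal{D}_N^\ell$ collect the $\nabla^k$-contributions for $\ell\le k\le N$, and then invoke the Sobolev interpolation
\[
\|\nabla^\ell f\|\lesssim\|\Lambda^{-s}f\|^{1/(\ell+s+1)}\|\nabla^{\ell+1}f\|^{(\ell+s)/(\ell+s+1)}
\]
to convert it into a closed inequality $\frac{d}{dt}\mathcal{E}_N^\ell+c(\mathcal{E}_N^\ell)^{1+1/(\ell+s)}\le 0$, which integrates to the rate $(1+t)^{-(\ell+s)}$. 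For \eqref{decay24} the extra factor $(1+t)^{-1}$ exploits that $\{{\bf I-P_1}\}f$ is already controlled by $\mathcal{D}_N$ with no derivative loss; \eqref{decay21} is essentially the $\ell=2$ instance, which is why the weaker smallness $\mathcal{E}_N(0)+\|[f_0,g_0]\|_\nu^2$ already suffices.

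The principal obstacle is the careful book-keeping of the $g$-factor in $\mathfrak{N}_2$: the term $\Gamma(g,f)$ pairs a decaying $g$ against an $f$ that is only uniformly bounded, so a sharp $\nu$-weighted multiplier estimate is essential, and it is precisely here that the $\nu$-weighted initial smallness must be spent. A secondary difficulty is matching the exponent $N-1+s$ in \eqref{decay231}: the interpolation between the exponentially-small $\|g\|$ and the $H^{N+1}$-bound must be balanced exactly to yield that rate, and it closes only under the inherited constraint $s<3/2$.
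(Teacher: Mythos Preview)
Your overall strategy matches the paper's: exploit the full dissipativity of the $g$--$\nabla_x\Phi$ subsystem, propagate $\nu$-weighted smallness, and run the scaled energy method of \cite{GW} at higher levels $\ell$. But two of your execution steps do not close as written.

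First, your differential inequality for $\mathcal{E}_g^k$ carries $\mathcal{D}_N$ on the right-hand side. After Gronwall this produces a term $\displaystyle\int_0^t e^{-\lambda(t-\tau)}\mathcal{D}_N(\tau)\,d\tau$, and since $\mathcal{D}_N$ is only known to be time-integrable (and from Theorem~\ref{theorem1} to decay algebraically), this does not yield exponential decay of $\mathcal{E}_g^k$. The paper's Lemma~\ref{lemma micro 33} avoids this: every factor of $f$ in $\mathfrak{N}_2$ is estimated in $L^\infty_x L^2_\nu$ using the propagated smallness of $\sum_{0\le k\le N-1}\|\nabla^k f\|_\nu$ (not just $\|f\|_\nu$), so that the nonlinear terms are bounded by $\delta\sum_{k\le N-1}\|\nabla^k g\|_\nu^2$ and are absorbed by the $g$-dissipation itself. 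No $f$-dissipation appears on the right, and one obtains a clean $\frac{d}{dt}\mathcal{E}_g+\lambda\mathcal{E}_g\le 0$. This is precisely why the stronger hypothesis $\sum_{0\le k\le N-1}\|[\nabla^k f_0,\nabla^k g_0]\|_\nu^2$ small is needed for \eqref{decay22}.

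Second, your route to \eqref{decay231} via interpolation of $\|\nabla^N g\|$ between $\|g\|$ and $\|\nabla^{N+1}g\|$ fails because $\|\nabla^{N+1}g\|$ is not controlled by $\mathcal{E}_N$: the energy carries only $N$ spatial derivatives of $g$. The paper obtains \eqref{decay231} differently, as a byproduct of the scaled energy inequality at level $\ell=N-1$. After adding $\mathcal{E}_g$ to $\widetilde{\mathcal{E}}_\ell$ (which also absorbs the residual $\sum_{2\le k}\|\nabla^k\nabla_x\Phi\|^2$ term that your sketch for \eqref{decay23} leaves unaddressed), the combined functional $\widetilde{\mathcal{E}}_{N-1}+\mathcal{E}_g$ already contains $\|\nabla^N g\|^2+\|\nabla^{N+1}\nabla_x\Phi\|^2$, and the standard $(1+t)^{-(N-1+s)}$ rate follows from the same Lyapunov argument. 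The constraint $s<3/2$ plays no role here; it enters only in the negative Sobolev estimates of Lemma~\ref{lemma H-s}.
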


We will prove Theorem \ref{theorem1} and Theorem \ref{theorem2} by the energy method that we recently developed in \cite{GW}. As there, we may use the linear heat equation to illustrate the main idea of this approach in advance. Let $u(t)$ be the solution to the heat equation
\begin{equation}\label{heat equation}
\left\{\begin{array}{lll}
\partial_tu -\Delta u=0\ \text{ in }\mathbb{R}^3
\\u|_{t=0}=u_0,
\end{array}\right.
\end{equation}
Let $-s\le \ell\le N$. The standard energy identity of \eqref{heat equation} is
\begin{equation}\label{heat 1}
\frac{1}{2}\frac{d}{dt}\norm{\nabla^\ell u}_{L^2}^2+\norm{\nabla^{\ell+1} u}_{L^2}^2=0.
\end{equation}
Integrating the above in time, we obtain
\begin{equation}\label{heat 2}
\norm{\nabla^\ell u(t)}_{L^2}^2\le \norm{\nabla^\ell u_0}_{L^2}^2.
\end{equation}
Note that the energy in \eqref{heat 1} is not bounded by the corresponding dissipation. But the crucial observation is that by the Sobolev interpolation the dissipation still can give some control on the energy: for $-s<\ell\le N$, by Lemma \ref{1-sinte}, we interpolate
to get
\begin{equation}\label{heat 3}
\norm{\nabla^\ell u(t)}_{L^2} \le \norm{\Lambda^{-s} u(t)}_{L^2}^\frac{1}{\ell+1+s}\norm{\nabla^{\ell+1} u(t)}_{L^2}^\frac{\ell+s}{\ell+1+s}.
\end{equation}
Combining \eqref{heat 3} and \eqref{heat 2} (with $\ell=-s$), we obtain
\begin{equation}\label{heat 4}
\norm{\nabla^{\ell+1} u(t)}_{L^2}\ge \norm{\Lambda^{-s} u_0}_{L^2}^{-\frac{1}{\ell+s}}\norm{\nabla^\ell u(t)}_{L^2}^{1+\frac{1}{\ell+s}}.
\end{equation}
Plugging \eqref{heat 4} into \eqref{heat 1}, we deduce that there exists a constant $C_0>0$ such
that
\begin{equation}\label{heat 5}
\frac{d}{dt}\norm{\nabla^\ell u}_{L^2}^2+C_0\left(\norm{\nabla^\ell u}_{L^2}^2\right)^{1+\frac{1}{\ell+s}}\le 0.
\end{equation}
Solving this inequality directly, and by \eqref{heat 2}, we obtain the following decay result:
\begin{equation}\label{heat 555}
\norm{\nabla^\ell u(t)}_{L^{2}}^{2}\leq \left( \norm{\nabla^\ell u_0}%
_{L^{2}}^{-\frac{2}{\ell +s}}+\frac{C_{0}t}{\ell +s}\right) ^{-(\ell
+s)}\leq C_{0}(1+t)^{-(\ell +s)}.
\end{equation}
Hence, we conclude our decay results by the pure energy method. Indifferent from the $L^p$--$L^2$ approach, an important feature here is that the $\dot{H}^{-s}$ norm of the solution is preserved along time evolution and this norm of initial data enhances the decay rate of the solution with the factor $s/2$. Although \eqref{heat 555} can be proved by the Fourier analysis or
spectral method, the same strategy in our proof can be applied to nonlinear
system with two essential points in the proof: (1) closing the energy
estimates at each $\ell $-th level (referring to the order of the spatial
derivatives of the solution); (2) deriving a novel negative Sobolev
estimates for nonlinear system which requires $s<3/2$ ($n/2$ for dimension $n$).

In the rest of this paper, except that we will collect in Appendix the analytic tools which will be used, we will apply the energy method illustrated above to prove Theorem \ref{theorem1} and Theorem \ref{theorem2} in Section \ref{sec 2} and Section \ref{sec 3} respetively. However, we will be
not able to close the energy estimates at each $\ell $-th level as the
heat equation. This is caused by the ``degenerate" dissipative structure of the linear homogenous system of \eqref{VPB_per} when using our energy method. More precisely, the linear energy identity of the problem reads as
\begin{equation}\label{energy identity boltzmann}
\frac{1}{2}\frac{d}{dt}\left(\norm{\left[\nabla^\ell f,\nabla^\ell g\right]}^2+\norm{\nabla^\ell\nabla_x\Phi}^2\right)+(\mathcal{L}_1\nabla^\ell f,\nabla^\ell f)+(\mathcal{L}_2\nabla^\ell g,\nabla^\ell g)=0.
\end{equation}
It is well-known that $\mathcal{L}_1$ and $\mathcal{L}_2$ are only positive with respect to the microscopic part respectively, that is, there exists a $\sigma_0>0$ such that
\begin{equation}\label{coercive boltzmann}
(\mathcal{L}_1\nabla^\ell f,\nabla^\ell f)\ge \sigma_0 \norm{\nabla^\ell\{{\bf I-P_1}\}f}_\nu^2\ \text{ and }\ (\mathcal{L}_2\nabla^\ell g,\nabla^\ell g)\ge \sigma_0 \norm{\nabla^\ell\{{\bf I-P_2}\}g}_\nu^2.
\end{equation}
To rediscover the dissipative estimate for the hydrodynamic part
$\left[{\bf P_1}f,{\bf P_2}g\right]$ and the electric field $\nabla_x\Phi$, we will use the linearized equation of \eqref{VPB_per} via constructing the interactive energy functional $G_\ell$ between $\nabla^\ell $ and $\nabla^{\ell+1} $ of the solution to deduce
\begin{equation}\label{3232}
\begin{split}
&\frac{dG_\ell}{dt}+\norm{\nabla^{\ell+1} \mathbf{P_1}f }_{L^2}^2+\norm{\nabla^{\ell} \mathbf{P_2}g }_{L^2}^2+\norm{\nabla^{\ell+1} \mathbf{P_2}g }_{L^2}^2+\norm{\left[\nabla^{\ell} \nabla_x\Phi, \nabla^{\ell+1} \nabla_x\Phi,\nabla^{\ell+2} \nabla_x\Phi\right]}_{L^2}^2
\\&\quad\lesssim  \norm{\left[\nabla^{\ell}\{
\mathbf{I-P_1}\} f,\nabla^{\ell}\{
\mathbf{I-P_2}\} g  \right]}_{L^2}^2+\norm{\left[\nabla^{\ell+1}\{
\mathbf{I-P_1}\} f,\nabla^{\ell+1}\{
\mathbf{I-P_2}\} g  \right]}_{L^2}^2.
\end{split}
\end{equation}
This implies that to get the dissipative estimate at each level for the missing part in the standard energy inequality it requires us to do the energy estimates \eqref{energy identity boltzmann} at two levels (referring to the order of the spatial derivatives of the solution). To get around this obstacle, the idea is to construct some energy functionals $\widetilde{\mathcal{E}}_{\ell }(t)$, $0\leq \ell \leq N-1$ (less than $N-1$ is restricted by \eqref{3232}),
\begin{equation}
\widetilde{\mathcal{E}}_{\ell }(t)\backsim \sum_{\ell \leq k\leq N}\norm{\nabla^k f(t)}_{L^2}%
^{2},  \notag
\end{equation}%
which has a \textit{minimum} derivative count of \ $\ell ,$ and we will derive some Lyapunov-type inequalities for these energy functionals in which the corresponding dissipation (denoted by $\widetilde{\mathcal{D}}_\ell(t)$) can be related to the energy similarly as \eqref{heat 4} by the Sobolev interpolation. This can be easily established for the linear homogeneous problem along our analysis, however, for the nonlinear problem \eqref{VPB_per}, it is much more complicated
due to the nonlinear estimates. This is the second point of this paper as in \cite{GW} that we will use extensively the Sobolev interpolation of the the Gagliardo-Nirenberg inequality between high-order and low-order spatial derivatives to expect to bound the nonlinear terms by $\sqrt{\mathcal{E}_N(t)}\widetilde{\mathcal{D}}_\ell(t)$ that can be absorbed. But this can not be achieved well at this moment and we will be left with two extra terms: one term is related to a sum of norms of $\nabla_x\Phi$ and the other term is related to a sum of velocity-weighted norms of $[f,g]$, as stated in \eqref{energy estimate 4}. Note that when taking $\ell=0,1$ in \eqref{energy estimate 4}, we can absorb these two unpleasant terms and then we get Theorem \ref{theorem1} after we complete the negative Sobolev estimates. While for $\ell\ge 2$, we need to assume the weighted norm of the initial data. With the help of these weighted norms, we will succeed in removing the sum of velocity-weighted norms of $[f,g]$ from the right hand side of \eqref{energy estimate 4} to get \eqref{energy estimate 4'} in which we can take $\ell=2$. On the other hand, we can revisit the equations $\eqref{VPB_per}_2$--$\eqref{VPB_per}_3$ to deduce a further energy estimates \eqref{micro g} for $g$ and $\nabla_x\Phi$ which kills the sum of norms of $\nabla_x\Phi$ in the right hand side of \eqref{energy estimate 4'} to get \eqref{energy estimate 411'} in which we can take $\ell=3,\dots,N-1$. This energy estimates \eqref{micro g} also implies the exponential decay of $g$ and $\nabla_x\Phi$. Finally, to estimate the negative Sobolev norm in Lemma \ref{lemma H-s}, we need to restrict that $s<3/2$ when estimating $\Lambda^{-s}$ acting on the nonlinear terms by using the Hardy-Littlewood-Sobolev inequality and also we need to separate the cases that $s\in[0,1/2]$ and $s\in(1/2,3/2)$. We remark that it is also
important that we use the Minkowski's integral inequality to interchange the
order of integrations in $v$ and $x$ in order to estimate the nonlinear
terms and that we extensively use the splitting $f=\mathbf{P_1}f+{\{\mathbf{I-P_1
}}\}f$ and $g=\mathbf{P_2}g+{\{\mathbf{I-P_2
}}\}g$. Once these estimates are
obtained, Theorem \ref{theorem1} and Theorem \ref{theorem2} follow by the interpolation between negative and positive Sobolev norms
similarly as the heat equation case.

To end this introduction, we want to emphasize that the results of the two-species Vlasov-Poisson-Boltzmann system in Theorem \ref{theorem1} and \ref{theorem2} reveal the essential difference when compared to the one-species Vlasov-Poisson-Boltzmann system or the two-species Vlasov-Boltzmann system. For the one-species Vlasov-Poisson-Boltzmann system \cite{DS1}, the electric field decays at the optimal algebraic rate; For the two-species Vlasov-Boltzmann system \cite{W} the disparity between two species of particles decays at the optimal algebraic rate as the Boltzmann equation. Besides this, our achievement heavily relies on the special cancelation property between two species that our reformulation \eqref{VPB_per} displays well which gives the dissipative estimates of the $L^2$ norm of electric field. This cancelation phenomenon was also observed in \cite{S2006} for the study of the two-species Vlasov-Maxwell-Boltzmann system. The natural generalization of this paper is to using our energy method to revisit the decay rate of the two-species Vlasov-Maxwell-Boltzmann system \cite{W2}. Another interesting application is to revisit the one-species Vlasov-Poisson-Boltzmann system system in which the $L^2$ norm of electric field is not included in the dissipation, and we expect to investigate it in the future study.

\section{Energy estimates and proof of Theorem \ref{theorem1}}\label{sec 2}

\subsection{Basic energy estimates}\label{sec 2.1}
In this subsection,  we will derive the a priori nonlinear energy estimates
for the system \eqref{VPB_per}.
We shall first establish the energy estimates on the pure spatial derivatives of solutions. First of all, notice that the dissipation estimate in
\eqref{coercive boltzmann} is degenerate, and it only controls the microscopic part $\left[\{{\bf I- P_1}\}f,\{{\bf I- P_2}\}g\right]$. Hence, to get the full dissipation estimate we shall estimate the hydrodynamic part $[{\bf P_1}f,{\bf P_2}g]$ and the electric field $\nabla_x\Phi$ in
terms of the microscopic part, up to some (small) error terms. This will be done in the following lemma.

\begin{lemma}\label{lemma positive}
If $\mathcal{E}_N(t)\le \delta$, then for $k=0,\dots, N-1$,  we have
\begin{equation}\label{f positive estimate}
\begin{split}
\frac{dG_f^k}{dt}+C\norm{\nabla^{k+1}  {\bf P }_1f}^2 &\lesssim \norm{\nabla^{k}\{{\bf I-P_1}\} f }^2+\norm{\nabla^{k+1}\{{\bf I-P_1}\} f
}^2\\&\quad+\delta^2\left(\norm{\nabla^{k+1}g}^2+\norm{\nabla^{k+1} \nabla_x\Phi}^2\right)
\end{split}
\end{equation}
and
\begin{equation}\label{g positive estimate}
\begin{split}
&\frac{d}{dt}G_g^k(t)+C\left(\norm{\nabla^{k}  {\bf P }_2g }^2+\norm{\nabla^{k+1}{\bf P }_2g }^2+\norm{\left[\nabla^k \nabla_x
\Phi,\nabla^{k+1}\nabla_x \Phi,\nabla^{k+2}\nabla_x \Phi\right]}^2\right)
\\&\quad\lesssim\norm{\nabla^{k}\{{\bf I-P_2}\} g}^2+\norm{\nabla^{k+1}\{{\bf I-P_2}\} g}^2+\delta^2 \norm{\nabla^{k+1}f}^2.
\end{split}
\end{equation}
Here $G_f^k(t)$ and $G_g^k(t)$ are defined by \eqref{G_f^k(t)} and \eqref{G_g^k(t)} respectively which satisfy the estimates
\begin{equation}\label{G_fg^k estimate}
\left|G_f^k\right|\lesssim\norm{\nabla^k f}^2+\norm{\nabla^{k+1} f}^2\ \text{ and }\ \left|G_g^k\right|\lesssim\norm{\nabla^k g}^2+\norm{\nabla^{k+1}
g}^2+\norm{\nabla^k \nabla_x\Phi}^2.
\end{equation}
\end{lemma}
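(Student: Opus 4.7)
The plan is to follow the macroscopic--microscopic split of Guo together with interactive cross-energy functionals to harvest the dissipation of $[\mathbf{P_1}f,\mathbf{P_2}g]$ and of $\nabla_x\Phi$, which is absent from the coercivity \eqref{coercive boltzmann}. In both estimates, one projects the corresponding equation of \eqref{VPB_per} against the basis of the appropriate null space to obtain a closed first-order system for the hydrodynamic unknowns whose source terms are linear moments of the microscopic part and of $\mathfrak{N}_1,\mathfrak{N}_2$; the interactive functional is then built as a linear combination of cross $L^2_x$ products between the hydrodynamic components so that, after differentiating in $t$ and substituting the macroscopic system, $dG/dt$ carries a positive definite quadratic form in the target dissipation.

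For the first estimate, projecting the $f$-equation against $\{\sqrt{\mu}, v\sqrt{\mu}, (|v|^2-3)\sqrt{\mu}\}$ produces an Euler-type system for $[a,b,c]$ in which $\partial_t a$ is coupled to $\nabla\cdot b$, $\partial_t b$ to $\nabla a$, $\nabla c$ and $\nabla\cdot\Theta$ (with $\Theta$ a linear moment of $\{\mathbf{I-P_1}\}f$), and $\partial_t c$ to $\nabla\cdot b$ and $\nabla\cdot\Lambda$, all modulo nonlinear moments of $\mathfrak{N}_1$. Define $G_f^k$ as a carefully tuned linear combination of cross products of the form $-\langle\nabla^k b,\nabla^{k+1}a\rangle$, $-\langle\nabla^k b,\nabla^{k+1}c\rangle$ and analogues; differentiating in $t$ and substituting the system, integration by parts produces $C\|\nabla^{k+1}\mathbf{P_1}f\|^2$ on the LHS, while the microscopic stresses $\nabla\cdot\Theta$, $\nabla\cdot\Lambda$ are absorbed via Cauchy--Schwarz into $\|\nabla^k\{\mathbf{I-P_1}\}f\|^2+\|\nabla^{k+1}\{\mathbf{I-P_1}\}f\|^2$. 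The electric nonlinearity $\tfrac12\nabla_x\Phi\cdot vg-\nabla_x\Phi\cdot\nabla_v g$ in $\mathfrak{N}_1$ is quadratic in $(\nabla_x\Phi,g)$; using $\mathcal{E}_N(t)\le\delta$ with H\"older and Sobolev embedding (and Minkowski's inequality to interchange $v$ and $x$ for the $\nabla_v g$ piece) bounds its contribution by $\delta^2(\|\nabla^{k+1}g\|^2+\|\nabla^{k+1}\nabla_x\Phi\|^2)$, while the $\Gamma(f,f)$ piece is handled in the standard way. The bound on $|G_f^k|$ follows immediately from Cauchy--Schwarz.

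For the second estimate, $\mathcal{N}(\mathcal{L}_2)=\mathrm{span}\{\sqrt{\mu}\}$ is one-dimensional, so only $d$ and the momentum $b_g:=\int v\,g\sqrt{\mu}\,dv$ are involved, and the Poisson equation reduces to $\Delta\Phi=d$. Projecting against $\sqrt{\mu}$ and $v\sqrt{\mu}$ yields $\partial_t d+\nabla\cdot b_g=\tilde r_d$ and $\partial_t b_g+\nabla d+\nabla\cdot\widetilde\Theta-\nabla_x\Phi=\tilde r_b$. The explicit $-\nabla_x\Phi$ on the left is the key: set $G_g^k:=-\langle\nabla^k b_g,\nabla^k\nabla_x\Phi\rangle-\langle\nabla^{k+1}b_g,\nabla^{k+1}\nabla_x\Phi\rangle$. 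Differentiating in $t$, the self-pairing of $-\nabla_x\Phi$ yields $\|\nabla^k\nabla_x\Phi\|^2+\|\nabla^{k+1}\nabla_x\Phi\|^2$; pairing $\nabla d$ against $\nabla^k\nabla_x\Phi$ and integrating by parts using $\nabla\cdot\nabla_x\Phi=d$ yields $\|\nabla^k d\|^2+\|\nabla^{k+1}d\|^2$; and the commutator $\langle\nabla^k b_g,\nabla^k\partial_t\nabla_x\Phi\rangle$, after substituting $\partial_t\nabla_x\Phi=-\nabla\Delta^{-1}\nabla\cdot b_g+\text{nonlin}$ (from $\partial_t d=-\nabla\cdot b_g+\tilde r_d$ together with Poisson), produces a non-negative contribution because $\nabla\Delta^{-1}\nabla\cdot$ is the orthogonal projection onto curl-free fields. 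The Poisson relation then translates $\|\nabla^{k+1}d\|^2$ into $\|\nabla^{k+2}\nabla_x\Phi\|^2$, completing the LHS of \eqref{g positive estimate}; microscopic stress and $\mathcal{L}_2 g$ contributions land in $\|\nabla^k\{\mathbf{I-P_2}\}g\|^2+\|\nabla^{k+1}\{\mathbf{I-P_2}\}g\|^2$, while the nonlinearities in $\mathfrak{N}_2$ yield the $\delta^2\|\nabla^{k+1}f\|^2$ term. The bound on $|G_g^k|$ follows from Cauchy--Schwarz together with $\|b_g\|\lesssim\|g\|$.

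The main obstacle I anticipate is twofold. First, tuning the coefficients in the linear combinations defining $G_f^k$ and $G_g^k$ so that the resulting LHS quadratic form is simultaneously coercive in every hydrodynamic component while the cross terms remain controlled as in \eqref{G_fg^k estimate} is a delicate constant-chasing exercise---particularly for $G_f^k$, where the triangular coupling between $a$, $b$ and $c$ in the macroscopic system must all be accounted for. Second, controlling the nonlinear terms in $\mathfrak{N}_1,\mathfrak{N}_2$, especially those carrying $\nabla_v$ derivatives which a priori lose velocity regularity, so that every contribution lands either in the prescribed $\delta^2$-bounded right-hand side or is absorbed by the coercive dissipation, requires Gagliardo--Nirenberg--Sobolev interpolation and Minkowski's integral inequality, with the smallness $\mathcal{E}_N(t)\le\delta$ providing the room for absorption.
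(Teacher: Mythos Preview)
Your overall strategy is correct and matches the paper's: macro--micro decomposition, interactive cross functionals, and Sobolev interpolation for the nonlinear terms. However, your description of $G_f^k$ has a genuine gap. Cross products built only from the hydrodynamic fields $a,b,c$---such as $\langle\nabla^k b,\nabla^{k+1}a\rangle$ and $\langle\nabla^k b,\nabla^{k+1}c\rangle$---cannot produce the dissipation $\norm{\nabla^{k+1}b}^2$. Differentiating $\langle b,\nabla a\rangle$ and substituting the conservation laws yields $-\norm{\nabla a}^2+\norm{\nabla\cdot b}^2+\dots$, and similarly for $\langle b,\nabla c\rangle$; the $b$--terms always enter with the wrong sign. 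The Euler-type system for $[a,b,c]$ is symmetric hyperbolic and carries no intrinsic damping on $b$. The paper instead derives evolution equations for the \emph{second moments} $\widetilde{\mathcal{A}}_{ij}(\{\mathbf{I-P_1}\}f)$ and $\widetilde{\mathcal{B}}_i(\{\mathbf{I-P_1}\}f)$ of the microscopic part (equation \eqref{moment equations}), in which the principal linear terms are $\partial_i b_j+\partial_j b_i$ and $\partial_i c$ respectively; pairing these microscopic moments against $b_i$ and $\partial_i c$ is what generates $\norm{\nabla^{k+1}b}^2$ and $\norm{\nabla^{k+1}c}^2$. Only the recovery of $\norm{\nabla^{k+1}a}^2$ uses the pure hydrodynamic pairing $(\partial^\gamma b,\partial^\gamma\nabla_x a)$, and this must be added last with a small weight so that the bad $\norm{\nabla^{k+1}b}^2$ it produces can be absorbed.

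Your $G_g^k$ is a legitimate variant but differs from the paper's. The paper pairs $\mathcal{D}_i(\{\mathbf{I-P_2}\}g)$ against $\partial_i d-\partial_i\Phi$ at the single level $k$: the $\partial_i d$ piece yields $\norm{\nabla^{k+1}d}^2$ directly, the $-\partial_i\Phi$ piece yields $\norm{\nabla^k\nabla_x\Phi}^2$, and the cross term $2(\partial_i\Phi,\partial_i d)=-2\norm{d}^2$ gives $\norm{\nabla^k d}^2$ for free via the Poisson equation. This is more economical than your two-level construction and sidesteps the projection argument. For the nonlinear terms, note that the paper's key device is an explicit index-adjustment via Gagliardo--Nirenberg so that every bilinear contribution lands at order $k+1$ exactly (see e.g.\ \eqref{N2 0}); the $\delta^2\norm{\nabla^{k+1}f}^2$ and $\delta^2\norm{\nabla^{k+1}\nabla_x\Phi}^2$ pieces arising in \eqref{g positive estimate} are then absorbed into the left-hand side since $\delta$ is small, leaving only $\delta^2\norm{\nabla^{k+1}f}^2$.
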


\begin{proof}
As in \cite{G2006,J2009,DY,DS1,DS2}, the proof is based on the careful analysis of the {\it local conservation laws} and the {\it macroscopic
equations} which are derived from the so called macro-micro decomposition. Our contribution is to carefully estimate the nonlinear terms so that we
may close the energy estimates at each $\ell$-level in our weaker sense.

 First, multiplying $\eqref{VPB_per}_1$ by the collision invariants
$1,v,|v|^2/2$, $\eqref{VPB_per}_2$ by $1$, and then integrating in $v\in \r3$, we get the local conservation laws
\begin{equation}\label{local conservation laws 0}
\begin{split}
&\partial_t\int_{\r3} F\,dv + \nabla_x\cdot\int_{\r3} vF\,dv=0,
\\&\partial_t\int_{\r3} vF\,dv + \nabla_x\cdot\int_{\r3} v\otimes vF\,dv-\nabla_x\Phi\int_{\r3} G\,dv=0,
\\&\partial_t\int_{\r3} \frac{1}{2}|v|^2F\,dv + \nabla_x\cdot\int_{\r3} \frac{1}{2}|v|^2v F\,dv-\nabla_x\Phi\cdot \int_{\r3} v G\,dv=0,
\\ &\partial_t\int_{\r3} G\,dv + \nabla_x\cdot\int_{\r3} vG\,dv=0.
\end{split}
\end{equation}
Plugging $F=\mu+\sqrt{\mu}\left({\bf P_1}f+{\{\bf I- P_1\}}f\right)$ and $G=\sqrt{\mu}\left({\bf P_2}g+\{{\bf I- P_2}\}g\right)$ into \eqref{local
conservation laws 0}, and using the representations \eqref{hydrodynamic field 1} and \eqref{hydrodynamic field 2}, we obtain
\begin{equation}\label{local conservation laws}
\begin{split}
&\partial_t a + \nabla_x\cdot b=0,
\\&\partial_t b + \nabla_x\cdot(a+4c)+\nabla_x\cdot \mathcal{A}(\{{\bf I-P_1}\}f)=\nabla_x\Phi\, d,
\\&\partial_t (3a+12c)+\nabla_x\cdot(5 b)+\nabla_x\cdot\mathcal{B}(\{{\bf I-P_1}\} f) =\nabla_x\Phi\cdot \mathcal{D}( \{{\bf
I-P_2}\}g),
\\ &\partial_t d + \nabla_x\cdot \mathcal{D}( \{{\bf
I-P_2}\}g)=0,
\end{split}
\end{equation}
where we have defined the moment functions $\mathcal{A}=(\mathcal{A}_{ij})_{3\times 3}$, $\mathcal{B}=(\mathcal{B}_1,\mathcal{B}_2,\mathcal{B}_3)$
and $\mathcal{D}=(\mathcal{D}_1,\mathcal{D}_2,\mathcal{D}_3)$ by
\begin{equation}\label{moment functions}
 \mathcal{A}_{ij}(h)=\langle h,v_iv_j\sqrt{\mu}\rangle,\quad \mathcal{B}_{i}(h)=\langle h,|v|^2v_i\sqrt{\mu}\rangle\ \text{ and }\ \mathcal{D}_{i}(h)=\langle h,v_i\sqrt{\mu}\rangle.
\end{equation}
Notice that $\eqref{local conservation laws}_1$ and $\eqref{local conservation laws}_3$ implies
\begin{equation}\label{local_c}
\partial_t c+\frac{1}{6}\nabla_x\cdot b+ \frac{1}{12}\nabla_x\cdot \mathcal{B}(\{{\bf I-P_1}\} f)=\frac{1}{12}\nabla_x\Phi\cdot \mathcal{D}( \{{\bf
I-P_2}\}g).
\end{equation}
Also, the Poisson equation $\eqref{VPB_per}_3$ reads as
\begin{equation}\label{Poisson}
\Delta_x\Phi=d.
\end{equation}

Next, plugging $f={\bf P_1}f+{\{\bf I- P_1\}}f$ and $g={\bf P_2}g+\{{\bf I- P_2}\}g$ into \eqref{VPB_per}, we obtain
\begin{equation}\label{P equation}
\begin{split}
&\partial_t {\bf P_1} f + v\cdot\nabla_x {\bf P_1} f=-\partial_t{\{\bf I- P_1\}} f+\mathfrak{L}_1+\mathfrak{N}_1,
\\ &\partial_t {\bf P_2}g + v\cdot\nabla_x {\bf P_2} g-\nabla_x\Phi\cdot v\sqrt{\mu} =-\partial_t{\{\bf I- P_2\}} g+\mathfrak{L}_2+\mathfrak{N}_2,
\end{split}
\end{equation}
where the linear terms $\mathfrak{L}_1$ and $\mathfrak{L}_2$ are denoted by
\begin{equation}
\mathfrak{L}_1=-v\cdot\nabla_x {\{\bf I- P_1\}} f-\mathcal{L}_1 {\{\bf I- P_1\}} f,\quad \mathfrak{L}_2=-v\cdot\nabla_x {\{\bf I- P_2\}}
g-\mathcal{L}_2 {\{\bf I- P_2\}} g.
\end{equation}
 Motivated by \cite{DS1,DS2}, we will not use all of the {\it macroscopic equations} but only use the evolution of some
moments of the microscopic part that appeared in \eqref{local conservation laws}. To simplify the notations, we define two more moment functions
$\widetilde{\mathcal{A}}=(\widetilde{\mathcal{A}}_{ij})_{3\times 3}$ and
$\widetilde{\mathcal{B}}=(\widetilde{\mathcal{B}}_1,\widetilde{\mathcal{B}}_2,\widetilde{\mathcal{B}}_3)$ by
\begin{equation}\label{two moment functions}
 \widetilde{\mathcal{A}}_{ij}(h)=\mathcal{A}_{ij}(h)-\langle h,\sqrt{\mu}\rangle,
 \quad \widetilde{\mathcal{B}}_{i}(h)=\frac{1}{10}\left(\mathcal{B}_{i}(h)-5\mathcal{D}_{i}(h)\right).
\end{equation}
Applying $\widetilde{\mathcal{A}}_{ij}(\cdot), \widetilde{\mathcal{B}}_i(\cdot)$ to $\eqref{P equation}_1$ and $\mathcal{D}_i(\cdot)$ to $\eqref{P
equation}_2$ respectively, we get
\begin{equation}\label{moment equations}
\begin{split}
& \partial_t[\widetilde{\mathcal{A}}_{ij}({\{\bf I- P_1\}}
f)+4c\delta_{ij}]+\partial_ib_j+\partial_jb_i=\widetilde{\mathcal{A}}_{ij}(\mathfrak{L}_1+\mathfrak{N}_1),
\\&\partial_t \widetilde{\mathcal{B}}_{i}({\{\bf I- P_1\}} f)+2\partial_i c=\widetilde{\mathcal{B}}_{i}(\mathfrak{L}_1+\mathfrak{N}_1),
\\&\partial_t\mathcal{D}_{i}({\{\bf I- P_2\}} g)+\partial_i d-\partial_i \Phi=\mathcal{D}_{i}(\mathfrak{L}_2+\mathfrak{N}_2).
 \end{split}
\end{equation}
Notice that for fixed $i$, one can deduce from $\eqref{moment equations}_1$ that
\begin{equation}\label{moment b equation}
\begin{split}
&-\partial_t\left[\sum_j\partial_j\widetilde{\mathcal{A}}_{ji}({\{\bf I- P_1\}} f)+\frac{1}{2}\partial_i\widetilde{\mathcal{A}}_{ii}({\{\bf I- P_1\}}
f)\right]-\Delta_x b_i-\partial_{ii}b_i
\\&\quad=\frac{1}{2}\sum_{j\neq i}\partial_i\widetilde{\mathcal{A}}_{jj}(\mathfrak{L}_1+\mathfrak{N}_1)-\sum_{j}\partial_j\widetilde{\mathcal{A}}_{ji}(\mathfrak{L}_1+\mathfrak{N}_1).
\end{split}
\end{equation}

Bypassing the nonlinear coupling terms, the local conservation laws $\eqref{local conservation laws}_1$--$\eqref{local conservation laws}_3$,
$\eqref{local_c}$ and the macroscopic equations $\eqref{moment equations}_1$--$\eqref{moment equations}_2$, \eqref{moment b equation} on the
macroscopic coefficients $a,b,c$ are decoupled from those on $d$ and $\nabla_x\Phi$, and they have the same structure as the pure Boltzmann case. So
by these equations and following the proof in \cite{G2006} or \cite{DY,DS1,DS2,J2009}, we can deduce the dissipative estimates of $ a, b, c$: for
$|\gamma|=k$ with $k=0,\dots,N-1$,
\begin{eqnarray}
&&\label{b_estimate}
\begin{split}
&\frac{d}{dt}\left( \partial^\gamma\sum_j\partial_j \widetilde{\mathcal{A}}_{ji}({\{\bf I- P_1\}} f)+\frac{1}{2}\partial^\gamma\partial_i\widetilde{\mathcal{A}}_{ii}({\{\bf I- P_1\}} f),
\partial^\gamma  b_i\right)+\frac{1}{2}\norm{\partial^\gamma  \nabla_x b}^2
\\&\quad\le\varepsilon
\left(\norm{\partial^\gamma \nabla_x a}^2+\norm{\partial^\gamma \nabla_x c}^2+\norm{\partial^\gamma \left(\nabla_x\Phi\, d\right)}^2\right)
\\&\qquad
 + C_\varepsilon \left(\norm{\partial^\gamma {\{\bf I- P_1\}} f}^2+\norm{\nabla_x \partial^\gamma{\{\bf I- P_1\}} f}^2+\norm{
\partial^\gamma\mathfrak{N}_{1,\parallel}}^2\right),
\end{split}
\\&&\label{c_estimate}
\begin{split}
&\frac{d}{dt}\left(
\partial^\gamma \widetilde{\mathcal{B}}_{i}({\{\bf I- P_1\}} f), \partial^\gamma \partial_i  c\right)+ \norm{\partial^\gamma  \nabla_x c}^2
\\&\quad\le \varepsilon \left(\norm{\partial^\gamma \nabla_x b}^2+\norm{\partial^\gamma \left(\nabla_x\Phi\cdot \mathcal{D}( \{{\bf I-P_2}\}g)\right)}^2\right)
\\&\qquad + C_\varepsilon \left(\norm{\partial^\gamma {\{\bf I- P_1\}} f}^2+\norm{\nabla_x \partial^\gamma{\{\bf I- P_1\}} f}^2+\norm{
\partial^\gamma\mathfrak{N}_{1,\parallel}}^2\right),
\end{split}
\\&&\label{a_estimate}
\begin{split}
&\frac{d}{dt}(\partial^\gamma b,\partial^\gamma\nabla_xa)+\frac{1}{2}\norm{\partial^\gamma\nabla_xa}^2
 \\&\quad \le C\left(\norm{\partial^\gamma\nabla_x
b}^2+\norm{\partial^\gamma\nabla_xc}^2+\norm{\nabla_x \partial^\gamma{\{\bf I- P_1\}} f}^2+\norm{
\partial^\gamma(\nabla_x\Phi\, d)}^2\right).
\end{split}
\end{eqnarray}
Here $\mathfrak{N}_{1,\parallel}$ is defined as $\langle\mathfrak{N}_{1},\zeta\rangle$ with $\zeta$ is some linear combination of
$[\sqrt{\mu},v_i\sqrt{\mu},v_iv_j\sqrt{\mu},v_i|v|^2\sqrt{\mu}]$, etc. We multiply \eqref{b_estimate} and \eqref{c_estimate} by a constant $M>0$ and
then sum up them as well as \eqref{a_estimate}. We first fix $M$ to be sufficiently large such that the first two terms in the right hand side of
\eqref{a_estimate} can be absorbed, and then for such fixed $M$ we further let $\varepsilon>0$ sufficiently small such that the first term in the
right hand side of \eqref{c_estimate} and the first two terms in the right hand side of \eqref{b_estimate} can be absorbed. Hence, we obtain
\begin{equation}\label{full_estimate 1}
\begin{split}
\frac{d}{dt}G_f^k(t)+C\norm{\nabla^{k+1}{\bf P }_1f}^2 &\lesssim \norm{\nabla^{k}\{{\bf I-P_1}\} f }^2+\norm{\nabla^{k+1}\{{\bf I-P_1}\} f }^2+\norm{
\nabla^{k}\mathfrak{N}_{1,\parallel}}^2
\\&\quad + \norm{
\nabla^{k}(\nabla_x\Phi\, d)}^2+\norm{\nabla^{k}\left(\nabla_x\Phi\cdot \mathcal{D}( \{{\bf I-P_2}\}g)\right)}^2.
\end{split}
\end{equation}
where $G_f^k(t)$ is defined by
\begin{equation}\label{G_f^k(t)}
\begin{split}
&G_f^k(t):=\left.\sum_{|\gamma|=k}\right\{M\left(\partial^\gamma\sum_j\partial_j \widetilde{\mathcal{A}}_{ji}({\{\bf I- P_1\}}
f)+\frac{1}{2}\partial^\gamma\partial_i\widetilde{\mathcal{A}}_{ii}({\{\bf I- P_1\}} f),
\partial^\gamma b_i\right)
\\&\quad  \qquad\qquad\quad\ \left.+M\left(
\partial^\gamma \widetilde{\mathcal{B}}_{i}({\{\bf I- P_1\}} f), \partial^\gamma \partial_i  c\right)
+(\partial^\gamma b,\partial^\gamma\nabla_xa)\right\}.
\end{split}
\end{equation}

We now focus on the derivation of the dissipation estimates of $d$ and $\nabla_x\Phi$. However, we shall estimate in the same spirit with a bit more
attention on the electric field. Applying $\partial^\gamma$ with $|\gamma|=k$ to $\eqref{moment equations}_3$, multiplying the resulting equations by
$\partial^\gamma (\partial_i d-\partial_i\Phi)$ and then integrating by parts over $x\in \r3$, by the Poisson equation \eqref{Poisson}, we get
\begin{equation}\label{d_es_0}
\begin{split}
&\norm{\partial^\gamma  \nabla_x d}^2+\norm{\partial^\gamma \nabla_x \Phi}^2 \\&\quad=2(\partial^\gamma\partial_i \Phi,\partial^\gamma
\partial_i d)-(
\partial_t\partial^\gamma \mathcal{D}_{i}({\{\bf I- P_2\}} g), \partial^\gamma(\partial_i d-\partial_i\Phi))
\\&\qquad+ ( \partial^\gamma \mathcal{D}_{i}(\mathfrak{L}_2+\mathfrak{N}_2), \partial^\gamma(\partial_i d-\partial_i\Phi))
\\&\quad=-2\norm{\partial^\gamma  d}^2-\frac{d}{dt}(
\partial^\gamma \mathcal{D}_{i}({\{\bf I- P_2\}} g), \partial^\gamma(\partial_i d-\partial_i\Phi))
\\&\qquad-(\partial^\gamma \partial_i \mathcal{D}_{i}({\{\bf I- P_2\}} g), \partial^\gamma \partial_td)
-(\partial^\gamma  \mathcal{D}_{i}({\{\bf I- P_2\}} g), \partial^\gamma \partial_t \partial_i\Phi)
\\&\qquad+ ( \partial^\gamma \mathcal{D}_{i}(\mathfrak{L}_2+\mathfrak{N}_2), \partial^\gamma(\partial_i d-\partial_i\Phi))
\\&\quad\le-2\norm{\partial^\gamma  d}^2-\frac{d}{dt}\left(
\partial^\gamma \mathcal{D}_{i}({\{\bf I- P_2\}} g), \partial^\gamma(\partial_i d-\partial_i\Phi)\right)
\\&\qquad+\varepsilon\left(\norm{\partial^\gamma \partial_td}^2+\norm{\partial^\gamma \partial_t\nabla_x\Phi}^2 +\norm{\partial^\gamma\nabla_x d}^2+\norm{\partial^\gamma\nabla_x \Phi}^2\right)
\\&\qquad + C_\varepsilon \left(\norm{\partial^\gamma {\{\bf
I- P_2\}} g}^2+\norm{\nabla_x \partial^\gamma{\{\bf I- P_2\}} g}^2+\norm{ \partial^\gamma\mathfrak{N}_{2,\parallel}}^2\right) .
\end{split}
\end{equation}
It is crucial that it follows from $\eqref{local conservation laws}_4$ and \eqref{Poisson} that the follows hold:
\begin{equation}\label{d_01}
\norm{\partial^\gamma \partial_td}^2\le C\norm{\nabla_x \partial^\gamma{\{\bf I- P_2\}} g}^2
\end{equation}
and
\begin{equation}\label{d_02}
\norm{\partial^\gamma \partial_t\nabla_x\Phi}^2 \le C\norm{\partial^\gamma{\{\bf I- P_2\}} g}^2.
\end{equation}
Hence, by \eqref{d_01}--\eqref{d_02} and the Poisson equation \eqref{Poisson} again, we deduce from \eqref{d_es_0} that, by taking $\varepsilon>0$
 sufficiently small,
\begin{equation}\label{full_estimate 2}
\begin{split}
&\frac{d}{dt}G_g^k(t)+C\left(\norm{\nabla^{k}  {\bf P }_2g }^2+\norm{\nabla^{k+1}{\bf P }_2g }^2+\norm{[\nabla^k \nabla_x \Phi,\nabla^{k+1}\nabla_x
\Phi,\nabla^{k+2}\nabla_x \Phi]}^2\right)
\\&\quad\lesssim\norm{\nabla^{k}\{{\bf I-P_2}\} g }^2+\norm{\nabla^{k+1}\{{\bf I-P_2}\} g}^2
+\norm{\nabla^{k}\mathfrak{N}_{2,\parallel}}^2.
\end{split}
\end{equation}
where $G_g^k(t)$ is defined by
\begin{equation}\label{G_g^k(t)}
\begin{split}
&G_g^k(t):= \sum_{|\gamma|=k} \left(
\partial^\gamma \mathcal{D}_{i}({\{\bf I- P_2\}} g), \partial^\gamma(\partial_i d-\partial_i\Phi)\right).
\end{split}
\end{equation}

It is clear that $G_f^k(t)$ and $G_g^k(t)$ satisfy the estimates \eqref{G_fg^k estimate}, and it remains to estimate the nonlinear terms in the right
hand side of \eqref{full_estimate 1} and \eqref{full_estimate 2}. The main idea is that we will carefully adjust the index of spatial derivatives when estimating the nonlinear terms so that they
can be bounded by the right hand side of \eqref{f positive estimate} or \eqref{g positive estimate}. We begin with the estimate of the term $\norm{
\nabla^k\mathfrak{N}_{2,\parallel}}^2:=\sum_{|\gamma|=k}\norm{\langle\partial^\gamma \mathfrak{N}_2,\zeta\rangle}^2$. First, by the estimate
\eqref{nonlineares2} of Lemma \ref{nonlinearcol1} and the fact that $\zeta$ decays exponentially in $v$, we have
\begin{equation}\label{positive f}
\begin{split}
\sum_{|\gamma|=k}\norm{\langle\partial^\gamma\Gamma(g,f),\zeta\rangle}^2&\lesssim\sum_{|\gamma|=k}\sum_{\gamma_1\le \gamma}\left\|
\langle\Gamma(\partial^{\gamma_1} g,\partial^{\gamma-\gamma_1}f),\zeta\rangle \right\|^2\\&\lesssim \sum_{|\gamma_1|\le k}\norm{|\nabla^{|\gamma_1|}
g|_2|\nabla^{k-|\gamma_1|}f|_2}^2.
\end{split}
\end{equation}
By H\"older's inequality, Minkowski's integral inequality \eqref{min es} of Lemma \ref{Minkowski}, the Sobolev interpolation of Lemma
\ref{interpolation} and Young's inequality, we obtain
\begin{equation}\label{N2 0}
\begin{split}
\norm{|\nabla^{|\gamma_1|} g|_2|\nabla^{k-|\gamma_1|}f|_2} &\lesssim \norm{\nabla^{|\gamma_1|} g}_{L^6_xL^2_v}\norm{
\nabla^{k-|\gamma_1|}f}_{L^3_xL^2_v}\lesssim \norm{\nabla^{|\gamma_1|} g}_{L^2_vL^6_x}\norm{ \nabla^{k-|\gamma_1|}f}_{L^2_vL^3_x}
\\&\lesssim \norm{g}^{1-\frac{|\gamma_1|+1}{k+1}}\norm{\nabla^{k+1}g}^{\frac{|\gamma_1|+1}{k+1}}
\norm{\nabla^\alpha f}^{\frac{|\gamma_1|+1}{k+1}}\norm{\nabla^{k+1}f}^{1-\frac{|\gamma_1|+1}{k+1}}
\\&\lesssim \delta\left(\norm{\nabla^{k+1}f}+\norm{\nabla^{k+1}g}\right).
\end{split}
\end{equation}
Here $\alpha$ comes from the adjustment of the index and is defined by
\begin{equation}
\begin{split}
&\frac{1}{3}-\frac{k-|\gamma_1|}{3}=\left(\frac{1}{2}-\frac{\alpha}{3}\right)\times
\frac{|\gamma_1|+1}{k+1}+\left(\frac{1}{2}-\frac{k+1}{3}\right)\times\left(1-\frac{|\gamma_1|+1}{k+1}\right)
\\&\quad\Longrightarrow\alpha=\frac{k+1}{2(|\gamma_1|+1)}\le \frac{k+1}{2}\le \frac{N}{2}.
\end{split}
\end{equation}
Hence, we have
\begin{equation}\label{N2 1}
\sum_{|\gamma|=k}\norm{\langle\partial^\gamma\Gamma(g,f),\zeta\rangle}^2 \lesssim \delta^2\left(\norm{\nabla^{k+1}f}^2+\norm{\nabla^{k+1}g}^2\right).
\end{equation}
Similarly, we may apply the same arguments to obtain, by Lemma \ref{1interpolation} and Lemma \ref{interpolation},
\begin{equation}\label{N2 2}
\begin{split}
\sum_{|\gamma|=k}\norm{\langle\partial^\gamma (\nabla_x\Phi\cdot vf),\zeta\rangle}^2 &\lesssim \sum_{|\gamma|=k}\sum_{\gamma_1\le
\gamma}\norm{\partial^{\gamma_1}\langle f,v\zeta\rangle\cdot \partial^{\gamma-\gamma_1} \nabla_x\Phi}^2
\\&\lesssim \delta^2\left(\norm{\nabla^{k+1}\langle f,v\zeta\rangle}^2+\norm{\nabla^{k+1} \nabla_x\Phi}^2\right)
\\&\lesssim \delta^2\left(\norm{\nabla^{k+1}f}^2+\norm{\nabla^{k+1} \nabla_x\Phi}^2\right),
\end{split}
\end{equation}
and with the additional integration by parts over $v$-variable to have
\begin{equation}\label{N2 3}
\begin{split}
\sum_{|\gamma|=k}\norm{\langle\partial^\gamma (\nabla_x\Phi\cdot\nabla_vf),\zeta\rangle}^2 &=\sum_{|\gamma|=k}\norm{\langle\partial^\gamma
(\nabla_x\Phi f),\nabla_v\zeta\rangle}^2
\\&\lesssim \sum_{|\gamma|=k}\sum_{\gamma_1\le \gamma}\norm{\partial^{\gamma_1}\langle f,\nabla_v\zeta\rangle\cdot \partial^{\gamma-\gamma_1} \nabla_x\Phi}^2
\\&\lesssim \delta^2\left(\norm{\nabla^{k+1}\langle f,\nabla_v\zeta\rangle}^2+\norm{\nabla^{k+1} \nabla_x\Phi}^2\right)
\\&\lesssim \delta^2\left(\norm{\nabla^{k+1}f}^2+\norm{\nabla^{k+1} \nabla_x\Phi}^2\right).
\end{split}
\end{equation}
Thus, summing up the estimates \eqref{N2 1}--\eqref{N2 3}, we have
\begin{equation}\label{N2}
\norm{\nabla^{k}\mathfrak{N}_{2,\parallel}}^2\lesssim \delta^2\left(\norm{\left[\nabla^{k+1}f,\nabla^{k+1}g\right]}^2+\norm{\nabla^{k+1}
\nabla_x\Phi}^2\right).
\end{equation}
Then plugging the estimate \eqref{N2} into \eqref{full_estimate 2}, since $\delta$ is small, we obtain \eqref{g positive estimate}.

Similarly, we have
\begin{equation}\label{N1}
\norm{\nabla^{k}\mathfrak{N}_{1,\parallel}}^2\lesssim \delta^2\left(\norm{\left[\nabla^{k+1}f,\nabla^{k+1}g\right]}^2+\norm{\nabla^{k+1}
\nabla_x\Phi}^2\right).
\end{equation}
However, we will also use the same argument above to estimate the remaining two terms in the right hand side of \eqref{full_estimate 1} to obtain
\begin{equation}\label{positive 1}
\begin{split}
\sum_{|\gamma|=k}\norm{\partial^\gamma(d\nabla_x\Phi)}^2&\lesssim
 \delta^2\left(\norm{\nabla^{k+1}d}^2+ \norm{\nabla^{k+1}\nabla_x\Phi}^2\right)
 \\&\lesssim
 \delta^2\left(\norm{\nabla^{k+1}g}^2+ \norm{\nabla^{k+1}\nabla_x\Phi}^2\right)
 \end{split}
\end{equation}
and
\begin{equation}\label{positive 2}
\begin{split}
\sum_{|\gamma|=k}\norm{\partial^\gamma \left(\nabla_x\Phi\cdot \mathcal{D}( \{{\bf I-P_2}\}g)\right)}^2 &\lesssim
\delta^2\left(\norm{\nabla^{k+1}\mathcal{D}( \{{\bf I-P_2}\}g)}^2+ \norm{\nabla^{k+1}\nabla_x\Phi}^2\right)
\\&\lesssim \delta^2\left(\norm{\nabla^{k+1} g}^2+ \norm{\nabla^{k+1}\nabla_x\Phi}^2\right).
\end{split}
\end{equation}
Then plugging the estimates \eqref{N1}--\eqref{positive 2} into \eqref{full_estimate 1}, since $\delta$ is small, we obtain \eqref{f positive
estimate}.
\end{proof}

Now we consider the energy estimates for the pure spatial derivatives of the solution.
\begin{lemma}\label{lemma spatial energy}
If $\mathcal{E}_N(t)\le \delta$, then for $k=0,\dots, N-1$, we have
\begin{equation}\label{spatial energy estimate 1}
\begin{split}
&\frac{d}{dt}\left(\norm{\left[\nabla^k f,\nabla^k g\right]}^2+\norm{\nabla^k \nabla_x\Phi}^2\right) +C\norm{\left[\nabla^k\{{\bf I-P_1}\}
f,\nabla^k\{{\bf I-P_2}\} g\right]}_\nu^2
\\&\quad \lesssim \delta \left(\norm{\left[\nabla^{k+1} f,\nabla^{k+1} g\right]}^2+\norm{\nabla^{k}{\bf P_2}g}^2
+\norm{\left[\nabla^k\nabla_x\Phi,\nabla^{k+1}\nabla_x\Phi\right]}^2\right.
\\&\qquad\quad\ \left.+\sum_{2\le \ell \le
N}\norm{\nabla^\ell\nabla_x\Phi}^2+\sum_{1\le \ell \le N}\norm{\left[\nabla^{\ell}\{{\bf I- P_1}\}f,\nabla^{\ell}\{{\bf I- P_2}\}g\right]}_\nu^2\right);
\end{split}
\end{equation}
and for $k=N$, we have
\begin{equation}\label{spatial energy estimate 2}
\begin{split}
&\frac{d}{dt}\left(\norm{\left[\nabla^N f,\nabla^N g\right]}^2+\norm{\nabla^N \nabla_x\Phi}^2\right) +C\norm{\left[\nabla^N\{{\bf I-P_1}\}
f,\nabla^N\{{\bf I-P_2}\} g\right]}_\nu^2
\\&\quad\lesssim\delta \left(\norm{\left[\nabla^N f,\nabla^N g\right]}^2+\sum_{2\le \ell \le N}\norm{\nabla^\ell\nabla_x\Phi}^2+\sum_{1\le \ell \le N}\norm{\left[\nabla^{\ell}\{{\bf I- P_1}\}f,\nabla^{\ell}\{{\bf I- P_2}\}g\right]}_\nu^2\right).
\end{split}
\end{equation}
\end{lemma}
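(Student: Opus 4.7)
The plan is to apply $\partial^\gamma$ with $|\gamma|=k$ to both $\eqref{VPB_per}_1$ and $\eqref{VPB_per}_2$, pair the results with $\partial^\gamma f$ and $\partial^\gamma g$ in $L^2_{x,v}$, and sum over $|\gamma|=k$. The streaming terms integrate to zero after integration by parts in $x$, and the coercivity \eqref{coercive boltzmann} produces the dissipation $\norm{[\nabla^k\{\mathbf{I-P_1}\}f,\nabla^k\{\mathbf{I-P_2}\}g]}_\nu^2$ on the left-hand side.

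The key cancellation responsible for the $\norm{\nabla^k\nabla_x\Phi}^2$ term in the energy comes from the linear Vlasov coupling $-\nabla_x\Phi\cdot v\sqrt{\mu}$ in the $g$-equation. Using $\mathcal{D}(\mathbf{P_2}g)=0$, the local conservation law $\eqref{local conservation laws}_4$, and the Poisson equation \eqref{Poisson}, one obtains
\begin{equation*}
-(\partial^\gamma\nabla_x\Phi\cdot v\sqrt{\mu},\partial^\gamma g)=-(\partial^\gamma\nabla_x\Phi,\partial^\gamma\mathcal{D}(\{\mathbf{I-P_2}\}g))=\frac{1}{2}\frac{d}{dt}\norm{\partial^\gamma\nabla_x\Phi}^2,
\end{equation*}
which after summation over $|\gamma|=k$ supplies exactly the $\norm{\nabla^k\nabla_x\Phi}^2$ summand in the energy identity.

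The bulk of the work is to bound $\sum_{|\gamma|=k}[(\partial^\gamma\mathfrak{N}_1,\partial^\gamma f)+(\partial^\gamma\mathfrak{N}_2,\partial^\gamma g)]$ by $\delta$ times the right-hand side of \eqref{spatial energy estimate 1} or \eqref{spatial energy estimate 2}. For the collisional pieces $\Gamma(f,f)$ and $\Gamma(g,f)$, the orthogonality $\langle\Gamma(\cdot,\cdot),\zeta\rangle=0$ for $\zeta\in\mathcal{N}(\mathcal{L}_1)\cup\mathcal{N}(\mathcal{L}_2)$ lets me replace $\partial^\gamma f,\partial^\gamma g$ by their microscopic parts; then Lemma \ref{nonlinearcol1}, Minkowski's integral inequality, and the Gagliardo--Nirenberg interpolation of Lemma \ref{interpolation}, applied exactly as in \eqref{N2 0}, deliver a $\sqrt{\mathcal{E}_N(t)}\lesssim\sqrt{\delta}$ factor multiplying admissible dissipation norms. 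For the field-interaction terms $\frac{1}{2}\nabla_x\Phi\cdot vg,\ \nabla_x\Phi\cdot\nabla_v g,\ \frac{1}{2}\nabla_x\Phi\cdot vf,\ \nabla_x\Phi\cdot\nabla_v f$, I expand by Leibniz; intermediate-order terms are controlled by Gagliardo--Nirenberg so as to produce $\sqrt{\delta}$-weighted products that fit inside the right-hand side. The top-order piece (all $k$ derivatives on one factor) is where the two-species cancellation enters: summing the four top-order contributions from $\mathfrak{N}_1,\mathfrak{N}_2$ and integrating by parts in $v$ collapses the $\nabla_v$-pieces and leaves $\int\nabla_x\Phi\cdot v\,\partial^\gamma f\,\partial^\gamma g\,dv\,dx$, which I control via $\norm{\nabla_x\Phi}_{L^\infty_x}\lesssim\sqrt{\delta}$ together with the splittings $f=\mathbf{P_1}f+\{\mathbf{I-P_1}\}f$ and $g=\mathbf{P_2}g+\{\mathbf{I-P_2}\}g$ in order to land in the admissible norms.

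The principal obstacle is the top level $k=N$: on the right-hand side of \eqref{spatial energy estimate 2} neither $\norm{\nabla^{N+1}[f,g]}$ nor an extra $\norm{\nabla^{N+1}\nabla_x\Phi}$ is at my disposal. This forces a careful redistribution of derivatives in each Leibniz expansion: the highest available spatial derivative is always to be placed on $\nabla_x\Phi$ (whose $(N{+}1)$-st derivative is still controlled by $\mathcal{E}_N$ via the Poisson equation) or absorbed into a microscopic dissipation via the $\mathbf{I-P_1},\mathbf{I-P_2}$ splittings, while the lower-derivative factors are handled by low-order Sobolev embeddings inside $\mathcal{E}_N$. With this tailored accounting each nonlinear contribution fits into the sums $\sum_{2\le\ell\le N}\norm{\nabla^\ell\nabla_x\Phi}^2$ and $\sum_{1\le\ell\le N}\norm{[\nabla^\ell\{\mathbf{I-P_1}\}f,\nabla^\ell\{\mathbf{I-P_2}\}g]}_\nu^2$ that the lemma allows on the right.
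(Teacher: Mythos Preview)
Your proposal is correct and follows essentially the same route as the paper: the energy identity with the Poisson/conservation-law trick for $\norm{\nabla^k\nabla_x\Phi}^2$, the coercivity \eqref{coercive boltzmann}, the orthogonality $\langle\Gamma,\zeta\rangle=0$ for the collision pieces together with Lemma~\ref{nonlinearcol1} and interpolation, and the crucial top-order cancellation $(\nabla_x\Phi\cdot\nabla_v\partial^\gamma f,\partial^\gamma g)+(\nabla_x\Phi\cdot\nabla_v\partial^\gamma g,\partial^\gamma f)=0$ are exactly the ingredients the paper uses. The only organizational difference is that the paper treats the $v$-weighted field terms $I_2,J_2$ and the $\nabla_v$-field terms $I_3+J_3$ separately rather than all at once, and is more explicit about why the $\nu$-weighted microscopic factors and the velocity-derivative factors cannot be interpolated (forcing the sums $\sum_{1\le\ell\le N}\norm{\nabla^\ell\{\mathbf{I-P_i}\}\cdot}_\nu^2$ and $\sum_{2\le\ell\le N}\norm{\nabla^\ell\nabla_x\Phi}^2$ onto the right-hand side); your parenthetical about placing the top derivative on $\nabla_x\Phi$ via the $(N{+}1)$-st order Poisson bound is not actually needed (nor is that term in the stated right-hand side), since the paper instead puts $L^\infty$ on the low-derivative factor and lands directly in $\norm{[\nabla^N f,\nabla^N g]}^2$ and the allowed sums.
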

\begin{proof}
The standard $\partial^\gamma$ with $|\gamma|=k$ energy estimates on $\eqref{VPB_per}_1$ and $\eqref{VPB_per}_2$ give rise to
\begin{equation}\label{EL11 f}
\frac{1}{2}\frac{d}{dt}\norm{\partial^\gamma f}^2+ (\mathcal{L}_1\partial^\gamma f ,\partial^\gamma f ) =(\partial^\gamma \mathfrak{N}_1,
\partial^\gamma f):=I_1+I_2+I_3
\end{equation}
and
\begin{equation}\label{EL11 g}
\frac{1}{2}\frac{d}{dt}\norm{\partial^\gamma g}^2-(\partial^\gamma\nabla_x\Phi\cdot v\sqrt{\mu},
\partial^\gamma g)+ (\mathcal{L}_2\partial^\gamma g ,\partial^\gamma g )
=(\partial^\gamma \mathfrak{N}_2,
\partial^\gamma g):=J_1+J_2+J_3.
\end{equation}

We first estimate the left hand side of \eqref{EL11 f}--\eqref{EL11 g}. By $\eqref{local conservation laws}_4$ and \eqref{Poisson}, we have
\begin{equation}\label{poisson estimate}
\begin{split}
-(\partial^\gamma\nabla_x\Phi\cdot v\sqrt{\mu}, \partial^\gamma g)&=(\partial^\gamma\Phi, \nabla_x\cdot  \mathcal{D}( \{{\bf I-P_2}\}g))
\\&=-(\partial^\gamma\Phi, \partial_t\partial^\gamma d)=-(\partial^\gamma\Phi,
\partial_t\partial^\gamma\Delta_x\Phi)=\frac{1}{2}\frac{d}{dt}\norm{\partial^\gamma\nabla_x\Phi}^2.
\end{split}
\end{equation}
The estimate \eqref{3} of Lemma \ref{linear c} implies
\begin{equation}\label{micro est 1}
(\mathcal{L}_1\partial^\gamma f ,\partial^\gamma f ) \ge \sigma_0\norm{\partial^\gamma\{{\bf I-P_1}\} f}_\nu^2
\end{equation}
and
\begin{equation}\label{micro est 2}
(\mathcal{L}_2\partial^\gamma g ,\partial^\gamma g ) \ge \sigma_0\norm{\partial^\gamma\{{\bf I-P_2}\} g}_\nu^2.
\end{equation}

Next, we turn to estimate the right hand side of \eqref{EL11 f}--\eqref{EL11 g}. First, by the collision invariant property and the estimate
\eqref{nonlineares3} (with $\eta=1/2$) of Lemma \ref{nonlinearcol1}, we obtain
\begin{equation}\label{line 1 g}
\begin{split}
J_1&:=(\partial^\gamma\Gamma(g,f),\partial^\gamma g)=\sum_{\gamma_1\le \gamma}C_\gamma^{\gamma_1}(\Gamma(\partial^{\gamma_1} g,
\partial^{\gamma-\gamma_1} f),\partial^\gamma \{{\bf I-P_2}\}g)
\\&\lesssim\sum_{\gamma_1\le \gamma}\norm{\nu^{-1/2}\Gamma(\partial^{\gamma_1} g,
\partial^{\gamma-\gamma_1} f)}\norm{\nu^{1/2}\partial^\gamma \{{\bf I-P_2}\}g}
\\&\lesssim\sum_{\gamma_1\le \gamma}\norm{|\nabla^{|\gamma_1|} g|_2|\nabla^{k-|\gamma_1|} f|_\nu
+|\nabla^{|\gamma_1|} g|_\nu|\nabla^{k-|\gamma_1|} f|_2}\norm{\nabla^k\{{\bf I- P_2}\}g}_\nu.
\end{split}
\end{equation}
For the first term in the right hand side of \eqref{line 1 g}, we use the splitting $f={\bf P_1}f+\{{\bf I-P_1}\}f$ to have
\begin{equation}\label{444}
\begin{split}
\norm{|\nabla^{|\gamma_1|} g|_2|\nabla^{k-|\gamma_1|} f|_\nu }&\lesssim\norm{|\nabla^{|\gamma_1|} g|_2|\nabla^{k-|\gamma_1|} f|_2 }
+\norm{|\nabla^{|\gamma_1|} g|_2|\nabla^{k-|\gamma_1|}\{{\bf I-P_1}\} f|_\nu} \\&:= J_{11}+J_{12}.
\end{split}
\end{equation}
For the term $J_{11}$, if $k=0,\dots,N-1$, it has been already bounded in \eqref{N2 0} as
\begin{equation}\label{N2 0''}
J_{11}\lesssim \delta\left(\norm{\nabla^{k+1}f}+\norm{\nabla^{k+1}g}\right);
\end{equation}
while for $k=N$, we then take the supremum in $x$ on the factor with the less number of derivatives to have, for instance if $|\gamma_1|\le
\frac{N}{2}$, by Lemma \ref{Minkowski} and Lemma \ref{interpolation}
\begin{equation}\label{temple}
\begin{split}
J_{11}&\lesssim\norm{\nabla^{|\gamma_1|}g}_{L^\infty_xL^2_v}\norm{\nabla^{N-|\gamma_1|}f}\lesssim\norm{\nabla^{|\gamma_1|}g}_{L^2_vL^\infty_x}\norm{\nabla^{N-|\gamma_1|}f}
\\ &\lesssim \norm{\nabla^{\alpha}g}^{1-\frac{|\gamma_1|}{N}}\norm{\nabla^{N}g}^{\frac{|\gamma_1|}{N}}\norm{f}^{\frac{|\gamma_1|}{N}}\norm{ \nabla^Nf}^{1-\frac{|\gamma_1|}{N}}
\\&\lesssim \delta \left(\norm{\nabla^{N}f}+\norm{\nabla^{N}g}\right),
\end{split}
\end{equation}
where we have denoted $\alpha$ by
\begin{equation}
\begin{split}
&-\frac{|\gamma_1|}{3}=\left(\frac{1}{2}-\frac{\alpha}{3}\right)\times\left(1-\frac{|\gamma_1|}{N}\right)+\left(\frac{1}{2}-\frac{N}{3}\right)\times\frac{|\gamma_1|}{N}
\\&\quad\Longrightarrow\alpha=\frac{3N}{2(N-|\gamma_1|)}\le 3\ \text{ since }|\gamma_1|\le N/2.
\end{split}
\end{equation}
For the term $J_{12}$, note that we can only bound the $\nu$-weighted factor by the dissipation, so we can not pursue as before to adjust the index.
Noticing that $\{{\bf I-P_1}\}f$ is always part of the dissipation. If $|\gamma_1|\le k-2$ (if $k-2<0$, then it's nothing in this case, etc.) and
hence $k-|\gamma_1|\ge 2$, then we bound
\begin{equation}
\begin{split}
J_{12}&\le \norm{\nabla^{|\gamma_1|}g}_{L^\infty_x}\norm{\nabla^{k-|\gamma_1|}\{{\bf I-P_1}\}f}_\nu
\\&\lesssim \delta \sum_{2\le \ell\le N}\norm{\nabla^\ell \{{\bf I-P_1}\}f}_\nu;
\end{split}
\end{equation}
and if $|\gamma_1|\ge k-1$ and hence $k-|\gamma_1|\le 1$, then by Sobolev's inequality, we bound
\begin{equation}
\begin{split}
J_{12}&\le \norm{\nabla^{|\gamma_1|}g}\norm{|\nabla^{k-|\gamma_1|}\{{\bf I-P_1}\}f|_\nu}_{L^\infty_x} \le
\norm{\nabla^{|\gamma_1|}g}\norm{\nu^{1/2}\nabla^{k-|\gamma_1|}\{{\bf I-P_1}\}f}_{L^2_vL^\infty_x}
\\&\lesssim \delta \sum_{1\le \ell\le 3}\norm{\nabla^\ell \{{\bf I-P_1}\}f}_\nu^2.
\end{split}
\end{equation}
Hence, we have
\begin{equation}\label{j12}
J_{12}\lesssim \delta\sum_{1\le \ell\le N}\norm{\nabla^\ell \{{\bf I-P_1}\}f}_\nu^2.
\end{equation}
Thus, we complete the estimate of the first term in the right hand side of \eqref{line 1 g}. Applying the same argument (exchange $f$ and $g$) to the
other term, we conclude that for $k=0,\dots,N-1$,
\begin{equation}\label{g non 11}
J_1\lesssim\delta\left(\norm{\left[\nabla^{k+1}f,\nabla^{k+1}g\right]}+\norm{\nabla^k\{{\bf I- P_2}\}g}_\nu+\sum_{1\le \ell\le
N}\norm{\left[\nabla^\ell \{{\bf I-P_1}\}f,\nabla^\ell \{{\bf I-P_2}\}g\right]}_\nu^2\right);
\end{equation}
and for $k=N$,
\begin{equation}\label{g non 12}
J_1\lesssim\delta\left(\norm{\left[\nabla^Nf,\nabla^Ng\right]}+\sum_{1\le \ell\le N}\norm{\left[\nabla^\ell \{{\bf I-P_1}\}f,\nabla^\ell \{{\bf
I-P_2}\}g\right]}_\nu^2\right).
\end{equation}
Similarly, we have that for $k=0,\dots,N-1$,
\begin{equation}\label{f non 11}
I_1:=(\partial^\gamma\Gamma(f,f),\partial^\gamma f)\lesssim\delta\left(\norm{\nabla^{k+1}f}+\norm{\nabla^k\{{\bf I- P_1}\}f}_\nu+\sum_{1\le \ell\le
N}\norm{\nabla^\ell \{{\bf I-P_1}\}f}_\nu^2\right);
\end{equation}
and for $k=N$,
\begin{equation}\label{f non 12}
I_1\lesssim\delta\left(\norm{\nabla^Nf}+\sum_{1\le \ell\le N}\norm{\nabla^\ell \{{\bf I-P_1}\}f}_\nu^2\right).
\end{equation}

Next, for the second term $J_2$ in the right hand side of \eqref{EL11 g}, we first have
\begin{equation}
\begin{split}
J_2&:=(\partial^\gamma(\nabla_x\Phi\cdot vf), \partial^\gamma g)=\sum_{\gamma_1\le\gamma}C_\gamma^{\gamma_1}(\partial^{\gamma_1}\nabla_x\Phi\cdot v
\partial^{\gamma-\gamma_1} f, \partial^\gamma g)\\&\lesssim\sum_{\gamma_1\le \gamma}\norm{|\nabla^{|\gamma_1|} \nabla_x\Phi||\nabla^{k-|\gamma_1|} f|_\nu
}\norm{\nabla^k g}_\nu.
\end{split}
\end{equation}
Applying the same arguments that for \eqref{444}, we obtain that for $k=0,\dots,N-1$,
\begin{equation}\label{g non 21}
J_2\lesssim\delta\left(\norm{\nabla^{k+1}f}+\norm{\nabla^{k+1}\nabla_x\Phi}+\norm{\nabla^k g}_\nu+\sum_{1\le \ell\le N}\norm{\nabla^\ell \{{\bf
I-P_1}\}f}_\nu^2\right);
\end{equation}
and for $k=N$,
\begin{equation}\label{g non 22}
J_2\lesssim\delta\left(\norm{\nabla^Nf}+\norm{\nabla^N\nabla_x\Phi}+\norm{\nabla^N g}_\nu+\sum_{1\le \ell\le N}\norm{\nabla^\ell \{{\bf
I-P_1}\}f}_\nu^2\right).
\end{equation}

For the second term $I_2$ in the right hand side of \eqref{EL11 f}, we again use the splitting to have
\begin{equation}
\begin{split}
I_2&:=(\partial^\gamma(\nabla_x\Phi\cdot vg), \partial^\gamma f)=\sum_{\gamma_1\le\gamma}C_\gamma^{\gamma_1}(\partial^{\gamma_1}\nabla_x\Phi\cdot v
\partial^{\gamma-\gamma_1} g, \partial^\gamma f)\\&=\sum_{\gamma_1\le\gamma}C_\gamma^{\gamma_1}\left\{(\partial^{\gamma_1}\nabla_x\Phi\cdot v
\partial^{\gamma-\gamma_1} g, \partial^\gamma {\bf P_1}f)+(\partial^{\gamma_1}\nabla_x\Phi\cdot v
\partial^{\gamma-\gamma_1}g, \partial^\gamma  \{{\bf I-P_1}\}f)\right\}
\\&:=I_{21}+I_{22}.
\end{split}
\end{equation}
Notice that the term $I_{22}$ can be bounded as that for \eqref{444} and we have that for $k=0,\dots,N-1$,
\begin{equation}\label{es i 221}
I_{22}\lesssim\delta\left(\norm{\nabla^{k+1}g}+\norm{\nabla^{k+1}\nabla_x\Phi}+\norm{\nabla^k \{{\bf I-P_1}\}f}_\nu+\sum_{1\le \ell\le
N}\norm{\nabla^\ell \{{\bf I-P_2}\}g}_\nu^2\right);
\end{equation}
and for $k=N$,
\begin{equation}\label{es i 222}
I_{22}\lesssim\delta\left(\norm{\nabla^Ng}+\norm{\nabla^N\nabla_x\Phi}+\norm{\nabla^N \{{\bf I-P_1}\}f}_\nu+\sum_{1\le \ell\le N}\norm{\nabla^\ell
\{{\bf I-P_2}\}g}_\nu^2\right).
\end{equation}
While for the first term $I_{21}$, we have
\begin{equation}\label{I_def}
I_{21}\lesssim \int_{\r3}|\nabla^{|\gamma_1|}\nabla_x\Phi| \left| \nabla^{k-|\gamma_1|}g\right|_2|\nabla^k f|_2\,dx.
\end{equation}
For $0\le k\le N-1$, by H\"older's inequality, Minkowski's integral inequality \eqref{min es} of Lemma \ref{Minkowski} and  Sobolev's inequality, we
have
\begin{equation}
\begin{split}
I_{21}&\lesssim\norm{\nabla^{|\gamma_1|}\nabla_x\Phi}_{L^3_x}\norm{\nabla^{k-|\gamma_1|}g}\norm{\nabla^k f}_{L^6_xL^2_v} \lesssim
\norm{\nabla^{|\gamma_1|}\nabla_x\Phi}_{L^3_x}\norm{\nabla^{k-|\gamma_1|}g}\norm{\nabla^k f}_{L^2_vL^6_x}
\\&\lesssim \norm{\nabla^{|\gamma_1|}\nabla_x\Phi}_{L^3_x}\norm{\nabla^{k-|\gamma_1|}g}\norm{\nabla^{k+1} f}
\end{split}
\end{equation}
If $\gamma_1=\gamma$, then in this case we have
\begin{equation}
\begin{split}
I_{21}\lesssim\norm{\nabla^k\nabla_x\Phi}_{L^3_x}\norm{g}\norm{\nabla^{k+1} f}\lesssim
\delta\left(\norm{\nabla^k\nabla_x\Phi}^2+\norm{\nabla^{k+1}\nabla_x\Phi}^2+\norm{\nabla^{k+1} f}^2\right).
\end{split}
\end{equation}
If $|\gamma_1|\le k-1$ (it is nothing if $k-1<0$), then in this case by Lemma \ref{1interpolation} and Lemma \ref{interpolation}, we have
\begin{equation}
\begin{split}
I_{21}&\lesssim
\norm{\nabla^{\alpha}\nabla_x\Phi}^{1-\frac{|\gamma_1|}{k}}\norm{\nabla^{k}\nabla_x\Phi}^{\frac{|\gamma_1|}{k}}\norm{g}^{\frac{|\gamma_1|}{k}}\norm{
\nabla^kg}^{1-\frac{|\gamma_1|}{k}}\norm{\nabla^{k+1} f}
\\&\lesssim \delta\left(\norm{\nabla^k\nabla_x\Phi}^2+\norm{\nabla^{k}g}^2+\norm{\nabla^{k+1} f}^2\right),
\end{split}
\end{equation}
where we have denoted $\alpha$ by
\begin{equation}
\begin{split}
&\frac{1}{3}-\frac{|\gamma_1|}{3}=\left(\frac{1}{2}-\frac{\alpha}{3}\right)\times\left(1-\frac{|\gamma_1|}{k}\right)+\left(\frac{1}{2}-\frac{k}{3}\right)\times\frac{|\gamma_1|}{k}
\\&\quad\Longrightarrow\alpha=\frac{k}{2(k-|\gamma_1|)}\le \frac{k}{2}\ \text{ since }|\gamma_1|\le k-1.
\end{split}
\end{equation}
Hence for $0\le k\le N-1$, we obtain
\begin{equation}\label{term11}
\begin{split}
I_{21}\lesssim \delta\left(\norm{\nabla^k\nabla_x\Phi}^2+\norm{\nabla^{k+1}\nabla_x\Phi}^2+\norm{\nabla^{k+1} f}^2+\norm{\nabla^{k}g}^2\right).
\end{split}
\end{equation}
Now for $k=N$, we may use the arguments as in \eqref{temple} to have
\begin{equation}\label{term12}
I_{21}\lesssim \delta\left(\norm{ \nabla^{N}g }^2+\norm{\nabla^N\nabla_x\Phi}^2+\norm{\nabla^{N}f}^2\right).
\end{equation}
Hence, in light of \eqref{es i 221}--\eqref{es i 222} and \eqref{term11}--\eqref{term12}, we may conclude that for $k=0,\dots,N-1$,
\begin{equation}\label{f non 21}
\begin{split}
I_{2}&\lesssim\delta\left(\norm{\left[\nabla^{k+1}f,\nabla^{k+1}g\right]}+\norm{\left[\nabla^k\nabla_x\Phi,\nabla^{k+1}\nabla_x\Phi\right]}^2+\norm{\nabla^{k}g}^2\right.
\\&\left.\qquad\ +\norm{\nabla^k \{{\bf I-P_1}\}f}_\nu+\sum_{1\le \ell\le N}\norm{\nabla^\ell \{{\bf I-P_2}\}g}_\nu^2\right);
\end{split}
\end{equation}
and for $k=N$,
\begin{equation}\label{f non 22}
I_{2}\lesssim\delta\left(\norm{\left[\nabla^{N}f,\nabla^{N}g\right]}^2+\norm{\nabla^N\nabla_x\Phi}+\norm{\nabla^N \{{\bf I-P_1}\}f}_\nu+\sum_{1\le
\ell\le N}\norm{\nabla^\ell \{{\bf I-P_2}\}g}_\nu^2\right).
\end{equation}

Now we turn to the third term $J_3$ in the right hand side of \eqref{EL11 g}. There is one worst case involving ($k+1$)-th derivative. We should
estimate this $(k+1)$-th order derivative term together with the similar term stemming from the third term $I_3$ in \eqref{EL11 f} to be canceled by
the integration by parts over $v$-variable. Hence, we estimate them together to have
\begin{equation}\label{line2}
\begin{split}
I_3+J_3&:=(\partial^\gamma(\nabla_x\Phi\cdot\nabla_vf), \partial^\gamma g)+(\partial^\gamma(\nabla_x\Phi\cdot\nabla_vg), \partial^\gamma f)
\\&=(\nabla_x\Phi\cdot\nabla_v\partial^\gamma f, \partial^\gamma g)+(\nabla_x\Phi\cdot\nabla_v \partial^\gamma g, \partial^\gamma f)
\\&\quad+\sum_{0\neq\gamma_1\le\gamma}C_\gamma^{\gamma_1}\left\{(\partial^{\gamma_1}\nabla_x\Phi\cdot\nabla_v \partial^{\gamma-\gamma_1} f, \partial^\gamma g)+(\partial^{\gamma_1}\nabla_x\Phi\cdot\nabla_v \partial^{\gamma-\gamma_1} g, \partial^\gamma f)
\right\}
\\&=\sum_{0\neq\gamma_1\le\gamma}C_\gamma^{\gamma_1}\left\{(\partial^{\gamma_1}\nabla_x\Phi\cdot\nabla_v \partial^{\gamma-\gamma_1} f, \partial^\gamma {\bf P_2}g)
+(\partial^{\gamma_1}\nabla_x\Phi\cdot\nabla_v \partial^{\gamma-\gamma_1} f, \partial^\gamma \{{\bf I-P_2}\}g) \right.
\\&\qquad\qquad\qquad\ \left.+(\partial^{\gamma_1}\nabla_x\Phi\cdot\nabla_v \partial^{\gamma-\gamma_1} g, \partial^\gamma {\bf P_1} f)
+(\partial^{\gamma_1}\nabla_x\Phi\cdot\nabla_v \partial^{\gamma-\gamma_1} g, \partial^\gamma \{{\bf I- P_1}\} f)\right\}.
\end{split}
\end{equation}
For the third term in the right hand side of \eqref{line2}, we integrate by parts in $v$ and use the fast that ${\bf P_1}f$ decay exponentially in
$v$ to have
\begin{equation}
\begin{split}
(\partial^{\gamma_1}\nabla_x\Phi\cdot\nabla_v \partial^{\gamma-\gamma_1} g, \partial^\gamma {\bf P_1} f)&=-(\partial^{\gamma_1}\nabla_x\Phi
\partial^{\gamma-\gamma_1} g, \nabla_v\partial^\gamma {\bf P_1}f)
\\&\lesssim \int_{\r3}|\nabla^{|\gamma_1|}\nabla_x\Phi| \left| \nabla^{k-|\gamma_1|}g\right|_2|\nabla^k f|_2\,dx.
\end{split}
\end{equation}
This term appeared in \eqref{I_def} that has already been bounded. Similarly for the first term,
\begin{equation}
\begin{split}
(\partial^{\gamma_1}\nabla_x\Phi\cdot\nabla_v \partial^{\gamma-\gamma_1} f, \partial^\gamma {\bf P_2}g)&=-(\partial^{\gamma_1}\nabla_x\Phi
\partial^{\gamma-\gamma_1} f, \nabla_v\partial^\gamma {\bf P_2}g)\\&\lesssim \int_{\r3}|\nabla^{|\gamma_1|}\nabla_x\Phi| \left| \nabla^{k-|\gamma_1|}f\right|_2|\nabla^k g|_2\,dx,
\end{split}
\end{equation}
This term can be bounded similarly as the first term in \eqref{444}. While for the remaining two terms in \eqref{line2}, note that we can only bound
the factors involving the velocity derivative by the energy (not dissipation; otherwise, our method would not work.), so we can not pursue as before to adjust the
index. Thus, we may bound, for instance the third term, if $1\le|\gamma_1|< N/2$,
\begin{equation}
\begin{split}
(\partial^{\gamma_1}\nabla_x\Phi\cdot\nabla_v \partial^{\gamma-\gamma_1} f, \partial^\gamma \{{\bf I- P_2}\} g) &\lesssim \norm{\nabla^{|\gamma_1|}
\nabla_x\Phi}_{L^\infty_x}\norm{\nabla_v \nabla^{k-|\gamma_1|} f}\norm{\nabla^k\{{\bf I- P_2}\}g}\\&\lesssim \delta\left(\sum_{2\le \ell \le
[\frac{N}{2}]+2}\norm{\nabla^\ell\nabla_x\Phi}^2+\norm{\nabla^k \{{\bf I-P_2}\}g}^2\right);
\end{split}
\end{equation}
and if $|\gamma_1|\ge N/2$
\begin{equation}
\begin{split}
(\partial^{\gamma_1}\nabla_x\Phi\cdot\nabla_v \partial^{\gamma-\gamma_1} f, \partial^\gamma \{{\bf I- P_2}\} g) &\lesssim
\norm{\nabla^{|\gamma_1|}\nabla_x\Phi}\norm{\nabla_v \nabla^{k-|\gamma_1|} f}_{L^\infty_xL^2_v}\norm{\nabla^k\{{\bf I- P_2}\}g}
\\&\lesssim
\delta\left( \sum_{[\frac{N}{2}]\le\ell\le N}\norm{\nabla^\ell\nabla_x\Phi}^2+\norm{\nabla^k \{{\bf I-P_2}\}g}^2\right);
\end{split}
\end{equation}
Hence, we may conclude that for $0\le k\le N-1$,
\begin{equation}\label{fg non 31}
\begin{split}
I_3+J_3&\lesssim \delta\left(\norm{\nabla^{k+1} f}^2+\norm{\nabla^{k}g}^2+\norm{\nabla^k\nabla_x\Phi}^2+\norm{\nabla^{k+1}\nabla_x\Phi}^2\right.
\\&\qquad\ \ \left.+\sum_{2\le
\ell \le N}\norm{\nabla^\ell\nabla_x\Phi}^2+\norm{\left[\nabla^k \{{\bf I-P_1}\}f,\nabla^k \{{\bf I-P_2}\}g\right]}^2\right);
\end{split}
\end{equation}
and for $k= N$,
\begin{equation}\label{fg non 32}
\begin{split}
&(\partial^\gamma(\nabla_x\Phi\cdot\nabla_vg), \partial^\gamma f)+(\partial^\gamma(\nabla_x\Phi\cdot\nabla_vf), \partial^\gamma g)
\\&\quad\lesssim\delta\left(\norm{\left[\nabla^Nf,\nabla^N g\right]}^2+\sum_{2\le \ell \le
N}\norm{\nabla^\ell\nabla_x\Phi}^2+\norm{\left[\nabla^N \{{\bf I-P_1}\}f,\nabla^N \{{\bf I-P_2}\}g\right]}_\nu^2\right).
\end{split}
\end{equation}

 Consequently, plugging the estimates \eqref{poisson estimate}--\eqref{micro est 2} into the left hand side of \eqref{EL11 f}--\eqref{EL11 g}
 and then bounding the right hand side by \eqref{g non 11}, \eqref{f non 11}, \eqref{g non 21}, \eqref{f non 21} and \eqref{fg non 31}
 for $0\le k\le N-1$, and bounding it by \eqref{g non 12}, \eqref{f non 12}, \eqref{g non 22}, \eqref{f non 22} and \eqref{fg non 32}
 for $k=N$, since $\delta$ is small, we obtain \eqref{spatial energy estimate 1} and \eqref{spatial energy estimate 2} respectively.
\end{proof}

Until to now, we may conclude our energy estimates of the pure spatial derivatives as follows. Let $0\le\ell\le N-1$ and assume the a priori
estimates that $\mathcal{E}_N(t)\le \delta$ for sufficiently small $\delta>0$. Then summing up the estimates \eqref{spatial energy estimate 1} of Lemma
\ref{lemma spatial energy} from $k=\ell$ to $N-1$ and  adding the resulting estimate with the estimate \eqref{spatial energy estimate 2}, by changing
the index, we obtain
\begin{equation}\label{energy estimate 1}
\begin{split}
&\frac{d}{dt}\sum_{\ell\le k \le N}\left(\norm{\left[\nabla^k f,\nabla^k g\right]}^2+\norm{\nabla^k \nabla_x\Phi}^2\right) +C_1\sum_{\ell\le k \le
N}\norm{\left[\nabla^k\{{\bf I-P_1}\}  f,\nabla^k\{{\bf I-P_2}\}  g\right]}_\nu^2  \\&\quad \le C_2\delta \left(\sum_{\ell+1\le k \le
N}\norm{\left[\nabla^{k} f,\nabla^{k} g\right]}^2+\sum_{\ell\le k \le N-1}\norm{\nabla^{k}{\bf P_2}g}^2+\sum_{\ell\le k \le
N}\norm{\nabla^k\nabla_x\Phi}^2\right.
\\&\qquad\qquad\ \ \left.+\sum_{2\le k \le N}\norm{\nabla^k\nabla_x\Phi}^2+\sum_{1\le k \le N}\norm{\left[\nabla^{k}\{{\bf I-
P_1}\}f,\nabla^{k}\{{\bf I- P_2}\}g\right]}_\nu^2\right).
\end{split}
\end{equation}
On the other hand, we sum up the estimates \eqref{f positive estimate}--\eqref{g positive estimate} of Lemma \ref{lemma positive} from $k=\ell$ to
$N-1$, by changing the index, to obtain, since $\delta$ is small,
\begin{equation}\label{energy estimate 2}
\begin{split}
&\frac{d}{dt}\sum_{\ell\le k\le N-1}\left(G^k_f+G^k_g\right) +C_3 \left(\sum_{\ell+1\le k\le N} \norm{\nabla^{k}  {\bf P_1 }f }^2+\sum_{\ell\le k\le
N} \norm{\nabla^{k} {\bf P_2 }g}^2 +\sum_{\ell\le k\le N+1} \|\nabla^{k} \nabla_x\Phi \|^2\right)
\\&\quad\le C_4\sum_{\ell\le k\le N}\norm{\left[\nabla^{k}\{{\bf I-P _1}\} f,\nabla^{k}\{{\bf I-P _2}\} g\right] }_\nu^2.
\end{split}
\end{equation}
Then, multiplying \eqref{energy estimate 2} by a small number $\beta>0$ and then adding the resulting inequality with \eqref{energy estimate 1}, we
obtain
\begin{equation}\label{energy estimate 3}
\begin{split}
&\frac{d}{dt}\left(\sum_{\ell\le k \le N}\left(\norm{\left[\nabla^k f,\nabla^k g\right]}^2+\norm{\nabla^k \nabla_x\Phi}^2\right)+\beta\sum_{\ell\le
k\le N-1}\left(G^k_f+G^k_g\right)\right)
\\&\quad+(C_1-C_4\beta)\sum_{\ell\le k\le N}\norm{\left[\nabla^{k}\{{\bf I-P _1}\} f,\nabla^{k}\{{\bf I-P _2}\} g\right] }_\nu^2
\\&\quad+C_3\beta \left(\sum_{\ell+1\le k\le N} \norm{\nabla^{k}  {\bf P_1 }f}^2+\sum_{\ell\le k\le N} \norm{\nabla^{k} {\bf
P_2 }g}^2 +\sum_{\ell\le k\le N+1} \norm{\nabla^{k} \nabla_x\Phi }^2\right)
\\&\quad \le C_2\delta \left(\sum_{\ell+1\le k \le N}\norm{\left[\nabla^{k}
f,\nabla^{k} g\right]}^2+\sum_{\ell\le k \le N-1}\norm{\nabla^{k}{\bf P_2}g}^2+\sum_{\ell\le k \le N}\norm{\nabla^k\nabla_x\Phi}^2\right.
\\&\qquad\qquad\ \ \left.+\sum_{2\le k \le N}\norm{\nabla^k\nabla_x\Phi}^2+\sum_{1\le k \le N}\norm{\left[\nabla^{k}\{{\bf I-
P_1}\}f,\nabla^{k}\{{\bf I- P_2}\}g\right]}_\nu^2\right).
\end{split}
\end{equation}

We define $\widetilde{\mathcal{E}}_\ell(t)$ to be  the expression under the time derivative in \eqref{energy estimate 3}. We may now fix $\beta$ to
be sufficiently small so that $(C_1-2C_4\beta)>0$ and that $\widetilde{\mathcal{E}}_\ell(t)$ is equivalent to
\begin{equation}\nonumber
\sum_{\ell\le k \le N}\norm{\left[\nabla^k f,\nabla^k g\right]}^2+\sum_{\ell\le k \le N+1}\norm{\nabla^k \nabla_x\Phi}^2,
\end{equation}
where we have used the estimates \eqref{G_fg^k estimate} and the Poisson estimate. On the other hand, since $\beta$ is fixed and $\delta$ is small,
we can then absorb the first three terms in the right hand side of \eqref{energy estimate 3} to deduce that for $\ell=0,\dots,N-1$, by adjusting the
constant in the definition of $\widetilde{\mathcal{E}}_\ell(t)$,
\begin{equation}\label{energy estimate 4}
\begin{split}
&\frac{d}{dt}\widetilde{\mathcal{E}}_\ell+\sum_{\ell\le k\le N}\|[\nabla^{k}\{{\bf I-P _1}\} f,\nabla^{k}\{{\bf I-P _2}\} g] \|_\nu^2
\\&\quad+\sum_{\ell+1\le k\le N} \|\nabla^{k}  {\bf P_1 }f \|^2+\sum_{\ell\le k\le N} \norm{\nabla^{k} {\bf
P_2 }g}^2 +\sum_{\ell\le k\le N+1} \|\nabla^{k} \nabla_x\Phi \|^2
\\&\qquad\le C_5\delta \left(\sum_{2\le k \le N+1}\norm{\nabla^k\nabla_x\Phi}^2+\sum_{1\le k \le N}\norm{[\nabla^{k}\{{\bf I- P_1}\}f,\nabla^{k}\{{\bf I- P_1}\}g]}_\nu^2\right).
\end{split}
\end{equation}
\smallskip\smallskip

Now we turn to the energy estimates on the spatial-velocity mixed derivatives of the solution. First notice that for the hydrodynamic part $\left[{\bf P_1}f
,{\bf P_2}g \right]$,
\begin{equation}\label{hydro bound}
\norm{\left[\partial^\gamma_\beta {\bf P_1}f,\partial^\gamma_\beta {\bf P_2}g\right]}\lesssim\norm{\left[\partial^\gamma {\bf P_1}f,\partial^\gamma
{\bf P_2}g\right]}
\end{equation}
which has been estimated in \eqref{energy estimate 4}, it suffices to estimate the remaining microscopic part
\begin{equation}
\left[\partial^\gamma_\beta \{{\bf I-P_1}\}f,\partial^\gamma_\beta \{{\bf I-P_2}\}g\right]\nonumber
\end{equation}
for $|\gamma|+|\beta|\le N$ with $|\beta|\ge 1$ (and hence $|\gamma|\le N-1$).
\begin{lemma}\label{lemma spatial velocity}
If $\mathcal{E}_N(t)\le \delta$, then we have
\begin{equation}\label{I-P11}
\begin{split}
&\frac{d}{dt}\sum_{|\gamma|+|\beta|\le N\atop |\beta|\ge1}\norm{\left[\partial^\gamma_\beta\{{\bf I-P_1 }\}f,\partial^\gamma_\beta\{{\bf I-P_2
}\}g\right]}^2 + C\sum_{|\gamma|+|\beta|\le N\atop |\beta|\ge1}\norm{\left[\partial^\gamma_\beta\{{\bf I-P_1 }\}f,\partial^\gamma_\beta\{{\bf I-P_2
}\}g\right]}_\nu^2
\\&\quad\lesssim \sum_{1\le \ell \le N}\norm{\left[\nabla^\ell f,\nabla^\ell g\right]}^2+\sum_{0\le \ell \le N-1}\norm{\nabla^\ell \nabla_x\Phi}^2
\\&\qquad+\sum_{0\le\ell\le N}\norm{\left[\nabla^\ell \{{\bf I-P_1 }\}f,\nabla^\ell \{{\bf I-P_2 }\}g\right]}_\nu^2
+\sqrt{\mathcal{E}_N}\mathcal{D}_N.
\end{split}
\end{equation}
\end{lemma}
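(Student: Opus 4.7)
The plan is to apply the mixed derivative $\partial^\gamma_\beta$ with $|\gamma|+|\beta|\le N$ and $|\beta|\ge 1$ (hence $|\gamma|\le N-1$) to the microscopic equations obtained by acting $\{\mathbf{I-P_1}\}$ on $\eqref{VPB_per}_1$ and $\{\mathbf{I-P_2}\}$ on $\eqref{VPB_per}_2$, take the $L^2_{x,v}$ inner products with $\partial^\gamma_\beta\{\mathbf{I-P_1}\}f$ and $\partial^\gamma_\beta\{\mathbf{I-P_2}\}g$ respectively, and then sum over all admissible multi-indices. The transport term $v\cdot\nabla_x$ integrates to zero by skew-symmetry, and the coercivity \eqref{coercive boltzmann} of $\mathcal{L}_1,\mathcal{L}_2$ yields the target dissipation $\|\partial^\gamma_\beta\{\mathbf{I-P_1}\}f\|_\nu^2+\|\partial^\gamma_\beta\{\mathbf{I-P_2}\}g\|_\nu^2$ modulo a velocity commutator $[\partial_\beta,\mathcal{L}_i]$, which is a standard Burnett-function bound controlled by $\nu$-weighted norms of $\partial^\gamma_{\beta'}\{\mathbf{I-P_i}\}\cdot$ with $|\beta'|<|\beta|$.

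The remaining linear contributions fall into three categories. First, the transport commutator $[\partial_\beta, v\cdot\nabla_x]$ produces terms of the form $\partial^{\gamma+e}_{\beta-e'}\{\mathbf{I-P_i}\}\cdot$ with $|\beta-e'|=|\beta|-1$, which remain within the admissible range and are absorbed inductively by the dissipation at lower $|\beta|$ levels. Second, the hydrodynamic coupling $v\cdot\nabla_x \mathbf{P_1}f-\mathbf{P_1}(v\cdot\nabla_x f)$ (and its analogue for $g$) contributes, after differentiation, Gaussian-in-$v$ factors multiplied by $\nabla_x\partial^\gamma[a,b,c,d]$, bounded by $\|\nabla^{|\gamma|+1}[f,g]\|^2$ with $|\gamma|+1$ ranging from $1$ to $N$; this accounts for the first sum on the right of \eqref{I-P11}. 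Third, the Poisson forcing $\nabla_x\Phi\cdot v\sqrt{\mu}$ in the $g$-equation contributes $\|\partial^\gamma\nabla_x\Phi\|$ with $|\gamma|\le N-1$, producing the $\sum_{0\le\ell\le N-1}\|\nabla^\ell\nabla_x\Phi\|^2$ term.

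The nonlinear terms $\{\mathbf{I-P_i}\}\mathfrak{N}_i$ are handled by distributing $\partial^\gamma_\beta$ across each bilinear piece $\Gamma(\cdot,\cdot)$, $\nabla_x\Phi\cdot vh$, and $\nabla_x\Phi\cdot\nabla_v h$, then applying the bilinear collision bound of Lemma \ref{nonlinearcol1}, Minkowski's integral inequality to interchange $\int_v$ and $\int_x$, and Gagliardo--Nirenberg interpolation. In sharp contrast to Lemma \ref{lemma spatial energy}, we are free here to simply let one factor absorb into the full dissipation $\mathcal{D}_N$: since all $\|\nabla^\ell[f,g]\|^2$ with $\ell\ge 1$, all velocity-weighted mixed-derivative microscopic norms, and all $\|\nabla^\ell\nabla_x\Phi\|^2$ are part of $\mathcal{D}_N$, every nonlinear contribution is bounded by $\sqrt{\mathcal{E}_N}\,\mathcal{D}_N$, matching the last term in \eqref{I-P11}.

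The principal obstacle is the induction bookkeeping: because both the transport commutator and the collision commutator trade one velocity derivative for one spatial derivative, the estimate cannot be closed at a single value of $|\beta|$. The resolution is to sum simultaneously over $1\le|\beta|\le N-|\gamma|$ with a decreasing multiplier at each successive level, so that the commutator outputs at level $|\beta|$ are absorbed by a fraction of the dissipation at level $|\beta|-1$; the base case $|\beta|=0$ is then accounted for by the pure spatial term $\sum_{0\le\ell\le N}\|[\nabla^\ell\{\mathbf{I-P_1}\}f,\nabla^\ell\{\mathbf{I-P_2}\}g]\|_\nu^2$ on the right of \eqref{I-P11}. Combining the linear, commutator, and nonlinear estimates and using the smallness of $\mathcal{E}_N\le\delta$ then yields \eqref{I-P11}.
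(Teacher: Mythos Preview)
Your outline follows essentially the same route as the paper---take mixed derivatives, use coercivity of $\mathcal{L}_i$ modulo lower-order velocity terms, handle the transport commutator inductively in $|\beta|$, and treat the linear hydrodynamic/Poisson pieces directly---and most of the bookkeeping is sound.  There is, however, one genuine gap in the nonlinear part that would prevent the estimate from closing.

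The claim that ``every nonlinear contribution is bounded by $\sqrt{\mathcal{E}_N}\,\mathcal{D}_N$'' fails for the top-order $\nabla_v$--Vlasov term.  When $|\gamma|+|\beta|=N$ and all of $\partial^\gamma_\beta$ falls on the second factor of $\nabla_x\Phi\cdot\nabla_v\{\mathbf{I-P_1}\}f$, you obtain
\[
\bigl(\nabla_x\Phi\cdot\nabla_v\partial^\gamma_\beta\{\mathbf{I-P_1}\}f,\ \partial^\gamma_\beta\{\mathbf{I-P_2}\}g\bigr).
\]
The factor $\nabla_v\partial^\gamma_\beta\{\mathbf{I-P_1}\}f$ carries $N+1$ total derivatives and therefore lies in \emph{neither} $\mathcal{E}_N$ nor $\mathcal{D}_N$; integrating by parts in $v$ only shifts the extra derivative onto $\partial^\gamma_\beta\{\mathbf{I-P_2}\}g$, which is equally bad.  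The paper resolves this by adding the analogous term from the $f$--equation,
\[
\bigl(\nabla_x\Phi\cdot\nabla_v\partial^\gamma_\beta\{\mathbf{I-P_2}\}g,\ \partial^\gamma_\beta\{\mathbf{I-P_1}\}f\bigr),
\]
so that the sum is a perfect $v$--derivative of the product and vanishes after integration in $v$.  This cross-cancellation between the $f$-- and $g$--equations is the key step; it is exactly parallel to the treatment of $I_3+J_3$ in Lemma~\ref{lemma spatial energy}, and your proposal must invoke it before the $\sqrt{\mathcal{E}_N}\,\mathcal{D}_N$ bound can be asserted.  For $|\gamma|+|\beta|\le N-1$ the extra velocity derivative stays within $\mathcal{D}_N$ and no cancellation is needed, so the issue is confined to the top order.
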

\begin{proof}
We take $\partial^\gamma_\beta$ (with $|\beta|=m\ge 1$) of the equations $\eqref{VPB_per}_1$--$\eqref{VPB_per}_2$ to get
\begin{eqnarray}\label{f 1}
&&\begin{split} &\partial_t\partial^\gamma_\beta\{{\bf I-P_1 }\}f + v\cdot\nabla_x\partial^\gamma_\beta\{{\bf I-P_1 }\} f  +
\partial^\gamma_\beta\mathcal{L}_1 \{{\bf  I-P_1 }\}f
\\&\qquad+\partial_t\partial^\gamma_\beta{\bf P_1 }f+v\cdot\nabla_x \partial^\gamma_\beta{\bf P_1}f +C_\beta^{\beta_1}\partial_{\beta_1}v
\cdot\nabla_x\partial^\gamma_{\beta-\beta_1} f
\\&\quad =  \partial^\gamma_\beta\left(\Gamma(f,f)+\frac{1}{2}\nabla_x\Phi\cdot vg-\nabla_x\Phi\cdot\nabla_vg \right),
 \end{split}
\\&&\label{g_1}
\begin{split}
&\partial_t\partial^\gamma_\beta\{{\bf I-P_2 }\}g + v\cdot\nabla_x\partial^\gamma_\beta\{{\bf I-P_2 }\} g  +  \partial^\gamma_\beta\mathcal{L}_2
\{{\bf  I-P_2 }\}g
\\&\qquad+\partial_t\partial^\gamma_\beta{\bf P_2 }g+v\cdot\nabla_x \partial^\gamma_\beta{\bf P_2}g +C_\beta^{\beta_1}\partial_{\beta_1}v
\cdot\nabla_x\partial^\gamma_{\beta-\beta_1} g-\partial^\gamma\nabla_x\Phi\cdot\partial_\beta( v\sqrt{\mu})
\\&\quad =  \partial^\gamma_\beta\left(\Gamma(g,f)+\frac{1}{2}\nabla_x\Phi\cdot vf-\nabla_x\Phi\cdot\nabla_vf \right).
 \end{split}
\end{eqnarray}
We illustrate only the estimate on $\{{\bf I-P_2 }\} g$ and the other one, $\{{\bf I-P_1 }\} f$, can be estimated in the same way. Taking the inner
product of \eqref{g_1} with $\partial^\gamma_\beta\{{\bf I-P_2 }\} g $, we obtain
\begin{equation}\label{I-P}
\begin{split}
&\frac{1}{2}\frac{d}{dt}\norm{\partial^\gamma_\beta\{{\bf I-P_2 }\}g}^2 + \left(\partial^\gamma_\beta\mathcal{L}_2 \{{\bf  I-P_2
}\}g,\partial^\gamma_\beta\{{\bf I-P_2 }\}g\right)
\\&\quad+\left(\partial_t\partial^\gamma_\beta{\bf P_2 }g+v\cdot\nabla_x \partial^\gamma_\beta{\bf P_2}g +C_\beta^{\beta_1}\partial_{\beta_1}v
\cdot\nabla_x\partial^\gamma_{\beta-\beta_1} g-\partial^\gamma\nabla_x\Phi\cdot\partial_\beta( v\sqrt{\mu}),\partial^\gamma_\beta\{{\bf I-P_2
}\}g\right)
\\&\quad =  \left(\partial^\gamma_\beta\left(\Gamma(g,f)+\frac{1}{2}\nabla_x\Phi\cdot vf-\nabla_x\Phi\cdot\nabla_vf\right),\partial^\gamma_\beta\{{\bf I-P_2 }\}g\right).
 \end{split}
\end{equation}

By the linear estimate \eqref{5} in Lemma \ref{linear c}, we know
\begin{equation}\label{vv1}
\left(\partial^\gamma_\beta\mathcal{L}_2 \{{\bf I-P_2}\}g ,\partial^\gamma_\beta \{{\bf I-P_2}\}g \right)\ge \frac{1}{2}\norm{\partial^\gamma_\beta
\{{\bf I-P_2}\}g}_\nu^2-C\norm{\partial^\gamma \{{\bf I-P_2}\}g}_\nu^2.
\end{equation}

We now estimate the second line in \eqref{I-P}. For the first two terms, by the local conservation laws $\eqref{local conservation laws}_4$ and Cauchy's inequality, we get
\begin{equation}\label{vv2}
\begin{split}
\left(\partial_t\partial^\gamma_\beta{\bf P_2 }g+v\cdot\nabla_x \partial^\gamma_\beta{\bf P_2}g ,\partial^\gamma_\beta\{{\bf I-P_2 }\}g\right) &
\lesssim\left(\norm{\partial_t\partial^\gamma d }+\norm{\nabla_x\partial^\gamma d }\right)\norm{\partial^\gamma_\beta \{{\bf I-P_2}\}g }
\\&  \le \frac{1}{12}\norm{\partial^\gamma_\beta \{{\bf
I-P_2}\}g }_\nu^2+C\norm{\nabla_x\partial^\gamma g }^2.
\end{split}
\end{equation}
For the third term, we use the splitting to have, since $|\beta-\beta_1|=m-1$,
\begin{equation}\label{vv3}
\begin{split}
&\left|\left(\partial^{\beta_1}v\cdot\nabla_x\partial^\gamma_{\beta-\beta_1}g ,\partial^\gamma_\beta \{{\bf I-P_2}\}g \right)\right|
\\&\quad\lesssim\left|\left(\partial^{\beta_1}v\cdot\nabla_x\partial^\gamma_{\beta-\beta_1}{\bf
P_2}g ,\partial^\gamma_\beta \{{\bf I-P_2}\}g \right)\right|+\left|\left(\partial^{\beta_1}v\cdot\nabla_x\partial^\gamma_{\beta-\beta_1}\{{\bf I-
P_2}\}g ,\partial^\gamma_\beta \{{\bf I-P_2}\}g \right)\right|
\\&\quad \le\frac{1}{12}\norm{\partial^\gamma_\beta \{{\bf
I-P_2}\}g }_\nu^2+ C\norm{\nabla_x\partial^\gamma g }^2 +C\sum_{|\gamma|+|\beta|\le N\atop|\beta|=m-1}\norm{\partial^\gamma_\beta \{{\bf I-P_2}\}g
}_\nu^2
\end{split}
\end{equation}
For the fourth term in the same line, we have
\begin{equation}\label{vv4}
\left|\left(\partial^\gamma\nabla_x\Phi\cdot\partial_\beta( v\sqrt{\mu}),\partial^\gamma_\beta\{{\bf I-P_2 }\}g\right)\right| \le
\frac{1}{12}\norm{\partial^\gamma_\beta\{{\bf I-P_2 }\}g}_\nu+C \norm{\partial^\gamma\nabla_x\Phi}.
\end{equation}

Now we turn to the third line in \eqref{I-P}. First, by Lemma 7 of \cite{S2006} (clearly, without taking the time derivatives), we have
\begin{equation}\label{vv5}
\left(\partial^\gamma_\beta\Gamma(g,f),\partial^\gamma_\beta\{{\bf I-P_2 }\}g\right)\lesssim\sqrt{\mathcal{E}_N}\mathcal{D}_N.
\end{equation}
Then it remains to estimate the last two terms related to the electric field. We do the splitting
\begin{equation}\label{electric field}
\begin{split}
&\left(\partial^\gamma_\beta\left(\frac{1}{2}\nabla_x\Phi\cdot vf-\nabla_x\Phi\cdot\nabla_vf\right),\partial^\gamma_\beta\{{\bf I-P_2 }\}g\right)
 \\&\quad=\left(\partial^\gamma_\beta\left(\frac{1}{2}\nabla_x\Phi\cdot v{\bf P_1}f-\nabla_x\Phi\cdot\nabla_v {\bf P_1}f\right),\partial^\gamma_\beta\{{\bf I-P_2 }\}g\right)
 \\&\qquad+\left(\partial^\gamma_\beta\left(\frac{1}{2}\nabla_x\Phi\cdot v \{{\bf I-P_1 }\} f-\nabla_x\Phi\cdot\nabla_v \{{\bf I-P_1 }\}f\right),\partial^\gamma_\beta\{{\bf I-P_2 }\}g\right).
\end{split}
\end{equation}
Since the hydrodynamic part is not affected by the velocity derivative and the $v$ factor as noted in \eqref{hydro bound}, hence the first term in
\eqref{electric field} can be bounded by $\sqrt{\mathcal{E}_N}\mathcal{D}_N$. The argument is similar as that in Lemma \ref{lemma spatial energy}, but
is a bit simpler since we do not adjust the index. The main concern is that there is one worst case involving ($N+1$)-th derivative in the second
term coming from the $\nabla_v$-Vlasov term. We should estimate this $(N+1)$-th order derivative term together with the similar term stemming from
the equation \eqref{f 1} to be canceled by the integration by parts:
\begin{equation}
\left(\partial^\gamma_\beta\left(\nabla_x\Phi\cdot\nabla_v \{{\bf I-P_1 }\}f\right),\partial^\gamma_\beta\{{\bf I-P_2 }\}g\right)
+\left(\partial^\gamma_\beta\left(\nabla_x\Phi\cdot\nabla_v \{{\bf I-P_1 }\}g\right),\partial^\gamma_\beta\{{\bf I-P_2 }\}f\right)=0.
\end{equation}
After this cancelation, we then can bound the second term in \eqref{electric field} by $\sqrt{\mathcal{E}_N}\mathcal{D}_N$. Plugging
these estimates and \eqref{vv1}--\eqref{vv5} into \eqref{I-P}, and doing the same estimates for \eqref{f 1} with respect to $\{{\bf I-P_1 }\} f$, summing up
$|\gamma|+|\beta|\le N$ with $|\beta|=m\ge1$, we obtain
\begin{equation}\label{I-Pm}
\begin{split}
&\frac{1}{2}\frac{d}{dt}\sum_{|\gamma|+|\beta|\le N\atop |\beta|=m}\norm{\left[\partial^\gamma_\beta\{{\bf I-P_1 }\}f,\partial^\gamma_\beta\{{\bf
I-P_2 }\}g\right]}^2 + \frac{1}{4}\sum_{|\gamma|+|\beta|\le N\atop |\beta|=m}\norm{\left[\partial^\gamma_\beta\{{\bf I-P_1
}\}f,\partial^\gamma_\beta\{{\bf I-P_2 }\}g\right]}_\nu^2
\\&\quad\lesssim \sum_{1\le \ell \le N-1}\norm{\left[\nabla^\ell f,\nabla^\ell g\right]}^2+\sum_{0\le \ell \le N-1}\norm{\nabla^\ell \nabla_x\Phi}^2+\sum_{0\le\ell\le N}\norm{\left[\nabla^\ell \{{\bf I-P_1 }\}f,\nabla^\ell \{{\bf I-P_2 }\}g\right]}_\nu^2
\\&\qquad+\sum_{|\gamma|+|\beta|\le N\atop |\beta|=m-1}\norm{\left[\partial^\gamma_\beta\{{\bf I-P_1 }\}f,\partial^\gamma_\beta\{{\bf I-P_2
}\}g\right]}_\nu^2+\sqrt{\mathcal{E}_N}\mathcal{D}_N.
\end{split}
\end{equation}
A simple recursive argument on \eqref{I-Pm} with the value of $m$ gives \eqref{I-P11}.\end{proof}

\subsection{Negative Sobolev estimates}\label{sec 2.2}
In this subsection, we will derive the evolution of the negative Sobolev norms of the solution. In order to estimate the nonlinear terms, we need to
restrict ourselves to $s\in (0,3/2)$. We will establish the following lemma.
\begin{lemma}\label{lemma H-s}
If $\mathcal{E}_N(t)\le \delta$, then for $s\in (0, 1/2]$, we have
\begin{equation}\label{H-s1}
\frac{d}{dt}\norm{\Lambda^{-s}f}^2+C\norm{\Lambda^{-s}\{{\bf I-P_1}\}f}_\nu^2\lesssim \left(\norm{\Lambda^{-s}f}+1\right)\mathcal{D}_N;
\end{equation}
and for $s\in (1/2, 3/2)$, we have
\begin{equation}\label{H-s2}
\frac{d}{dt}\norm{\Lambda^{-s}f}^2+C\norm{\Lambda^{-s}\{{\bf I-P}_1\}f}_\nu^2\lesssim \left(\norm{\Lambda^{-s}f}+1\right)\mathcal{D}_N+\norm{ f
}^{2s+1}\norm{ \nabla f   }^{3-2s}.
\end{equation}
\end{lemma}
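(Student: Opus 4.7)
\medskip

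The plan is to apply $\Lambda^{-s}$ to the equation $\eqref{VPB_per}_1$ and test against $\Lambda^{-s}f$ in $L^2_{x,v}$. Since $\Lambda^{-s}$ is a purely spatial Fourier multiplier, it commutes with the transport operator $v\cdot\nabla_x$ (whose contribution vanishes upon integration by parts in $x$) and with the linearized collision operator $\mathcal{L}_1$ as well as the projection $\mathbf{P_1}$. Thus the coercivity estimate \eqref{3} of Lemma \ref{linear c} carries over verbatim and produces the dissipation $\sigma_0\norm{\Lambda^{-s}\{\mathbf{I-P_1}\}f}_\nu^2$ on the left, leaving the identity
\begin{equation}\nonumber
\tfrac{1}{2}\tfrac{d}{dt}\norm{\Lambda^{-s}f}^2+\sigma_0\norm{\Lambda^{-s}\{\mathbf{I-P_1}\}f}_\nu^2\le \left(\Lambda^{-s}\Gamma(f,f),\Lambda^{-s}f\right)+\tfrac{1}{2}\bigl(\Lambda^{-s}(\nabla_x\Phi\cdot vg),\Lambda^{-s}f\bigr)-\bigl(\Lambda^{-s}(\nabla_x\Phi\cdot\nabla_vg),\Lambda^{-s}f\bigr).
\end{equation}
The heart of the proof is the estimation of these three nonlinear pairings, which I carry out in the same spirit as the classical negative-Sobolev scheme of \cite{GW}: use the Hardy--Littlewood--Sobolev inequality (Lemma \ref{1Riesz}) with the exponent $1/p=1/2+s/3$, then swap the order of integration via Minkowski (Lemma \ref{Minkowski}) so that the spatial $L^p_x$ norm sits outside the velocity $L^2_v$ norm, and finally apply H\"older in $x$ combined with Gagliardo--Nirenberg interpolation.

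For the $\Gamma(f,f)$ contribution I would exploit the orthogonality $\Gamma(f,f)\perp\mathcal{N}(\mathcal{L}_1)$ in $L^2_v$ to rewrite the inner product as $(\Lambda^{-s}\Gamma(f,f),\Lambda^{-s}\{\mathbf{I-P_1}\}f)$, so that Cauchy--Schwarz plus Young's inequality absorbs half of the dissipation on the left. The remaining factor $\norm{\nu^{-1/2}\Lambda^{-s}\Gamma(f,f)}^2$ is controlled by $\bigl\||\nu^{-1/2}\Gamma(f,f)|_2\bigr\|_{L^p_x}^2$, and the pointwise-in-$x$ bound $|\nu^{-1/2}\Gamma(f,f)|_2\lesssim |f|_2|f|_\nu$ from Lemma \ref{nonlinearcol1} reduces everything to $\bigl\||f|_2|f|_\nu\bigr\|_{L^p_x}^2$. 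Splitting by H\"older and using $\mathbf{P_1}$/$\{\mathbf{I-P_1}\}$ decompositions together with Gagliardo--Nirenberg, each $\nu$-weighted derivative can be recognized as part of $\mathcal{D}_N$ and each unweighted derivative (at order $\ge 1$) as part of $\mathcal{D}_N$ as well, leaving a factor $\sqrt{\mathcal{E}_N}\lesssim\sqrt\delta$ to make the whole thing $\lesssim\mathcal{D}_N$. For the two electric-field terms, which do not possess a microscopic-orthogonality structure, I pair directly with $\Lambda^{-s}f$, thereby picking up the factor $\norm{\Lambda^{-s}f}$ appearing on the right-hand side; the companion factor is bounded via HLS/Minkowski/H\"older exactly as above, using the Poisson equation $\Delta\Phi=d$ to trade a derivative of $\nabla_x\Phi$ for a copy of $d$ (hence of $g$) whenever needed, and using integration by parts in $v$ on the $\nabla_v g$ term to avoid an extra derivative.

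The split at $s=1/2$ originates from the H\"older choice for the $L^p_x$ estimate. When $s\in(0,1/2]$, the natural split $q_1=6/(1+2s),\ q_2=3$ keeps both exponents in the range where $L^{q_i}\hookleftarrow H^1$ by Sobolev/Gagliardo--Nirenberg, so all factors end up inside $\sqrt{\mathcal{E}_N}\,\mathcal{D}_N^{1/2}$ and the entire right-hand side can be written as $(\norm{\Lambda^{-s}f}+1)\mathcal{D}_N$, giving \eqref{H-s1}. When $s\in(1/2,3/2)$, however, one of the exponents (namely $3/s$) drops below $6$ and we are forced to use the full Gagliardo--Nirenberg interpolation $\|u\|_{L^{3/s}}\lesssim\|u\|^{s-1/2}\|\nabla u\|^{3/2-s}$; since the gradient factor is no longer all microscopic, only part of it can be absorbed by $\mathcal{D}_N$ and the residue aggregates into the extra term $\norm{f}^{2s+1}\norm{\nabla f}^{3-2s}$ in \eqref{H-s2}. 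Finally, the constraint $s<3/2$ is exactly the HLS requirement $p>1$; for $s\ge 3/2$ the whole scheme collapses.

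The main obstacle I anticipate is the bookkeeping in the nonlinear H\"older/Gagliardo--Nirenberg split: one must choose $(q_1,q_2)$ so that (i) the $\nu$-weighted factors distribute correctly between those that can be absorbed into $\mathcal{D}_N$ and those that sit in $\mathcal{E}_N$, and (ii) the $\mathbf{P_1}$/$\{\mathbf{I-P_1}\}$ splitting of $f$ leaves every unweighted $\mathbf{P_1}f$ factor carrying at least one spatial derivative (so that it lies in $\mathcal{D}_N$). The passage through $s=1/2$ is delicate precisely because the only legal choice of $q_2$ on the high side is $3/s$, which hits the Sobolev borderline $L^6$ exactly at $s=1/2$; for $s$ beyond that one has no choice but to pay with the non-dissipative gradient norm, producing the extra term in \eqref{H-s2}.
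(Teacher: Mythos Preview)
Your proposal is correct and follows essentially the same route as the paper: apply $\Lambda^{-s}$ to $\eqref{VPB_per}_1$, use coercivity \eqref{3}, handle $\Gamma(f,f)$ via orthogonality plus HLS/Minkowski/Lemma~\ref{nonlinearcol1}, and split the cases at $s=1/2$ according to whether $3/s\ge 6$. Two small deviations from the paper are worth flagging: the paper takes the single H\"older split $(q_1,q_2)=(3/s,2)$ throughout (not $(6/(1+2s),3)$), which places the $\nu$-weighted factor directly in $L^2_x$ and cleanly isolates the residual $\|f\|_{L^2_vL^{3/s}_x}\|f\|$ responsible for the extra term in \eqref{H-s2}; and the paper neither integrates by parts in $v$ on the $\nabla_v g$ term nor invokes the Poisson equation there---it simply bounds $\|\nabla_v g\|$ and $\|g\|_\nu$, both of which already lie in $\mathcal{D}_N$ for the two-species system.
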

\begin{proof}
Applying $\Lambda^{-s}$ to $\eqref{VPB_per}_1$ , and then taking the $L^2$ inner product with $\Lambda^{-s}f$, we have
\begin{equation}\label{Hs es 0}
\begin{split}
&\frac{1}{2}\frac{d}{dt}\norm{\Lambda^{-s}f}^2+ \sigma_0\norm{\Lambda^{-s}\{{\bf I-P_1}\} f}_\nu^2
\\&\quad \le \left(\Lambda^{-s}\Gamma(f,f),\Lambda^{-s}f\right)+\frac{1}{2}\left(\Lambda^{-s}\left(\nabla_x\Phi\cdot
vg\right), \Lambda^{-s} f\right) -\left(\Lambda^{-s}\left(\nabla_x\Phi\cdot\nabla_vg\right), \Lambda^{-s} f\right).
\end{split}
\end{equation}

We will estimate the right hand side of \eqref{Hs es 0} term by term. For the first term, by the collision invariant property, we have
\begin{equation}\label{Hs es 4}
\begin{split}
(\Lambda^{-s}\Gamma(f,f),\Lambda^{-s}f) &=\left(\Lambda^{-s}\Gamma(f,f), \Lambda^{-s}\{{\bf I-P}\}f\right)
\\&\le \norm{\Lambda^{-s}\left(\nu^{-\frac{1}{2}}\Gamma(f,f)\right)}\norm{\nu^{\frac{1}{2}}\Lambda^{-s}\{{\bf I-P}\}f}
\\&\le   C\norm{\Lambda^{-s}\left(\nu^{-\frac{1}{2}}\Gamma(f,f)\right)}^2+\frac{\sigma_0}{4}\norm{\Lambda^{-s}\{{\bf I-P}\}f}_\nu^2.
\end{split}
\end{equation}
To estimate the right hand side of \eqref{Hs es 4}, since $0<s<3/2$, we let $1<p<2$ to be with ${1}/{2}+{s}/{3}={1}/{p}$. By the estimate
\eqref{1Riesz es} of Riesz potential in Lemma \ref{1Riesz}, Minkowski's integral inequality \eqref{min es} of Lemma \ref{Minkowski}, and the estimate
\eqref{nonlineares3} (with $\eta=1/2$) of Lemma \ref{nonlinearcol1}, together with H\"older's inequality and the splitting $f={\bf P_1}f+\{{\bf I-P_1}\}f$, we obtain
\begin{equation}\label{Hs es 5}
\begin{split}
&\norm{\Lambda^{-s}\left(\nu^{-\frac{1}{2}}\Gamma(f,f)\right)} = \norm{\Lambda^{-s}\left(\nu^{-\frac{1}{2}}\Gamma(f,f)\right)}_{L_v^2L_x^2}^2
\\&\quad\lesssim \norm{\nu^{-\frac{1}{2}}\Gamma(f,f)}_{L_v^2L_x^{p}}
\le  \norm{\nu^{-\frac{1}{2}}\Gamma(f,f)}_{L_x^{p}L_v^2}
\\&\quad\lesssim \norm{|f|_2|f |_\nu}_{L_x^{p}}
\le  \norm{f}_{L_x^\frac{3}{s}L_v^2}\norm{f }_\nu
\\&\quad\le  \norm{f}_{L_v^2L_x^\frac{3}{s}}\left(\norm{\{{\bf I- P}\}f }_\nu+\norm{f}\right).
\end{split}
\end{equation}
We bound the first term in \eqref{Hs es 5} as, since $3/s>2$, by Sobolev's inequality,
\begin{equation}\label{Hs es 6}
\begin{split}
\norm{f}_{L_v^2L_x^\frac{3}{s}}\norm{\{{\bf I- P}\}f }_\nu&\lesssim \norm{f}_{L_v^2H_x^2} \norm{\{{\bf I-P}\}f }_\nu \lesssim \delta  \norm{\{{\bf
I-P}\}f }_\nu.
\end{split}
\end{equation}
While for the other term in \eqref{Hs es 5}, we shall separate the estimates according to the value of $s$. If $0< s\le 1/2$, then $3/s\ge 6$, we use
the Sobolev interpolation and Young's inequality to have
\begin{equation}\label{Hs es 7}
\norm{f}_{L_v^2L_x^\frac{3}{s}}\norm{f} \le  \norm{ \nabla f }^{1+s/2}\norm{ \nabla^2 f }^{1-s/2}\norm{ f } \lesssim \delta \left(  \norm{ \nabla f
}+ \norm{\nabla^2 f}\right);
\end{equation}
and if $s\in(1/2,3/2)$, then $2<3/s<6$, we use the (different) Sobolev interpolation and H\"older's inequality to have
\begin{equation}\label{Hs es 8}
\norm{f}_{L_v^2L_x^\frac{3}{s}}\norm{f}\lesssim \norm{ f } ^{s-1/2}\norm{\nabla f }^{3/2-s} \norm{ f }=\norm{ f } ^{s+1/2}\norm{\nabla f }^{3/2-s}.
\end{equation}

For the second term in \eqref{Hs es 0}, we do the splitting $f={\bf P_1}f+\{{\bf I-P_1}\}f$ to have, similarly as in \eqref{Hs es 5},
\begin{equation}\label{Hs es 2}
\begin{split}
&\left(\Lambda^{-s}\left(\nabla_x\Phi\cdot vg\right), \Lambda^{-s} f\right)
\\&\quad=\left(\Lambda^{-s}\left(\nabla_x\Phi\cdot vg\right), \Lambda^{-s}{\bf P_1}
f\right)+\left(\Lambda^{-s}\left(\nabla_x\Phi\cdot vg\right), \Lambda^{-s}\{{\bf I-P_1}\} f\right)
\\&\quad\lesssim
\norm{\Lambda^{-s}\left(|\nabla_x\Phi||g|_2\right)}\norm{\Lambda^{-s}f}+\norm{\Lambda^{-s}\left(|\nabla_x\Phi||g|_\nu\right)}\norm{\Lambda^{-s}\{{\bf
I-P_1}\} f}_\nu
\\&\quad\lesssim \norm{|\nabla_x\Phi||g|_2}_{L^p_x}\norm{\Lambda^{-s}f}+\norm{|\nabla_x\Phi||g|_\nu}_{L^p_x}\norm{\Lambda^{-s}\{{\bf
I-P_1}\} f}_\nu
\\&\quad\lesssim \norm{\nabla_x\Phi}_{L^{\frac{3}{s}}_x}\norm{g}\norm{\Lambda^{-s}f}+
\norm{\nabla_x\Phi}_{L^{\frac{3}{s}}_x}\norm{g}_\nu\norm{\Lambda^{-s}\{{\bf I-P_1}\} f}_\nu
\\&\quad\le  \norm{\nabla_x\Phi}_{H^2_x}\norm{g}\norm{\Lambda^{-s}f}+C\norm{\nabla_x\Phi}_{H^2_x}^2\norm{g}_\nu^2
+\frac{\sigma_0}{4}\norm{\Lambda^{-s}\{{\bf I-P_1}\} f}_\nu^2.
\end{split}
\end{equation}
For the last term in \eqref{Hs es 0}, we do not need the splitting and similarly we have
\begin{equation}\label{Hs es 3}
(\Lambda^{-s}\left(\nabla_x\Phi\cdot\nabla_vg\right), \Lambda^{-s} f)
 \lesssim\norm{\Lambda^{-s}\left(|\nabla_x\Phi||\nabla_vg|_2\right)}\norm{\Lambda^{-s}f}
\lesssim \norm{\nabla_x\Phi}_{H^2_x}\norm{\nabla_vg}\norm{\Lambda^{-s}f}.
\end{equation}

Consequently, in light of \eqref{Hs es 4}--\eqref{Hs es 3} and the definitions of $\mathcal{E}_N$ and $\mathcal{D}_N$, we deduce from \eqref{Hs es 0}
that \eqref{H-s1} holds for $s\in(0,1/2]$ and that \eqref{H-s2} holds for $s\in(1/2,3/2)$.
\end{proof}

\subsection{Proof of Theorems \ref{theorem1}}\label{sec 2.3}

In this subsection, we will combine all the energy estimates that we have derived in the previous two subsections and the interpolation between negative and positive Sobolev norms to prove Theorem \ref{theorem1}.
Note that for $\ell=0,1$, we can then absorb the  right hand side of \eqref{energy estimate 4} to obtain, by adjusting again the constant in the
definition of $\widetilde{\mathcal{E}}_\ell(t)$,
\begin{equation}\label{energy estimate 5}
\begin{split}
&\frac{d}{dt}\widetilde{\mathcal{E}}_\ell+\sum_{\ell\le k\le N}\norm{\left[\nabla^{k}\{{\bf I-P _1}\} f,\nabla^{k}\{{\bf I-P _2}\} g\right] }_\nu^2
\\&\quad+\sum_{\ell+1\le k\le N} \norm{\nabla^{k}  {\bf P_1 }f }^2+\sum_{\ell\le k\le N} \norm{\nabla^{k} {\bf
P_2 }g}^2 +\sum_{\ell\le k\le N+1} \norm{\nabla^{k} \nabla_x\Phi }^2\le 0\text{ for }\ell=0,1.
\end{split}
\end{equation}
In particular, taking $\ell=0$ in \eqref{energy estimate 5} and then multiplying by a large number $K>0$, adding the resulting estimate with
\eqref{I-P11}, we obtain
 \begin{equation}\label{111}
\begin{split}
\frac{d}{dt} \left(K\widetilde{\mathcal{E}}_0+\sum_{|\gamma|+|\beta|\le N\atop |\beta|\ge1}\norm{\left[\partial^\gamma_\beta\{{\bf I-P_1
}\}f,\partial^\gamma_\beta\{{\bf I-P_2 }\}g\right]}^2\right) + C_6\mathcal{D}_N\le 0. \end{split}
\end{equation}
We may define $C_6^{-1}$ times the expression under the time differentiation in \eqref{111} to be the instant energy functional $\mathcal{E}_N$, then
we have
\begin{equation}\label{energy di}
\frac{d}{dt}\mathcal{E}_N+ \mathcal{D}_N\le 0.
\end{equation}
Integrating \eqref{energy di} directly in time, we get \eqref{energy es}. Hence, if we assume $\mathcal{E}_N(0)\le \delta_0$ for a sufficiently small
$\delta_0>0$, then a standard continuity argument closes the a priori estimates that $\mathcal{E}_N(t)\le \delta$. Thus we can conclude the global
solution with the estimate \eqref{energy es} by the standard continuity argument of combining the local existence result and the a prior energy
estimates.

Now we turn to prove \eqref{H-sbound} and \eqref{decay0}. However, we are not able to prove them for all $s$ at this moment. We shall first prove
them for $s\in[0,1/2]$.

\begin{proof}[Proof of \eqref{H-sbound}--\eqref{decay0} for {$s\in[0,1/2]$}]
First, integrating in time the estimate \eqref{H-s1} of Lemma \ref{lemma H-s}, by the bound \eqref{energy es}, we obtain that for $s\in (0,1/2]$,
\begin{equation}
\begin{split}
\norm{\Lambda^{-s}f(t)}^2 &\le \norm{\Lambda^{-s}f_0}^2 +C\int_0^t \left(\norm{\Lambda^{-s}f(\tau)}+1\right)\mathcal{D}_N(\tau)\,d\tau
\\&\le C_0\left(1+\sup_{0\le \tau \le t}\norm{\Lambda^{-s}f(\tau)}\right).
\end{split}
\end{equation}
This together with \eqref{energy es} gives \eqref{H-sbound} for $s\in[0,1/2]$.

Next, we take $\ell=0,1$ in \eqref{energy estimate 5} and recall from the definition of the energy functional $\widetilde{\mathcal{E}}_\ell(t)$ that
there is only one exceptional term $\norm{\nabla^\ell {\bf P_1}f(t)}^2\le \norm{\nabla^\ell  f}^2$ that can not be bounded by the corresponding
dissipation in \eqref{energy estimate 5}. The key point is to do the interpolation between the negative and positive Sobolev norms by using Lemma
\ref{-sinte},
\begin{equation}\label{inter 1}
\norm{\nabla^\ell  f}\le C\norm{\nabla^{\ell+1} f}^{\frac{\ell+s}{\ell+1+s}}\norm{\Lambda^{-s} f}^{\frac{1}{\ell+1+s}}.
\end{equation}
This together with the bound \eqref{H-sbound} yields that there exists $C_0>0$ such that
\begin{equation}\label{inter 3}
\norm{\nabla^{\ell+1}  f}\ge C\norm{\nabla f}^{1+\frac{1}{\ell+s}}\norm{\Lambda^{-s} f}^{-\frac{1}{\ell+s}}\ge C_0\norm{\nabla^\ell  f}^{
1+\frac{1}{\ell+s}}.
\end{equation}
Hence, by \eqref{inter 3} and \eqref{energy es}, we deduce from \eqref{energy estimate 5} that
\begin{equation}\label{time inequality}
\frac{d}{dt}\widetilde{\mathcal{E}}_\ell+ C_0\left(\widetilde{\mathcal{E}}_\ell\right)^{1+\frac{1}{\ell+s}}\le 0\ \text{ for }\ \ell+s\neq0.
\end{equation}
Solving this inequality directly and \eqref{energy es} again, we obtain
\begin{equation}\label{ell}
\widetilde{\mathcal{E}}_\ell(t) \le \left({\widetilde{\mathcal{E}}_\ell(0)}^{-1/(\ell+s)}+C_0(\ell+s) t\right)^{-(\ell+s)}\le C_0 (1+ t)^{-(\ell+s)}\
\text{ for }\ \ell+s\neq0.
\end{equation}
Taking $\ell=1$ in \eqref{ell} together with \eqref{energy es}, we obtain \eqref{decay0}. Also note that there is only one exceptional term
$\norm{{\bf P_1}f(t)}^2$ in $\mathcal{E}_N$ that can not be bounded by $\mathcal{D}_N$, so using the same arguments leading to \eqref{time
inequality} with $\ell=0$, we can obtain
\begin{equation}
\mathcal{E}_N(t) \le \left(\mathcal{E}_N(0)^{-1/s}+C_0s t\right)^{-s}\le C_0 (1+ t)^{-s}\ \text{ for }\ s>0.
\end{equation}
This together with \eqref{energy es} again gives \eqref{decay00}.
\end{proof}

Now we can present the
\begin{proof}[Proof of \eqref{H-sbound}--\eqref{decay0} for {$s\in(1/2,3/2)$}]
Notice that the arguments for the case $s\in[0,1/2]$ can not be applied to this case. However, observing that we have $f_0\in L^2_v\dot{H}_x^{-1/2}$
since $L^2_v\dot{H}_x^{-s}\cap L^2_vL^2_x\subset L^2_v\dot{H}_x^{-s'}$ for any $s'\in [0,s]$, we then deduce from what we have proved for
\eqref{H-sbound}--\eqref{decay0} with $s=1/2$ that the following decay result holds:
\begin{equation}\label{decay11}
\sum_{\ell\le k\le N}\norm{\nabla^k f(t)}^2\le C_0(1+t)^{-(\ell+ \frac{1}{2})}\, \hbox{ for } \ell=0, 1.
\end{equation}
Hence, by \eqref{decay11} and \eqref{energy es}, we deduce from \eqref{H-s2} that for $s\in (1/2,3/2)$,
\begin{equation}\label{sssbound}
\begin{split}
\norm{\Lambda^{-s}f(t)}^2&\le \norm{\Lambda^{-s}f_0}^2+C \int_0^t\left(\left(\norm{\Lambda^{-s}f(\tau)}+1\right)\mathcal{D}_N(\tau)+\norm{ f(\tau)
}^{2s+1}\norm{ \nabla f (\tau)  }^{3-2s}\right)\,d\tau.
\\&\le C_0+C_0\sup_{0\le \tau \le t}\norm{\Lambda^{-s}f(\tau)}+C_0\int_0^t(1+\tau)^{-(5/2-s)}\,d\tau
\\&\le C_0\left(1+\sup_{0\le \tau \le t}\norm{\Lambda^{-s}f(\tau)}\right) .
\end{split}
\end{equation}
This proves \eqref{H-sbound} for $s\in (1/2,3/2)$, and we may then repeat the arguments leading to \eqref{decay00}--\eqref{decay0} for $s\in [0,1/2]$
to obtain \eqref{decay00}--\eqref{decay0} for $s\in (1/2,3/2)$. The proof of Theorem \ref{theorem1} is completed.
\end{proof}

\section{Weighted energy estimates and proof of Theorem \ref{theorem2}}\label{sec 3}

\subsection{Weighted energy estimates}\label{3.1}
In this subsection, we will derive the weighted energy estimates on the spatial derivatives
of the solution, and then with the help of these weighted norms we will
derive some further energy estimates compared to the basic energy estimates derived in Section \ref{sec 2.1}. The following lemma provides the weighted energy evolution of $\left[\{{\bf I-P_1}\}f,\{{\bf I-P_2}\}g\right]$.
\begin{lemma}\label{lemma micro 11}
If $\mathcal{E}_N(t)\le \delta$, then for $k=0,\dots,N-1$, we have
\begin{equation}\label{micro estimate 000}
\frac{d}{dt}\norm{\left[\{{\bf I-P_1}\}f,\{{\bf I-P_2}\}g\right]}_\nu^2 +C\norm{\left[\nu\{{\bf I-P_1}\}f,\nu\{{\bf I-P_2}\}g\right] }^2\lesssim
\mathcal{D}_N
\end{equation}
and
\begin{equation}\label{micro estimate 11}
\begin{split}
&\frac{d}{dt}\sum_{1\le k\le N-1}\norm{\left[\nabla^k\{{\bf I-P_1}\}f,\nabla^k\{{\bf I-P_2}\}g\right]}_\nu^2
\\&\quad+C\sum_{1\le k\le
N-1}\norm{\left[\nu\nabla^k\{{\bf I-P_1}\}f,\nu\nabla^k\{{\bf I-P_2}\}g\right] }^2\lesssim  \mathcal{D}_N.
\end{split}
\end{equation}
\end{lemma}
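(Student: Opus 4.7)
The plan is to derive the microscopic equations for $\{{\bf I-P_1}\}f$ and $\{{\bf I-P_2}\}g$, apply $\partial^\gamma$ with $|\gamma|=k$, and then take the $L^2_{x,v}$ inner product with the $\nu$-weighted test functions $\nu\partial^\gamma\{{\bf I-P_1}\}f$ and $\nu\partial^\gamma\{{\bf I-P_2}\}g$ respectively. Testing against the extra factor $\nu$ is exactly what upgrades the dissipation from $\|\cdot\|_\nu=\|\nu^{1/2}\cdot\|$ to the stronger $\|\nu\,\cdot\|$ appearing on the left-hand side of \eqref{micro estimate 000}--\eqref{micro estimate 11}. The microscopic equations are obtained by applying ${\bf I-P_1}$ (resp.\ ${\bf I-P_2}$) to $\eqref{VPB_per}_1$ (resp.\ $\eqref{VPB_per}_2$); using $\mathcal{L}_i{\bf P}_i=0$ and the orthogonality $({\bf I-P}_i)\mathcal{L}_i=\mathcal{L}_i$, they take the schematic form
\[
\partial_t\{{\bf I-P}_i\}h+v\cdot\nabla_x\{{\bf I-P}_i\}h+\mathcal{L}_i\{{\bf I-P}_i\}h=\{{\bf I-P}_i\}\mathfrak{N}_i+\text{coupling},
\]
where the coupling involves $v\cdot\nabla_x{\bf P}_i h$, ${\bf P}_i(v\cdot\nabla_x h)$, and, in the $g$-equation, the Poisson term $\nabla_x\Phi\cdot v\sqrt{\mu}$.

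After applying $\partial^\gamma$ and pairing with $\nu\partial^\gamma\{{\bf I-P}_i\}h$, the streaming term vanishes by integration by parts in $x$ because $\nu(v)$ is $x$-independent. The collision term yields the weighted coercivity via the standard splitting $\mathcal{L}_i=\nu(v)I-K_i$ with $K_i$ a smoothing operator: since $K_i$ is bounded on $L^2_v$, Cauchy--Schwarz gives
\[
(\mathcal{L}_i w,\nu w)\ge (1-\varepsilon)\|\nu w\|^2-C_\varepsilon\|w\|^2,
\]
and for $w=\partial^\gamma\{{\bf I-P}_i\}h$ with $|\gamma|\le N-1$ the subtracted term obeys $\|w\|^2\le\|w\|_\nu^2\le\mathcal{D}_N$ and is absorbed. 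The hydrodynamic coupling terms $v\cdot\nabla_x\partial^\gamma{\bf P}_i h$, ${\bf P}_i(v\cdot\nabla_x \partial^\gamma h)$, and $\partial^\gamma\nabla_x\Phi\cdot v\sqrt{\mu}$ are dominated after Cauchy--Schwarz by $\varepsilon\|\nu w\|^2+C_\varepsilon\|\nabla^{k+1}[{\bf P}_1f,{\bf P}_2g]\|^2+C\|\partial^\gamma\nabla_x\Phi\|^2$, each of which sits inside $\mathcal{D}_N$ so long as $k+1\le N$, matching precisely the range $0\le k\le N-1$ of the lemma.

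For the nonlinear contribution I will use the weighted bilinear estimate for $\Gamma$ from \cite{S2006} together with Leibniz to obtain $|(\partial^\gamma\Gamma(h,h'),\nu\partial^\gamma\{{\bf I-P}_i\}h)|\lesssim\sqrt{\mathcal{E}_N}\,\mathcal{D}_N$, and for the electric-field nonlinearities $\tfrac12\nabla_x\Phi\cdot v h'$ and $\nabla_x\Phi\cdot\nabla_v h'$ I will split $h'={\bf P}_{i'}h'+\{{\bf I-P}_{i'}\}h'$ and combine Sobolev embedding in $x$ with Minkowski's inequality in $v$, as in Lemma \ref{lemma spatial energy}, to bound them by $\sqrt{\mathcal{E}_N}\,\mathcal{D}_N$ as well. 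Since $\mathcal{E}_N\le\delta$ is small, all these nonlinear contributions are absorbed into the RHS $\lesssim\mathcal{D}_N$. Summation over $|\gamma|=0$ gives \eqref{micro estimate 000}, and over $1\le|\gamma|\le N-1$ gives \eqref{micro estimate 11}. The most delicate step will be the $\nabla_v$ Vlasov nonlinearity $\nabla_x\Phi\cdot\nabla_v h'$ paired with the $\nu$-weighted test function: integration by parts in $v$ would move a derivative onto $\nu$ and onto $K_i$-type kernels and produce growth in $v$, so the argument must keep $\nabla_v$ on the original factor and place $\nabla_x\Phi$ in $L^\infty_x$ via Sobolev, while carefully tracking the small $\varepsilon\|\nu w\|^2$ contributions so they can all be absorbed into the left-hand side after the sum over $\gamma$ is taken.
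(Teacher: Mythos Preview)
Your approach is essentially the paper's: test the $\partial^\gamma$ equation against $\nu\partial^\gamma\{{\bf I-P}_i\}h$, use the weighted coercivity $(\mathcal{L}_i w,\nu w)\ge\tfrac12\|\nu w\|^2-C\|w\|_\nu^2$ (this is exactly \eqref{4} of Lemma~\ref{linear c}), and bound the hydrodynamic coupling and Poisson terms by $\mathcal{D}_N$. The only cosmetic difference is that the paper does not project first: it applies $\partial^\gamma$ to the full equation and tests against $\nu\partial^\gamma\{{\bf I-P_2}\}g$, so a cross term $(\partial_t\partial^\gamma{\bf P_2}g,\nu\partial^\gamma\{{\bf I-P_2}\}g)$ appears and is handled via the conservation law $\partial_t d=-\nabla_x\cdot\mathcal{D}(\{{\bf I-P_2}\}g)$. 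Your commutator formulation avoids that step; both routes work.

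There is, however, a genuine imprecision in your nonlinear estimate. The claim $|(\partial^\gamma\Gamma(h,h'),\nu\partial^\gamma\{{\bf I-P}_i\}h)|\lesssim\sqrt{\mathcal{E}_N}\,\mathcal{D}_N$ is not available: Lemma~7 of \cite{S2006} treats the \emph{unweighted} pairing, and there is no bound on $|\nu^{1/2}\Gamma(h_1,h_2)|_2$ in terms of $|\cdot|_\nu$ norms of the inputs. What one actually has (via \eqref{nonlineares3} with $\eta=0$) is $|\Gamma(h_1,h_2)|_2\lesssim|\nu h_1|_2|h_2|_2+|h_1|_2|\nu h_2|_2$, so after splitting off the hydrodynamic parts the bound reads
\[
\delta\Bigl(\|\nabla^{k+1}[f,g]\|^2+\|\nu\nabla^k\{{\bf I-P}_i\}h\|^2+\sum_{1\le\ell\le N-1}\|[\nu\nabla^\ell\{{\bf I-P_1}\}f,\nu\nabla^\ell\{{\bf I-P_2}\}g]\|^2\Bigr),
\]
i.e.\ the $\nu$-weighted (not $\nu^{1/2}$-weighted) microscopic norms at \emph{all} intermediate levels $1\le\ell\le N-1$ appear on the right. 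These are not in $\mathcal{D}_N$; they are precisely the new dissipation on the left of \eqref{micro estimate 11}. This is why the lemma is stated as a sum over $1\le k\le N-1$: only after summing can the $\delta$-small $\nu$-weighted sum be absorbed. The same remark applies to the $\nabla_x\Phi\cdot v f$ nonlinearity. Your closing sentence about absorbing $\varepsilon\|\nu w\|^2$ terms is the right instinct, but it must be applied to the nonlinear terms as well, and across all $k$ simultaneously---not term by term as your bound $\sqrt{\mathcal{E}_N}\,\mathcal{D}_N$ would suggest.
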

\begin{proof}
We only illustrate the estimate on $\{{\bf I-P_2 }\} g$. Applying $\partial^\gamma$ (with $0\le |\gamma|=k\le N-1$) to $\eqref{VPB_per}_2$ and then
taking the inner product with $\nu\partial^\gamma\{{\bf I-P_2 }\} g $, we obtain
\begin{equation}\label{I-P111}
\begin{split}
&\frac{1}{2}\frac{d}{dt}\norm{\partial^\gamma\{{\bf I-P_2 }\}g}_\nu^2 + \left(\partial^\gamma\mathcal{L}_2 \{{\bf  I-P_2
}\}g,\nu\partial^\gamma\{{\bf I-P_2 }\}g\right)
\\&\quad+\left(\partial_t\partial^\gamma{\bf P_2 }g+v\cdot\nabla_x \partial^\gamma{\bf P_2}g
-\partial^\gamma\nabla_x\Phi\cdot  v\sqrt{\mu},\nu\partial^\gamma \{{\bf I-P_2 }\}g\right)
\\&\quad =  \left(\partial^\gamma \left(\Gamma(g,f)+\frac{1}{2}\nabla_x\Phi\cdot vf-\nabla_x\Phi\cdot\nabla_vf\right),\nu\partial^\gamma \{{\bf I-P_2 }\}g\right).
 \end{split}
\end{equation}

By the linear estimate \eqref{4} in Lemma \ref{linear c}, we know
\begin{equation}
\left(\mathcal{L}_2 \partial^\gamma\{{\bf I-P_2}\}g ,\nu\partial^\gamma \{{\bf I-P_2}\}g \right)\ge \frac{1}{2}\norm{\nu\partial^\gamma \{{\bf
I-P_2}\}g}^2-C\norm{\partial^\gamma \{{\bf I-P_2}\}g}_\nu^2.
\end{equation}
The second line in \eqref{I-P111} can be bounded by, as in Lemma \ref{lemma spatial velocity},
\begin{equation}
 \frac{1}{4}\norm{\partial^\gamma \{{\bf I-P_2}\}g }_\nu^2+C\norm{\nabla_x\partial^\gamma g }^2+C\norm{\partial^\gamma \nabla_x\Phi }^2.
\end{equation}

Now we turn to the third line in \eqref{I-P111}. First, by the estimate \eqref{nonlineares3} (with $\eta=0$) of Lemma \ref{nonlinearcol1} and applying the similar argument that for $J_1$, we have
\begin{equation}
\begin{split}
&\left(\partial^\gamma\Gamma(g,f),\nu\partial^\gamma\{{\bf I-P_2 }\}g\right)=\sum_{\gamma_1\le \gamma}C_\gamma^{\gamma_1}(\Gamma(\partial^{\gamma_1}
g,
\partial^{\gamma-\gamma_1} f),\nu\partial^\gamma \{{\bf I-P_2}\}g)
\\&\quad\lesssim\sum_{\gamma_1\le \gamma}\norm{\Gamma(\partial^{\gamma_1} g,
\partial^{\gamma-\gamma_1} f)}\norm{\nu\partial^\gamma \{{\bf I-P_2}\}g}
\\&\quad\lesssim\sum_{\gamma_1\le \gamma}\norm{|\nabla^{|\gamma_1|} g|_2|\nu\nabla^{k-|\gamma_1|} f|_2
+|\nu\nabla^{|\gamma_1|} g|_2|\nabla^{k-|\gamma_1|} f|_2}\norm{\nu\nabla^k\{{\bf I- P_2}\}g}
\\&\quad\lesssim
\delta\left(\norm{\left[\nabla^{k+1}f,\nabla^{k+1}g\right]}+\norm{\nu\nabla^k\{{\bf I- P_2}\}g}+\sum_{1\le \ell\le N-1}\norm{\left[\nu\nabla^\ell
\{{\bf I-P_1}\}f,\nu\nabla^\ell \{{\bf I-P_2}\}g\right]}^2\right).
\end{split}
\end{equation}
Comparing this with \eqref{g non 11}, the only difference is that we replace the $\nu^{1/2}$-weighted norm by the $\nu$-weighted norm. This
observation is also valid for the estimates of the last two term in the third line in \eqref{I-P111}. Hence, we may easily complete the estimates for
$\{{\bf I-P_2 }\} g$ with this repalcement. Applying the similar argument and the observation to $\{{\bf I-P_1 }\} f$, and summing over $|\gamma|=k$,
we get
\begin{equation}\label{micro estimate 1111}
\begin{split}
&\frac{1}{2}\frac{d}{dt}\norm{\left[\nabla^k\{{\bf I-P_1}\}f,\nabla^k\{{\bf I-P_2}\}g\right]}_\nu^2+\frac{1}{4}\norm{\left[\nu\nabla^k\{{\bf
I-P_1}\}f,\nu\nabla^k\{{\bf I-P_2}\}g\right] }^2
\\&\quad\lesssim   \norm{\left[\nabla^{k+1} f,\nabla^{k+1} g\right]}^2+\norm{\nabla^{k}g}_\nu^2
+\norm{\left[\nabla^k\nabla_x\Phi,\nabla^{k+1}\nabla_x\Phi\right]}^2+\sum_{2\le \ell \le N-1}\norm{\nabla^\ell\nabla_x\Phi}^2
\\&\qquad+\delta\sum_{1\le \ell \le N-1}\norm{\left[\nu\nabla^{\ell}\{{\bf I- P_1}\}f,\nu\nabla^{\ell}\{{\bf I-
P_2}\}g\right]}^2.
\end{split}
\end{equation}
Summing the above up $k$ from $1$ to $N-1$, by the definition of $\mathcal{D}_N$, since $\delta$ is small, we obtain \eqref{micro estimate 11}. The estimate \eqref{micro estimate 000}
follows similarly and we omit the details.
\end{proof}

By \eqref{micro estimate 000}, we know that if $\mathcal{E}_N(0)+\norm{[f_0,g_0]}_\nu^2$ and is small, then $\mathcal{E}_N(t)+\norm{[f(t),g(t)]}_\nu^2$ is small. With the help of this
weighted bound, we can improve the energy estimates derived in Section \ref{sec 2.1}.
\begin{lemma}\label{lemma micro 22}
If $\mathcal{E}_N(t)+\norm{\left[f(t),g(t)\right]}_\nu^2\le \delta$, then for $k=0,\dots, N-1$, we have
\begin{equation}\label{improved energy estimate 1}
\begin{split}
&\frac{d}{dt}\left(\norm{\left[\nabla^k f,\nabla^k g\right]}^2+\norm{\nabla^k \nabla_x\Phi}^2\right) +C\norm{\left[\nabla^k\{{\bf I-P_1}\}
f,\nabla^k\{{\bf I-P_2}\} g\right]}_\nu^2
\\&\quad \lesssim \delta \left(\norm{\left[\nabla^{k+1} f,\nabla^{k+1} g\right]}^2+\norm{\nabla^{k}{\bf P_2}g}^2
+\norm{\left[\nabla^k\nabla_x\Phi,\nabla^{k+1}\nabla_x\Phi\right]}^2+\sum_{2\le \ell \le N}\norm{\nabla^\ell\nabla_x\Phi}^2\right);
\end{split}
\end{equation}
and for $k=N$, we have
\begin{equation}\label{improved energy estimate 2}
\begin{split}
&\frac{d}{dt}\left(\norm{\left[\nabla^N f,\nabla^N g\right]}^2+\norm{\nabla^N \nabla_x\Phi}^2\right) +C\norm{\left[\nabla^N\{{\bf I-P_1}\}
f,\nabla^N\{{\bf I-P_2}\} g\right]}_\nu^2
\\&\quad\lesssim\delta \left(\norm{\left[\nabla^N f,\nabla^N g\right]}^2+\sum_{2\le \ell \le N}\norm{\nabla^\ell\nabla_x\Phi}^2\right).
\end{split}
\end{equation}
\end{lemma}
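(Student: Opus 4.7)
My plan is to revisit the proof of Lemma~\ref{lemma spatial energy} line by line. The energy identities \eqref{EL11 f}--\eqref{EL11 g}, the Poisson calculation \eqref{poisson estimate}, and the coercivity bounds \eqref{micro est 1}--\eqref{micro est 2} carry over unchanged and already produce the left-hand sides of \eqref{improved energy estimate 1}--\eqref{improved energy estimate 2}. The sole task is to rebound the nonlinear terms $I_1,\dots,I_3,\,J_1,\dots,J_3$ in such a way that the summation $\sum_{1\le \ell\le N}\|[\nabla^\ell\{{\bf I-P_1}\}f,\nabla^\ell\{{\bf I-P_2}\}g]\|_\nu^2$ appearing on the right-hand side of \eqref{spatial energy estimate 1}--\eqref{spatial energy estimate 2} is eliminated.

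The only source of that offending sum was the sub-estimate $J_{12}$ in \eqref{444} (and its $f\leftrightarrow g$ mirror), in which a $\nu$-weighted factor of $\{{\bf I-P_1}\}f$ carrying at most one spatial derivative had to be placed into $L^\infty_x$ by a Sobolev embedding, producing the residue $\sqrt{\mathcal E_N}\sum_{1\le \ell\le 3}\|\nabla^\ell\{{\bf I-P_1}\}f\|_\nu$ that generated the problematic sum. The plan is to replace that Sobolev step by an interpolation anchored at the \emph{zero-derivative} weighted norm, which is the new ingredient supplied by the hypothesis $\|[f,g]\|_\nu\le\sqrt{\delta}$. Integrating by parts in $x$ against the weight $\nu(v)$ gives
\[
\|\nabla h\|_\nu^2 = -(\nu\,\Delta h,\,h) \le \|h\|_\nu\,\|\nabla^2 h\|_\nu,
\]
and iterating this yields the log-convexity bound $\|\nabla^\ell h\|_\nu\le \|h\|_\nu^{1-\ell/k}\|\nabla^k h\|_\nu^{\ell/k}$ for every $0\le \ell\le k$. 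Applied with $h\in\{\{{\bf I-P_1}\}f,\{{\bf I-P_2}\}g\}$ and the bound $\|h\|_\nu\le\sqrt{\delta}$, each intermediate $\|\nabla^\ell h\|_\nu$ is converted to $\le\delta^{(k-\ell)/(2k)}\|\nabla^k h\|_\nu^{\ell/k}$. Substituting into $J_{12}$ and invoking Young's inequality with exponents $p=2k/\ell$ and $q=2k/(2k-\ell)$, the $\|\nabla^k h\|_\nu^2$-part is absorbed into the LHS dissipation $C\|\nabla^k\{{\bf I-P_1}\}f\|_\nu^2$ at level $k$, while the remaining piece takes the form $\delta\cdot(\text{non-weighted Sobolev norm})^2$ compatible with the right-hand sides of \eqref{improved energy estimate 1}--\eqref{improved energy estimate 2}. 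The sub-case $|\gamma_1|\le k-2$ of $J_{12}$ is handled by the same interpolation with index $k-|\gamma_1|$ in place of $\ell$; the $f\leftrightarrow g$ symmetric bound uses $\|\{{\bf I-P_2}\}g\|_\nu\le\sqrt{\delta}$ in parallel; the case $k=N$ of \eqref{improved energy estimate 2} is identical since no derivative of order exceeding $N$ ever enters the analysis.

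The main obstacle is the delicate bookkeeping of the Young splittings across the various sub-cases $(k,|\gamma_1|,\ell)$: both the interpolation exponent $(k-\ell)/(2k)$ and the conjugate Young exponent $2k/(2k-\ell)$ depend nonlinearly on the indices, and one must verify in each sub-case that the $\nu$-weighted residual indeed lands at level $k$ (so as to be absorbable into the LHS dissipation) and that the non-weighted residual is of the specific form permitted on the right-hand sides of \eqref{improved energy estimate 1}--\eqref{improved energy estimate 2}. A careful sub-case analysis patterned after the proof of Lemma~\ref{lemma spatial energy}, with the weighted bound systematically inserted at the zero-derivative end of every interpolation, then completes the argument.
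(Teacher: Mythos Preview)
Your diagnosis is exactly right: the only place where the proof of Lemma~\ref{lemma spatial energy} must be revisited is the sub-term $J_{12}$ (and its mirrors inside $I_1,I_2,J_2$), and the new ingredient is precisely the zero-derivative weighted bound $\|[f,g]\|_\nu\le\sqrt{\delta}$ used as an interpolation anchor. This is also what the paper does.

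The gap is in the interpolation tool you invoke. The log-convexity inequality $\|\nabla^{\ell}h\|_\nu\le\|h\|_\nu^{1-\ell/k}\|\nabla^{k}h\|_\nu^{\ell/k}$ lives in $L^2_x$, so to apply it to the $\nu$-weighted factor in $J_{12}$ you are forced to place the \emph{non-weighted} factor $|\nabla^{|\gamma_1|}g|_2$ in $L^\infty_x$. Sobolev embedding then demands $|\gamma_1|+2$ spatial derivatives of $g$, which exceeds the $N$ derivatives available in $\mathcal{E}_N$ once $|\gamma_1|\ge N-1$ (this occurs already for $k=N-1$, and again for $k=N$). Moreover, after your Young split with exponents $2k/\ell$ and $2k/(2k-\ell)$, the non-weighted residual carries the power $2k/(k+|\gamma_1|)\neq 2$ on the $g$-norm, so it does not match the quadratic shape required on the right-hand side of \eqref{improved energy estimate 1}--\eqref{improved energy estimate 2}.

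The paper avoids both issues by using the mixed-exponent Gagliardo--Nirenberg inequality of Lemma~\ref{interpolation} in place of pure $L^2$ log-convexity. Concretely, for $|\gamma_1|\ge 1$ and $k\le N-1$ one takes the H\"older split $L^3_x\times L^6_x$ and interpolates \emph{both} factors simultaneously with the common parameter $\theta=(|\gamma_1|-1)/k$:
\[
\|\nabla^{|\gamma_1|}g\|_{L^2_vL^3_x}\lesssim\|\nabla^{\alpha}g\|^{1-\theta}\|\nabla^{k+1}g\|^{\theta},
\qquad
\|\nu^{1/2}\nabla^{k-|\gamma_1|}\{{\bf I-P_1}\}f\|_{L^2_vL^6_x}\lesssim\|\{{\bf I-P_1}\}f\|_\nu^{\theta}\|\nabla^{k}\{{\bf I-P_1}\}f\|_\nu^{1-\theta},
\]
with $\alpha\le k/2+1$. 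The complementary exponents make the product collapse cleanly under Young's inequality to $\delta\bigl(\|\nabla^{k+1}g\|+\|\nabla^{k}\{{\bf I-P_1}\}f\|_\nu\bigr)$, which is exactly the permitted form. A separate (but analogous) interpolation handles $k=N$. In short: keep your strategy, but replace the $L^2$ log-convexity by Lemma~\ref{interpolation} and pair the two factors with complementary G--N exponents so that no derivative count exceeds $N$ and the residual is quadratic.
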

\begin{proof}
Comparing \eqref{improved energy estimate 1}--\eqref{improved energy estimate 2} with \eqref{spatial energy estimate 1}--\eqref{spatial energy
estimate 2} of Lemma \ref{lemma spatial energy}, the only difference is that we remove the last $\nu^{1/2}$-weighted summing term from \eqref{spatial
energy estimate 1}--\eqref{spatial energy estimate 2}. Hence clearly, we only need to improve the estimates of those terms in the proof of Lemma
\ref{lemma spatial energy} that leads to this $\nu^{1/2}$-weighted summing term. Indeed, these terms are $J_{1},J_2$ and $I_1,I_2$. We shall only
illustrate the improved estimates of $J_1$, and the other terms can be treated in the same way.  More precisely, we shall revisit the term $J_{12}$,
\begin{equation}
J_{12}=\norm{|\nabla^{|\gamma_1|} g|_2|\nabla^{k-|\gamma_1|}\{{\bf I-P_1}\} f|_\nu}.
\end{equation}

Note that now we can also bound the $\nu$-weighted factor by the energy, so we can pursue to adjust the index. For $k=0,\dots,N-1$, if
$|\gamma_1|=0$, then we have
\begin{equation}
J_{12}\lesssim \norm{g}_{L^\infty_xL^2_v}\norm{\nabla^k\{{\bf I-P_1}\}f}_{\nu}  \lesssim \delta \norm{\nabla^{k}\{{\bf I-P_1}\}f}_\nu;
\end{equation}
if $|\gamma_1|\ge 1$, then by Lemma \ref{Minkowski} and Lemma \ref{interpolation}, we have
\begin{equation}
\begin{split}
J_{12}&\lesssim \norm{\nabla^{|\gamma_1|} g}_{L^3_xL^2_v}\norm{\nu^{1/2} \nabla^{k-|\gamma_1|}\{{\bf I-P_1}\}f}_{L^6_xL^2_v}
\\&\lesssim \norm{\nabla^{|\gamma_1|} g}_{L^2_vL^3_x}\norm{\nu^{1/2}
\nabla^{k-|\gamma_1|}\{{\bf I-P_1}\}f}_{L^2_vL^6_x}
\\&\lesssim \norm{\nabla^\alpha g}^{1-\frac{|\gamma_1|-1}{k}}\norm{\nabla^{k+1}g}^{\frac{|\gamma_1|-1}{k}}
\norm{\{{\bf I-P_1}\}f}_\nu^{\frac{|\gamma_1|-1}{k}}\norm{\nabla^{k}\{{\bf I-P_1}\}f}_\nu^{1-\frac{|\gamma_1|-1}{k}}
\\&\lesssim \delta\left(\norm{\nabla^{k+1}g}+\norm{\nabla^{k}\{{\bf I-P_1}\}f}_\nu\right),
\end{split}
\end{equation}
where we have denoted $\alpha$ by
\begin{equation}
\begin{split}
&\frac{1}{3}-\frac{|\gamma_1|}{3}= \left(\frac{1}{2}-\frac{\alpha}{3}\right)\times
\left(1-\frac{|\gamma_1|-1}{k}\right)+\left(\frac{1}{2}-\frac{k+1}{3}\right)\times \frac{|\gamma_1|-1}{k}
\\&\quad\Longrightarrow \alpha=\frac{\frac{3}{2}k-(|\gamma_1|-1)}{k-(|\gamma_1|-1)}\le \frac{k}{2}+1.
\end{split}
\end{equation}
Hence, we have that for $k=0,\dots,N-1$,
\begin{equation}\label{j1211}
 J_{12}\lesssim \delta\left(\norm{\nabla^{k+1}g}+\norm{\nabla^{k}\{{\bf I-P_1}\}f}_\nu\right).
\end{equation}

Now for $k=N$, if $|\gamma_1|\ge N-1$, by Lemma \ref{Minkowski} and Lemma \ref{interpolation}, we estimate
\begin{equation}
\begin{split}
J_{12}&\le \norm{\nabla^{|\gamma_1|} g}\norm{\nu^{1/2}\nabla^{N-|\gamma_1|}\{{\bf I-P_1}\} f}_{L^\infty_xL^2_v} \le\norm{\nabla^{|\gamma_1|}
g}\norm{\nu^{1/2}\nabla^{N-|\gamma_1|}\{{\bf I-P_1}\} f}_{L^2_vL^\infty_x}
\\&\lesssim \norm{\nabla^\alpha g}^{1-\frac{2|\gamma_1|-3}{2N}}\norm{\nabla^Ng}^{\frac{2|\gamma_1|-3}{2N}}\norm{\{{\bf I-P_1}\}f}_{\nu}^{\frac{2|\gamma_1|-3}{2N}}\norm{\nabla^N\{{\bf I-P_1}\}f}_{\nu}^{1-\frac{2|\gamma_1|-3}{2N}}
\\&\lesssim \delta\left(\norm{\nabla^Ng}+\norm{\nabla^N\{{\bf I-P_1}\}f}_{\nu}\right),
\end{split}
\end{equation}
where we have denoted $\alpha$ by
\begin{equation}
\begin{split}
&|\gamma_1|=\alpha\times \left(1-\frac{2|\gamma_1|-3}{2N}\right)+N\times\frac{2|\gamma_1|-3}{2N}
\\&\quad\Longrightarrow \alpha=\frac{3N}{2(N-|\gamma_1|)+3}\le N;
\end{split}
\end{equation}
and if $|\gamma_1|\le N-2$, by again Lemma \ref{Minkowski} and Lemma \ref{interpolation}, we estimate
\begin{equation}
\begin{split}
J_{12}&\le \norm{\nabla^{|\gamma_1|} g}_{L^\infty_xL^2_v}\norm{\nabla^{N-|\gamma_1|}\{{\bf I-P_1}\} f}_{\nu} \le\norm{\nabla^{|\gamma_1|}
g}_{L^2_vL^\infty_x}\norm{\nabla^{N-|\gamma_1|}\{{\bf I-P_1}\} f}_{\nu}
\\&\lesssim \norm{\nabla^\alpha g}^{1-\frac{|\gamma_1|}{N}}\norm{\nabla^{N} g}^{\frac{|\gamma_1|}{N}}
\norm{\{{\bf I-P_1}\} f}_\nu^{\frac{|\gamma_1|}{N}}\norm{\nabla^{N}\{{\bf I-P_1}\} f}_\nu^{\frac{N-|\gamma_1|}{N}}
\\&\lesssim \delta\left(\norm{\nabla^Ng}+\norm{\nabla^N\{{\bf I-P_1}\}f}_{\nu}\right),
\end{split}
\end{equation}
where we have denoted $\alpha$ by
\begin{equation}
\begin{split}
&-\frac{|\gamma_1|}{3}=
\left(\frac{1}{2}-\frac{N}{3}\right)\times\frac{|\gamma_1|}{N}+\left(\frac{1}{2}-\frac{\alpha}{3}\right)\times\left(1-\frac{|\gamma_1|}{N}\right)
\\&\quad\Longrightarrow \alpha=\frac{3N}{2(N-|\gamma_1|)}\le \frac{3N}{4}\ \text{ since }|\gamma_1|\le N-2.
\end{split}
\end{equation}
Hence, we have that for $k=N$,
\begin{equation}\label{j1212}
 J_{12}\lesssim \delta\left(\norm{\nabla^N g}+\norm{\nabla^N\{{\bf I-P_1}\}f}_\nu\right).
\end{equation}

Comparing the estimates \eqref{j1211} and \eqref{j1212} with \eqref{j12}, we have succeeded in removing the $\nu^{1/2}$-weighted summing term from
the estimates of $J_{12}$. Applying the similar argument, we can remove this $\nu^{1/2}$-weighted summing term from the estimates of $J_2$ and
$I_1,I_2$, and hence we get \eqref{improved energy estimate 1}--\eqref{improved energy estimate 2}.
\end{proof}

By the estimates \eqref{micro estimate 000}--\eqref{micro estimate 11}, we know that if $\mathcal{E}_N(0)+\sum_{0\le k\le
N-1}\norm{\left[\nabla^kf_0,\nabla^kg_0\right]}_\nu^2$ is small, then $\mathcal{E}_N(t)+\sum_{0\le k\le N-1}\norm{\left[\nabla^kf(t),\nabla^kg(t)\right]}_\nu^2$ is
small. With the help of this weighted bound,  we can deduce a further energy estimate of $g$ and $\nabla_x\Phi$ which implies the exponential decay
of $g$ and $\nabla_x\Phi$. This energy estimate also can be used to kill the summing term related to $\nabla_x\Phi$ in the right hand side of
\eqref{improved energy estimate 1}--\eqref{improved energy estimate 2}, and hence the energy estimates in Lemma \ref{lemma micro 22} will be improved.
\begin{lemma}\label{lemma micro 33}
If $\mathcal{E}_N(t)+\sum_{0\le k\le N-1}\norm{\left[\nabla^kf(t),\nabla^kg(t)\right]}_\nu^2\le \delta$, then there exists an equivalent energy
functional
\begin{equation}
\mathcal{E}_g\sim\sum_{0\le k\le N-1}\|\nabla^k  g \|^2+\sum_{0\le k\le N}\norm{\nabla^k \nabla_x\Phi}^2
\end{equation}
 such that the following inequality holds:
\begin{equation}\label{micro g}
\frac{d}{dt}\mathcal{E}_g+ C\left(\sum_{0\le k\le N-1}\|\nabla^k   g \|_\nu^2+\sum_{0\le k\le N}\norm{\nabla^k \nabla_x\Phi}^2\right)\le0.
\end{equation}
\end{lemma}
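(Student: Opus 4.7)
The plan is to derive an energy inequality directly from the second equation of \eqref{VPB_per} and the Poisson equation, combined with the interactive functional $G_g^k$ of Lemma \ref{lemma positive}, so that full dissipative control of $g$ and $\nabla_x\Phi$ emerges. For each $0\le k\le N-1$ and each $|\gamma|=k$, I would apply $\partial^\gamma$ to $\eqref{VPB_per}_2$ and take the $L^2$ inner product with $\partial^\gamma g$. As in \eqref{poisson estimate}, the cross term $-(\partial^\gamma\nabla_x\Phi\cdot v\sqrt\mu,\partial^\gamma g)$ is rewritten as $\frac{1}{2}\frac{d}{dt}\norm{\partial^\gamma\nabla_x\Phi}^2$ by means of $\eqref{local conservation laws}_4$ and $\Delta_x\Phi=d$, while the coercivity \eqref{micro est 2} of $\mathcal{L}_2$ yields dissipation on $\partial^\gamma\{\mathbf{I-P_2}\}g$ in the $\nu$-weighted norm. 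Summing over $0\le k\le N-1$ gives
\begin{equation}\nonumber
\frac{1}{2}\frac{d}{dt}\sum_{0\le k\le N-1}\bigl(\norm{\nabla^k g}^2+\norm{\nabla^k\nabla_x\Phi}^2\bigr)+\sigma_0\sum_{0\le k\le N-1}\norm{\nabla^k\{\mathbf{I-P_2}\}g}_\nu^2\le(\text{nonlinear}).
\end{equation}

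Next I would add a small multiple of $\sum_{0\le k\le N-1}G_g^k$; the estimate \eqref{g positive estimate}, once its nonlinear residue $\delta^2\norm{\nabla^{k+1}f}^2$ has been improved away as described below, supplies the missing dissipation on $\norm{\nabla^k\mathbf{P_2}g}^2$ for $0\le k\le N$ and on $\norm{\nabla^k\nabla_x\Phi}^2$ for $0\le k\le N+1$. Because $\mathbf{P_2}g=d\sqrt\mu$ has Maxwellian decay in $v$, the dissipation on its spatial derivatives is equivalent to its $\nu$-weighted norm, and combined with the microscopic dissipation one recovers $\norm{\nabla^k g}_\nu^2$ in full for $0\le k\le N-1$. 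The functional
\begin{equation}\nonumber
\mathcal{E}_g:=K\sum_{0\le k\le N-1}\bigl(\norm{\nabla^k g}^2+\norm{\nabla^k\nabla_x\Phi}^2\bigr)+\sum_{0\le k\le N-1}G_g^k,
\end{equation}
with $K$ large enough, is equivalent to the expression stated in the lemma by \eqref{G_fg^k estimate} together with the Calder\'on--Zygmund bound $\norm{\nabla^{k+1}\nabla_x\Phi}\lesssim\norm{\nabla^k d}\lesssim\norm{\nabla^k g}$. Since $\nu(v)\gtrsim 1$, the produced dissipation majorizes $\mathcal{E}_g$, so a Lyapunov inequality $\frac{d}{dt}\mathcal{E}_g+C\mathcal{E}_g\le 0$ closes and yields the exponential decay \eqref{micro g}.

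The main obstacle is to ensure the nonlinear coupling with $f$ leaves no residue other than terms absorbable into $\sqrt{\delta}\,\mathcal{D}_g$ or $\delta\,\mathcal{E}_g$. The crucial input is the weighted a priori smallness $\sum_{0\le k\le N-1}\norm{\nabla^k f}_\nu^2\le\delta$ built into the hypothesis. For the terms $\Gamma(g,f)$ and $\nabla_x\Phi\cdot vf$, after Leibniz and Lemma \ref{nonlinearcol1}, I would choose the Sobolev interpolation so that the factor carrying $f$ is placed in an $L^p_xL^2_v$ space (using Lemma \ref{Minkowski} to interchange the orders of integration) and bounded by $\sqrt\delta$ through the weighted bound, while the $g$ or $\nabla_x\Phi$ factor stays at a derivative count already appearing in $\mathcal{E}_g$ or $\mathcal{D}_g$. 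This is the same reindexing device used in Lemma \ref{lemma micro 22} and it avoids the problematic $\norm{\nabla^N g}$ that a naive splitting would generate in the boundary case $k=N-1$. The dangerous velocity-gradient term $\nabla_x\Phi\cdot\nabla_v f$ is handled by first integrating by parts in $v$ to shift $\nabla_v$ onto $\partial^\gamma g$: the piece landing on $\mathbf{P_2}g=d\sqrt\mu$ is benign because the derivative hits the Maxwellian, while the piece landing on $\{\mathbf{I-P_2}\}g$ is controlled by the global smallness $\mathcal{E}_N(t)\le\delta$, which encodes velocity derivatives. The same scheme must then be used to re-derive \eqref{g positive estimate}, so that its $\delta^2\norm{\nabla^{k+1}f}^2$ residue is replaced by a term of the form $\delta\mathcal{E}_g+\sqrt\delta\mathcal{D}_g$; this improvement is indispensable for closing the Lyapunov inequality cleanly.
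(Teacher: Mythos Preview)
Your overall architecture matches the paper's: energy estimate on $\eqref{VPB_per}_2$ for $0\le k\le N-1$, plus a small multiple of the interactive functionals $G_g^k$ to recover dissipation of $\mathbf{P}_2 g$ and $\nabla_x\Phi$, with the $f$-factors in every nonlinear term bounded by the weighted hypothesis. Two points, however, need correction.

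First, the range for $G_g^k$ must be $0\le k\le N-2$, not $N-1$. By \eqref{G_fg^k estimate}, $|G_g^{N-1}|$ involves $\norm{\nabla^N g}^2$, which is absent from the target $\mathcal{E}_g$, so the equivalence you assert would fail. Moreover \eqref{g positive estimate} at level $k=N-1$ produces $\norm{\nabla^N\{\mathbf{I-P}_2\}g}^2$ on the right, which your $k\le N-1$ energy identity does not supply. The paper stops at $k=N-2$, and this already yields dissipation of $\norm{\nabla^k\mathbf{P}_2 g}^2$ for $0\le k\le N-1$ and of $\norm{\nabla^k\nabla_x\Phi}^2$ for $0\le k\le N$, which is all that is needed.

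Second, your proposed integration by parts in $v$ for the term $(\partial^\gamma(\nabla_x\Phi\cdot\nabla_v f),\partial^\gamma g)$ does not close. After the IBP the piece landing on $\nabla_v\partial^\gamma\{\mathbf{I-P}_2\}g$ is, as you say, bounded via $\mathcal{E}_N\le\delta$; but then only one of the remaining two factors $\nabla^\ell\nabla_x\Phi$ and $\nabla^{k-\ell}f$ lies in the $g$-dissipation (namely $\nabla_x\Phi$), while $f$ must also be bounded by $\sqrt\delta$. You end up with $\delta\sqrt{\mathcal{D}_g}$ rather than $\sqrt\delta\,\mathcal{D}_g$, and after Young's inequality this leaves a constant $C\delta^2$ on the right-hand side, destroying the Lyapunov inequality $\frac{d}{dt}\mathcal{E}_g+C\mathcal{E}_g\le 0$. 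The paper avoids this by \emph{not} integrating by parts: it bounds $\norm{\nabla_v\nabla^{k-\ell}f}$ (in $L^2_xL^2_v$ or $L^\infty_xL^2_v$) directly by $\sqrt{\mathcal{E}_N}\le\sqrt\delta$ using the mixed-derivative part of $\mathcal{E}_N$, and then the two remaining factors $\nabla^\ell\nabla_x\Phi$ and $\nabla^k g$ are both controlled by the $g$-dissipation. More generally, the paper's treatment of all three nonlinear terms in $\mathfrak{N}_2$ is simply to place the $f$-related factor in $L^\infty_xL^2_\nu$ or $L^2_xL^2_\nu$ and bound it by the hypothesis, without any interpolation or reindexing; the elaborate scheme from Lemma~\ref{lemma micro 22} that you invoke is unnecessary here.
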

\begin{proof}
The standard $\nabla^k$ energy estimates on $\eqref{VPB_per}_2$ yields that for $k=0,\dots,N-1,$
\begin{equation}\label{gggg}
\begin{split}
&\frac{1}{2}\frac{d}{dt}\left(\|\nabla^k g \|^2+\norm{\nabla^k \nabla_x\Phi}^2\right) + \sigma_0\norm{\nabla^k\{{\bf I-P_2}\}g}_\nu^2 \\&\quad\le
\left(\nabla^k\left(\Gamma(g ,f )+\frac{1}{2}\nabla_x\Phi\cdot vf-\nabla_x\Phi\cdot\nabla_vf\right),\nabla^k g\right ).
\end{split}
\end{equation}
To estimate the right hand side of \eqref{gggg}, we will simply bound the norm of the $f$-related factors by $\delta$. More precisely, by the
collision estimate \eqref{nonlineares3} (with $\eta=0$) and Sobolev's inequality, we have
\begin{equation}
\begin{split}
\left(\nabla^k\Gamma(g ,f ),\nabla^k g\right )&\lesssim \sum_{0\le \ell\le k}\norm{|\nabla^\ell g|_\nu|\nabla^{k-\ell} f|_2+|\nabla^\ell
g|_2|\nabla^{k-\ell} f|_\nu}\norm{\nabla^k g}_\nu \\&\lesssim \sum_{0\le k\le N-1}\norm{\nabla^k f}_\nu\sum_{0\le k\le N-1}\norm{\nabla^k
g}_\nu\norm{\nabla^k g}_\nu\\&\lesssim \delta \sum_{0\le k\le N-1}\norm{\nabla^k g}_\nu^2.
\end{split}
\end{equation}
Applying the similar argument to the other two terms, we get
\begin{equation}\label{000}
\begin{split}
&\frac{d}{dt}\sum_{0\le k\le N-1}\left(\|\nabla^k g \|^2+\norm{\nabla^k \nabla_x\Phi}^2\right) + C\sum_{0\le k\le N-1}\norm{\nabla^k\{{\bf
I-P_2}\}g}_\nu^2
\\&\quad\lesssim \delta\sum_{0\le k\le
N-1}\left(\norm{\nabla^k \nabla_x\Phi}^2+\|\nabla^k {\bf P_2}g \|^2\right).
\end{split}
\end{equation}

On the other hand, we may go back to \eqref{full_estimate 2} to find that for $k=0,\dots,N-2$
\begin{equation}\label{gkgk}
\begin{split}
&\frac{d}{dt}G_g^k(t)+\|\nabla^{k}  {\bf P }_2g \|^2+\|\nabla^{k+1}{\bf P }_2g \|^2+\norm{[\nabla^k \nabla_x \Phi,\nabla^{k+1}\nabla_x
\Phi,\nabla^{k+2}\nabla_x \Phi]}^2
\\&\quad\lesssim\|\nabla^{k}\{{\bf I-P_2}\} g \|^2+\|\nabla^{k+1}\{{\bf I-P_2}\} g]\|^2
+\norm{\nabla^{k}\mathfrak{N}_{2,\parallel}}^2.
\end{split}
\end{equation}
To estimate $\norm{\nabla^{k}\mathfrak{N}_{2,\parallel}}^2$, we simply bound the norm of the $f$-related terms by $\delta$ to have
\begin{equation}
\norm{ \nabla^k\mathfrak{N}_{2,\parallel}}^2 \lesssim \delta \sum_{0\le k\le N-2}\left(\norm{\nabla^k \nabla_x\Phi}^2+\|\nabla^k g \|^2\right).
\end{equation}
Then summing up \eqref{gkgk} from $k=0$ to $N-2$, since $\delta$ is small, we have
\begin{equation}\label{333}
\frac{d}{dt}\sum_{0\le k\le N-2}G_g^k(t)+\sum_{0\le k\le N-1}\|\nabla^{k}  {\bf P }_2g \|^2+\sum_{0\le k\le N}\norm{\nabla^k \nabla_x \Phi}^2
\lesssim \sum_{0\le k\le N-1}\|\nabla^{k}\{{\bf I-P_2}\} g \|^2.
\end{equation}
A suitable linear combination of \eqref{000} and \eqref{333} gives \eqref{micro g}.
\end{proof}

The following lemma  provides the needed estimates for proving the faster decay of the microscopic part $\{{\bf I-P_1}\}f$.
\begin{lemma}\label{lemma micro 44}
If $\mathcal{E}_N(t)+\sum_{0\le k\le N-1}\norm{\left[\nabla^kf(t),\nabla^kg(t)\right]}_\nu^2\le \delta$, then for $k=0,\dots,N-2$,
\begin{equation}\label{micro estimate 2}
\begin{split}
&\frac{d}{dt}\norm{\left[\nabla^k\{{\bf I-P_1}\}f,\nabla^k\{{\bf I-P_2}\}g\right]}^2+C\norm{\left[\nabla^k\{{\bf I-P_1}\}f,\nabla^k\{{\bf
I-P_2}\}g\right]}_\nu^2
\\&\quad\lesssim\norm{\left[\nabla^{k+1} f,\nabla^{k+1} g\right]}^2+\norm{\nabla^{k}{\bf P_2}g}^2+\sum_{0\le k\le N-2}\norm{\nabla^k \nabla_x\Phi}^2.
\end{split}
\end{equation}
\end{lemma}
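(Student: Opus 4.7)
The plan is to perform a standard $\partial^\gamma$ energy estimate directly on the microscopic components, exactly as in Lemma \ref{lemma spatial velocity} but without any velocity derivatives, and to exploit the stronger smallness hypothesis $\mathcal{E}_N(t)+\sum_{0\le k\le N-1}\norm{[\nabla^k f,\nabla^k g]}_\nu^2\le \delta$ in order to absorb the $\nu$--weighted nonlinear contributions.

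Concretely, I would apply $\partial^\gamma$ with $|\gamma|=k$ to $\eqref{VPB_per}_1$ and $\eqref{VPB_per}_2$ and take the $L^2_{x,v}$ inner product with $\partial^\gamma \{{\bf I-P_1}\}f$ and $\partial^\gamma \{{\bf I-P_2}\}g$ respectively. Since $\mathbf{P_1},\mathbf{P_2}$ are velocity-only orthogonal projections and commute with $\partial_t,\nabla_x$, the time derivative yields exactly $\frac{1}{2}\frac{d}{dt}\norm{\partial^\gamma\{{\bf I-P_i}\}\cdot}^2$ (the cross term vanishes by orthogonality). The linear coercivity \eqref{coercive boltzmann} delivers the dissipation $\sigma_0\norm{\partial^\gamma\{{\bf I-P_i}\}\cdot}_\nu^2$. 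The free-streaming term splits into $(v\cdot\nabla_x \partial^\gamma \mathbf{P}_i\cdot,\partial^\gamma\{\mathbf{I-P}_i\}\cdot)+(v\cdot\nabla_x\partial^\gamma\{\mathbf{I-P}_i\}\cdot,\partial^\gamma\{\mathbf{I-P}_i\}\cdot)$; the second piece vanishes by integration by parts in $x$, while the first, using the exponential velocity decay of $\mathbf{P}_i$--elements and Cauchy's inequality, is bounded by $\varepsilon\norm{\partial^\gamma\{\mathbf{I-P}_i\}\cdot}_\nu^2+C_\varepsilon\norm{\nabla^{k+1}f,\nabla^{k+1}g}^2$. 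The $\partial^\gamma\nabla_x\Phi\cdot v\sqrt{\mu}$ term from $\eqref{VPB_per}_2$ is handled identically and generates $C_\varepsilon\norm{\nabla^k \nabla_x\Phi}^2$, which fits into the right-hand side.

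For the nonlinear pieces $\partial^\gamma\mathfrak{N}_1$ and $\partial^\gamma\mathfrak{N}_2$, I would revisit the estimates of $J_1,J_2,I_1,I_2$ from the proof of Lemma \ref{lemma spatial energy}. Every bound there produced terms of three types: $\delta$ times an $H^{k+1}$ norm of $[f,g]$ or $\nabla_x\Phi$ (already on our right-hand side), $\delta$ times $\norm{\nabla^k \{\mathbf{I-P}_i\}\cdot}_\nu$ (absorbable into the dissipation via Cauchy), and a summed $\nu$--weighted term $\sum_{1\le \ell\le N}\norm{\nabla^\ell \{\mathbf{I-P}_i\}\cdot}_\nu^2$. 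The improvement comes from the new hypothesis: these summed $\nu$--weighted pieces can now be bounded by $\delta$ and reabsorbed, exactly in the spirit of Lemma \ref{lemma micro 22}, because the weighted factors are no longer larger than the energy. The worst ingredient, $\nabla_x\Phi\cdot\nabla_v g$ paired against $\partial^\gamma\{\mathbf{I-P}_1\}f$ and the symmetric $\nabla_x\Phi\cdot\nabla_v f$ paired against $\partial^\gamma\{\mathbf{I-P}_2\}g$, is handled by the same integration-by-parts cancellation in $v$ used in Lemma \ref{lemma spatial velocity}: summing the two top-order ($\gamma_1=0$) contributions gives $\int\nabla_x\Phi\cdot\nabla_v(\partial^\gamma\{\mathbf{I-P}_1\}f\cdot \partial^\gamma\{\mathbf{I-P}_2\}g)\,dv\,dx=0$, and the remaining $\gamma_1\ge 1$ pieces are controlled through H\"older, Minkowski (Lemma \ref{Minkowski}) and Sobolev interpolation against the smallness $\delta$.

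The main obstacle is exactly this last point: since we are not allowed to put any $\nu$--weighted or velocity-derivative norm on the right-hand side of \eqref{micro estimate 2}, the velocity-derivative Vlasov terms must be absorbed entirely through cancellation plus interpolation, mirroring but simplifying the argument of Lemma \ref{lemma spatial velocity}. Once these estimates are combined, choosing $\varepsilon$ small and invoking the smallness of $\delta$ to hide the resulting $\delta$--prefactored dissipation on the left, we obtain \eqref{micro estimate 2} for $k=0,\dots,N-2$. The upper bound $k\le N-2$ arises because the Sobolev--interpolation steps on the nonlinear terms require at least one extra order of regularity and because $\sum_{0\le k\le N-2}\norm{\nabla^k\nabla_x\Phi}^2$ is the natural right-hand side matching what Lemma \ref{lemma micro 33} controls.
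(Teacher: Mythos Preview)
Your proposal is correct and follows precisely the route the paper indicates: the authors simply state that the proof is ``similar to those of Lemma \ref{lemma micro 11}--\ref{lemma micro 33} and hence is omitted,'' and your sketch fills in exactly that template---an unweighted $\partial^\gamma$ energy estimate on the microscopic parts, coercivity \eqref{3}, the streaming and $\nabla_x\Phi\cdot v\sqrt{\mu}$ terms handled by Cauchy's inequality, the nonlinear collision and $v$-weighted Vlasov terms upgraded via the stronger weighted smallness as in Lemma \ref{lemma micro 22}, and the top-order $\nabla_v$-Vlasov contributions cancelled in pairs as in Lemma \ref{lemma spatial velocity}. One minor simplification you correctly note is that, because there is no $\nu$-weight in the pairing here, the cross term $(\partial_t\partial^\gamma\mathbf{P}_i\cdot,\partial^\gamma\{\mathbf{I-P}_i\}\cdot)$ vanishes by $L^2_v$-orthogonality and does not require the local conservation laws used in Lemma \ref{lemma micro 11}.
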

\begin{proof}
The proof of \eqref{micro estimate 2} is similar to those of Lemma \ref{lemma micro 11}--\ref{lemma micro 33} and hence is omitted.
\end{proof}

\subsection{Proof of Theorem \ref{theorem2}}\label{3.2}
In this subsection, we will complete the proof of Theorem \ref{theorem2} by using instead the weighted estimates derived in the previous subsection. Recall
that all the statements of Theorem \ref{theorem1} are valid.

First, letting $\mathcal{E}_N(0)+\norm{[f_0,g_0]}_\nu^2$ be sufficiently small, then by the estimate \eqref{micro estimate 000} of Lemma \ref{lemma micro 11}, we have that
$\mathcal{E}_N(t)+\norm{\left[f(t),g(t)\right]}_\nu^2$ is small. Hence, in light of the estimates in Lemma \ref{lemma micro 22}, we can improve the
estimates \eqref{energy estimate 4} to be that for $\ell=0,\dots,N-1$,
\begin{equation}\label{energy estimate 4'}
\begin{split}
&\frac{d}{dt}\widetilde{\mathcal{E}}_\ell+\sum_{\ell\le k\le N}\|[\nabla^{k}\{{\bf I-P _1}\} f,\nabla^{k}\{{\bf I-P _2}\} g] \|_\nu^2
+\sum_{\ell+1\le k\le N} \|\nabla^{k}  {\bf P_1 }f \|^2\\&\quad+\sum_{\ell\le k\le N} \norm{\nabla^{k} {\bf P_2 }g}^2 +\sum_{\ell\le k\le N+1}
\|\nabla^{k} \nabla_x\Phi \|^2\le C_5\delta \sum_{2\le k \le N+1}\norm{\nabla^k\nabla_x\Phi}^2.
\end{split}
\end{equation}
Taking $\ell=2$ in \eqref{energy estimate 4'}, we can then absorb the  right hand side to obtain, by adjusting again the constant in the definition
of $\widetilde{\mathcal{E}}_2(t)$,
\begin{equation}\label{energy estimate 5'}
\begin{split}
&\frac{d}{dt}\widetilde{\mathcal{E}}_2+\sum_{2\le k\le N}\norm{\left[\nabla^{k}\{{\bf I-P _1}\} f,\nabla^{k}\{{\bf I-P _2}\} g\right] }_\nu^2
\\&\quad+\sum_{3\le k\le N} \norm{\nabla^{k}  {\bf P_1 }f }^2+\sum_{2\le k\le N} \norm{\nabla^{k} {\bf
P_2 }g}^2 +\sum_{2\le k\le N+1} \norm{\nabla^{k} \nabla_x\Phi }^2\le 0.
\end{split}
\end{equation}
Then the time differential inequality \eqref{time inequality} corresponds to
\begin{equation}\label{time inequality'}
\frac{d}{dt}\widetilde{\mathcal{E}}_2+ C_0\left(\widetilde{\mathcal{E}}_2\right)^{1+\frac{1}{2+s}}\le 0.
\end{equation}
Solving this inequality and recalling the definition of $\widetilde{\mathcal{E}}_2$, we deduce \eqref{decay21}.

Now, letting $\mathcal{E}_N(0)+\sum_{0\le k\le N-1}\norm{[\nabla^kf_0,\nabla^kg_0]}_\nu^2$ be sufficiently small, then by the estimate \eqref{micro estimate 11} of Lemma \ref{lemma micro 11},
we have that $\mathcal{E}_N(t)+\sum_{0\le k\le N-1}\norm{\left[\nabla^kf(t),\nabla^kg(t)\right]}_\nu^2$ is small. Then by Lemma \ref{lemma micro
33}, there exists a constant $\lambda>0$ such that
\begin{equation}
\frac{d}{dt}\mathcal{E}_g+ \lambda \mathcal{E}_g\le0.
\end{equation}
Solving this inequality and recalling the definition of $\mathcal{E}_g$, we obtain \eqref{decay22}. On the other hand, adding the estimates
\eqref{energy estimate 4'} and \eqref{micro g}, since $\delta$ is small, we obtain
\begin{equation}\label{energy estimate 411'}
\begin{split}
&\frac{d}{dt}\left(\widetilde{\mathcal{E}}_\ell+\mathcal{E}_g\right)+\sum_{\ell\le k\le N}\|\nabla^{k}\{{\bf I-P _1}\} f\|_\nu^2 +\sum_{\ell+1\le
k\le N} \|\nabla^{k}  {\bf P_1 }f \|^2\\&\quad+\sum_{0\le k\le N} \norm{\nabla^{k} g}_\nu^2 +\sum_{0\le k\le N+1} \|\nabla^{k} \nabla_x\Phi \|^2\le
0.
\end{split}
\end{equation}
Notice that there is only one exceptional term ${\bf \nabla^\ell P_1 }f$ in $\widetilde{\mathcal{E}}_\ell+\mathcal{E}_g$ that can not be bounded by
the corresponding dissipation in \eqref{energy estimate 411'}. But similarly to \eqref{time inequality}, we have that for $\ell=3,\dots,N-1$,
\begin{equation}\label{time inequality'''}
\frac{d}{dt}\left(\widetilde{\mathcal{E}}_\ell+\mathcal{E}_g\right)+
C_0\left(\widetilde{\mathcal{E}}_\ell+\mathcal{E}_g\right)^{1+\frac{1}{\ell+s}}\le 0.
\end{equation}
Solving this inequality will in particular give \eqref{decay231} and \eqref{decay23}.

Finally, applying the Gronwall inequality to \eqref{micro estimate 2}, by \eqref{decay22} and \eqref{decay23}, we obtain that for $k=0,\dots,N-2$,
\begin{equation}
\begin{split}
&\norm{\left[\nabla^k\{{\bf I-P_1}\}f(t),\nabla^k\{{\bf I-P_2}\}g(t)\right]}^2
\\&\quad\le e^{-Ct}\norm{\left[\nabla^k\{{\bf I-P_1}\}f_0,\nabla^k\{{\bf
I-P_2}\}g_0\right]}^2 +C\int_0^te^{-C(t-\tau)}\left(\norm{\left[\nabla^{k+1} f(\tau),\nabla^{k+1} g(\tau)\right]}^2\right.
\\&\qquad\qquad\qquad\qquad\qquad\qquad\qquad\qquad\qquad\qquad\left.+\norm{\nabla^{k}{\bf
P_2}g(\tau)}^2+\sum_{0\le k\le N-2}\norm{\nabla^k \nabla_x\Phi(\tau)}^2\right)\,d\tau
\\&\quad\le C_0e^{-Ct}+C_0\int_0^te^{-C(t-\tau)} \left((1+\tau)^{-(k+1+s)}+e^{-\lambda \tau}\right)\,d\tau
\\&\quad\le C_0(1+t)^{-(k+1+s)}.
\end{split}
\end{equation}
This in particular gives \eqref{decay24}. The proof of Theorem \ref{theorem2} is completed.\hfill$\Box$

\appendix

\section{Analytic tools}\label{section_appendix}

\subsection{Sobolev type inequalities}

We will extensively use the Sobolev interpolation of the Gagliardo-Nirenberg inequality.
 \begin{lemma}\label{1interpolation}
 Let $0\le m, \alpha\le \ell$, then we have
\begin{equation}
\norm{\nabla^\alpha f}_{L^p}\lesssim \norm{  \nabla^mf}_{L^2}^{1-\theta}\norm{ \nabla^\ell f}_{L^2}^{\theta}
\end{equation}
where $0\le \theta\le 1$ and $\alpha$ satisfy
\begin{equation}
\frac{1}{p}-\frac{\alpha}{3}=\left(\frac{1}{2}-\frac{m}{3}\right)(1-\theta)+\left(\frac{1}{2}-\frac{\ell}{3}\right)\theta.
\end{equation}
\begin{proof}
This can be found in  \cite[pp. 125, THEOREM]{N1959}.
\end{proof}
\end{lemma}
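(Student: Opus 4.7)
The plan is to prove this classical Gagliardo--Nirenberg interpolation by a Littlewood--Paley / Bernstein argument, working in Fourier variables (which is legitimate since, by the paper's notational convention, $\nabla^\alpha = \Lambda^\alpha$ is the Fourier multiplier $|\xi|^\alpha$ when $\alpha$ is not a positive integer, and for integer $\alpha$ the two definitions agree up to equivalent norms). First I would decompose $f = \sum_{j\in\mathbb{Z}} P_j f$, where $P_j$ is the Littlewood--Paley projection onto the dyadic annulus $|\xi|\sim 2^j$, so that each $\nabla^\alpha P_j f$ is frequency-localized.

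Next I would apply Bernstein's inequality at each frequency scale: for $2\le p<\infty$,
\begin{equation*}
\norm{\nabla^\alpha P_j f}_{L^p}\lesssim 2^{j\alpha}\,2^{3j(1/2-1/p)}\norm{P_j f}_{L^2}.
\end{equation*}
The key algebraic observation is that the scaling constraint in the hypothesis rearranges to
\begin{equation*}
\alpha+3\left(\tfrac{1}{2}-\tfrac{1}{p}\right)=(1-\theta)m+\theta\,\ell,
\end{equation*}
so the dyadic weight on the right factors as $2^{j[\alpha+3(1/2-1/p)]}=(2^{jm})^{1-\theta}(2^{j\ell})^{\theta}$. Raising the Bernstein bound to the $(1-\theta)$ and $\theta$ powers, applying H\"older in $\ell^2_j$, and using the Plancherel-type identity $\norm{\nabla^k f}_{L^2}^2\sim \sum_j 2^{2jk}\norm{P_j f}_{L^2}^2$ would then sum the dyadic pieces to produce the desired interpolation bound, at least for $2\le p<\infty$.

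The main obstacle will be covering the full range of $p$. For $1\le p<2$ one uses duality to reduce to the conjugate exponent (which lies in $(2,\infty]$), and for $p=\infty$ the square-function characterization breaks, so one must handle $\sup_j \norm{\nabla^\alpha P_j f}_{L^\infty}$ directly via the geometric--series version of the dyadic summation. A subtler issue is verifying that the scaling constraint combined with $0\le m,\alpha\le\ell$ does force $\theta\in[0,1]$, which is the content of the convexity check that makes the interpolation meaningful; this is where the hypothesis $\alpha\le\ell$ is essential. Given that the inequality is a standard classical result due to Gagliardo and Nirenberg, the citation in the paper is in fact the most efficient exposition, and the sketch above merely recovers the Fourier-analytic proof.
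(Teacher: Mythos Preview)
The paper's ``proof'' is nothing more than a citation to Nirenberg's 1959 paper; no argument is given. Your sketch supplies an actual argument via Littlewood--Paley theory and Bernstein's inequality, which is a genuinely different route from Nirenberg's original real-variable method (iterated one-dimensional integration by parts and elementary product estimates). Your Fourier approach makes the scaling identity $\alpha+3(\tfrac{1}{2}-\tfrac{1}{p})=(1-\theta)m+\theta\ell$ completely transparent and is very clean for $p\ge 2$, which in fact covers every application of the lemma in this paper ($L^3$, $L^6$, $L^\infty$). Nirenberg's direct method, by contrast, handles the full range $1\le p\le\infty$ uniformly without needing separate endpoint arguments.

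Two minor corrections to your sketch. First, the duality reduction you propose for $1\le p<2$ is not as straightforward as stated: the Littlewood--Paley square-function inequalities reverse direction in that range, so one cannot simply pass to the conjugate exponent; a different summation argument (or an appeal to real interpolation) is needed there. Second, in the lemma as stated $\theta\in[0,1]$ is part of the hypothesis, not something to be deduced from $0\le m,\alpha\le\ell$, so no ``convexity check'' is required.
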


We shall also use the corresponding Sobolev interpolation of the Gagliardo-Nirenberg inequality for the functions on $\r3_x\times\r3_v$.
\begin{lemma}
\label{interpolation}
Let $0\le m, \alpha\le \ell$. Let $w(v)$ be any weight function of $v$, then we have
\begin{equation}\label{GN00}
\left(\int_{\mathbb{R}^3_v}w\norm{\nabla^\alpha f}_{L^p_x}^2\,dv\right)^{\frac{1}{2}}
\lesssim \left(\int_{\mathbb{R}^3_v}w\norm{ \nabla^mf}_{L^2_x}^2\,dv\right)^{\frac{1-\theta}{2}}\left(\int_{\mathbb{R}^3_v}w\norm{ \nabla^\ell f}_{L^2_x}^2\,dv\right)^{\frac{\theta}{2}}
\end{equation}
where $0\le \theta\le 1$ and $\alpha$ satisfy
\begin{equation}
\frac{1}{p}-\frac{\alpha}{3}=\left(\frac{1}{2}-\frac{m}{3}\right)(1-\theta)+\left(\frac{1}{2}-\frac{\ell}{3}\right)\theta.
\end{equation}
\end{lemma}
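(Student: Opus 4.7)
The plan is to derive the mixed-space interpolation in Lemma \ref{interpolation} from the pure-$x$ Gagliardo--Nirenberg inequality of Lemma \ref{1interpolation} by freezing the $v$ variable, and then to integrate the resulting pointwise-in-$v$ estimate against the weight $w(v)$ using H\"older's inequality with the exponent pair $(1/(1-\theta),1/\theta)$. In this way the $\theta$-splitting of the weight $w=w^{1-\theta}\cdot w^{\theta}$ matches exactly the $\theta$-splitting of the two factors on the right-hand side of the interpolation.

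More precisely, I would first apply Lemma \ref{1interpolation} for a.e.\ fixed $v\in\mathbb{R}^3_v$ to the function $x\mapsto f(x,v)$: with the same exponents $m,\ell,\alpha,p,\theta$ satisfying the stated scaling relation, this yields the pointwise-in-$v$ bound
\begin{equation*}
\norm{\nabla^\alpha f(\cdot,v)}_{L^p_x}
\lesssim \norm{\nabla^m f(\cdot,v)}_{L^2_x}^{1-\theta}\norm{\nabla^\ell f(\cdot,v)}_{L^2_x}^{\theta}.
\end{equation*}
Squaring this inequality and multiplying by $w(v)\ge 0$ gives
\begin{equation*}
w(v)\norm{\nabla^\alpha f(\cdot,v)}_{L^p_x}^2
\lesssim \Big(w(v)\norm{\nabla^m f(\cdot,v)}_{L^2_x}^{2}\Big)^{1-\theta}\Big(w(v)\norm{\nabla^\ell f(\cdot,v)}_{L^2_x}^{2}\Big)^{\theta},
\end{equation*}
where the weight has been split as $w=w^{1-\theta}\cdot w^{\theta}$.

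Integrating this over $v\in\mathbb{R}^3_v$ and applying H\"older's inequality with conjugate exponents $1/(1-\theta)$ and $1/\theta$ yields
\begin{equation*}
\int_{\mathbb{R}^3_v} w\norm{\nabla^\alpha f}_{L^p_x}^2\,dv
\lesssim \left(\int_{\mathbb{R}^3_v} w\norm{\nabla^m f}_{L^2_x}^2\,dv\right)^{1-\theta}\left(\int_{\mathbb{R}^3_v} w\norm{\nabla^\ell f}_{L^2_x}^2\,dv\right)^{\theta}.
\end{equation*}
Taking the square root gives \eqref{GN00} and completes the proof.

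There is essentially no obstacle here: the scaling and interpolation parameters are inherited verbatim from Lemma \ref{1interpolation}, and only two mild issues need attention. First, one should note that the pointwise-in-$v$ estimate is applied for a.e.\ $v$, with the implicit constant in Lemma \ref{1interpolation} depending only on $m,\ell,\alpha,p,\theta$ and not on $v$, so it factors out of the $v$-integration. Second, one should verify the H\"older step in the case $\theta\in\{0,1\}$ (degenerate endpoints), which are either trivial or reduce directly to the standard Gagliardo--Nirenberg bound; and observe that the inequality is vacuously true when the right-hand side is infinite. Beyond these routine remarks, the argument is a clean combination of the pure-$x$ interpolation and H\"older's inequality on $\mathbb{R}^3_v$.
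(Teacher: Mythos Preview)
Your proof is correct and follows essentially the same approach as the paper: apply Lemma \ref{1interpolation} pointwise in $v$, square, multiply by the weight $w(v)$, integrate in $v$, and then use H\"older's inequality with exponents $1/(1-\theta)$ and $1/\theta$ before taking the square root. Your additional remarks on the $v$-independence of the constant and the endpoint cases $\theta\in\{0,1\}$ are reasonable clarifications that the paper omits.
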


\begin{proof}
For any function $f(x,v)$, by Lemma \ref{1interpolation}, we have
\begin{equation}\label{GN1}
\norm{\nabla^\alpha f}_{L^p_x}\lesssim \norm{  \nabla^mf}_{L^2_x}^{1-\theta}\norm{ \nabla^\ell f}_{L^2_x}^{\theta}.
\end{equation}
Taking the square of \eqref{GN1} and then multiplying by $w(v)$, integrating over $\mathbb{R}^3_v$, by H\"older's inequality, we obtain
\begin{equation}\label{GN2}
\begin{split}
\int_{\mathbb{R}^3_v}w\norm{\nabla^\alpha f}_{L^p_x}^2dv
&\lesssim \int_{\mathbb{R}^3_v}w\norm{  \nabla^mf}_{L^2_x}^{2(1-\theta)}\norm{ \nabla^\ell f}_{L^2_x}^{2\theta}\,dv
\\&=\int_{\mathbb{R}^3_v}\left(w^{\frac{1}{2} }\norm{\nabla^mf}_{L^2_x}\right)^{2(1-\theta)} \left(w^{\frac{1}{2} }\norm{\nabla^\ell f}_{L^2_x}\right)^{2\theta}\,dv
\\&\le\left(\int_{\mathbb{R}^3_v}\left(w^{\frac{1}{2} }\norm{\nabla^mf}_{L^2_x}\right)^{2} \,dv\right)^{1-\theta}\left(\int_{\mathbb{R}^3_v} \left(w^{\frac{1}{2} }\norm{ \nabla^\ell f}_{L^2_x}\right)^{2\theta}\,dv\right)^{\theta}.
\end{split}
\end{equation}
Taking the square root of \eqref{GN2}, we get \eqref{GN00}.
\end{proof}

\subsection{Negative Sobolev norms}

We define the operator $\Lambda^s, s\in \mathbb{R}$ by
\begin{equation}\label{1Lambdas}
\Lambda^s f(x)=\int_{\mathbb{R}^3}|\xi|^s\hat{f}(\xi)e^{2\pi ix\cdot\xi}\,d\xi,
\end{equation}
where $\hat{f}$ is the  Fourier transform of $f$. We define the homogeneous Sobolev space
$\dot{H}^s$ of all $f$ for which $\norm{f}_{\dot{H}^s}$  is finite, where
\begin{equation}\label{1snorm}
\norm{f}_{\dot{H}^s}:=\norm{\Lambda^s f}_{L^2}=\norm{|\xi|^s \hat{f}}_{L^2}.
\end{equation}
We will use the non-positive index $s$. For convenience, we will change the index to be ``$-s$" with $s\ge 0$. We will employ the following special Sobolev interpolation:
\begin{lemma}\label{1-sinte}
Let $s\ge 0$ and $\ell\ge 0$, then we have
\begin{equation}\label{1-sinterpolation}
\norm{\nabla^\ell f}_{L^2}\le \norm{\nabla^{\ell+1} f}_{L^2}^{1-\theta}\norm{\Lambda^{-s} f}_{L^2}^\theta, \hbox{ where }\theta=\frac{1}{\ell+1+s}.
\end{equation}
\end{lemma}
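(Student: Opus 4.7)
\textbf{Proof proposal for Lemma \ref{1-sinte}.}

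The plan is to work entirely on the Fourier side using Parseval's identity together with Hölder's inequality. By the definitions \eqref{1Lambdas}--\eqref{1snorm}, the three quantities in the inequality are
\[
\norm{\nabla^\ell f}_{L^2}^2=\int_{\r3}|\xi|^{2\ell}|\hat f(\xi)|^2\,d\xi,\quad \norm{\nabla^{\ell+1}f}_{L^2}^2=\int_{\r3}|\xi|^{2(\ell+1)}|\hat f(\xi)|^2\,d\xi,\quad \norm{\Lambda^{-s}f}_{L^2}^2=\int_{\r3}|\xi|^{-2s}|\hat f(\xi)|^2\,d\xi.
\]

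First I would look for a factorization of the weight $|\xi|^{2\ell}$ as a product of powers of $|\xi|^{2(\ell+1)}$ and $|\xi|^{-2s}$ matching the target exponents $1-\theta$ and $\theta$. Writing $|\xi|^{2\ell}|\hat f|^2=\bigl(|\xi|^{2(\ell+1)}|\hat f|^2\bigr)^{1-\theta}\bigl(|\xi|^{-2s}|\hat f|^2\bigr)^{\theta}$ requires $2\ell=2(\ell+1)(1-\theta)-2s\theta$, which rearranges to $1=(\ell+1+s)\theta$; this forces exactly $\theta=\tfrac{1}{\ell+1+s}$, confirming the choice of $\theta$ in the statement.

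Next, I apply Hölder's inequality in $\xi$ with the conjugate exponents $\tfrac{1}{1-\theta}$ and $\tfrac{1}{\theta}$ (noting $0<\theta<1$ since $\ell\ge 0$ and $s\ge 0$, with the borderline case $\ell=0,s=0$ being trivial):
\[
\int_{\r3}|\xi|^{2\ell}|\hat f|^2\,d\xi\le \left(\int_{\r3}|\xi|^{2(\ell+1)}|\hat f|^2\,d\xi\right)^{1-\theta}\left(\int_{\r3}|\xi|^{-2s}|\hat f|^2\,d\xi\right)^{\theta}.
\]
Taking the square root yields \eqref{1-sinterpolation}.

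There is no real obstacle here: the lemma is a classical Fourier-interpolation identity and the only thing to verify is the algebraic matching of exponents, which uniquely determines $\theta$. The only mild subtlety is that the right-hand side may be infinite if $\hat f$ is not integrable near $\xi=0$ against $|\xi|^{-2s}$, but in that case the inequality is vacuous; when both factors on the right are finite the estimate above is rigorous, and that is precisely the setting in which the lemma is invoked in Section \ref{sec 2.3}.
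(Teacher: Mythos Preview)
Your proof is correct and follows essentially the same approach as the paper's own proof: both pass to the Fourier side via Parseval and apply H\"older's inequality with exponents $\tfrac{1}{1-\theta}$ and $\tfrac{1}{\theta}$ to the factorization $|\xi|^{2\ell}|\hat f|^2=(|\xi|^{2(\ell+1)}|\hat f|^2)^{1-\theta}(|\xi|^{-2s}|\hat f|^2)^{\theta}$. Your version is slightly more detailed in verifying the exponent algebra and discussing the borderline and vacuous cases, but the argument is identical in substance.
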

\begin{proof}
By the Parseval theorem, the definition of \eqref{1snorm} and H\"older's inequality, we have
\begin{equation}
\norm{\nabla^\ell f}_{L^2}
=\norm{|\xi|^\ell \hat{f}}_{L^2}\le  \norm{|\xi|^{\ell+1} \hat{f}}_{L^2}^{1-\theta}\norm{|\xi|^{-s} \hat{f}}_{L^2}^\theta
=\norm{\nabla^{\ell+1}f}_{L^2}^{1-\theta}\norm{\Lambda^{-s} f}_{L^2}^\theta.
\end{equation}
\end{proof}

We shall use the corresponding Sobolev interpolation for the functions on $\r3_x\times\r3_v$.
\begin{lemma}\label{-sinte}
Let $s\ge 0$ and $\ell\ge 0$, then we have
\begin{equation}
\norm{\nabla^\ell f}\lesssim \norm{\nabla^{\ell+1} f}^{1-\theta}\norm{\Lambda^{-s}f}^\theta, \hbox{ where }\theta=\frac{1}{\ell+1+s}.
\end{equation}
\end{lemma}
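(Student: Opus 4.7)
The plan is to apply the purely spatial interpolation inequality of Lemma \ref{1-sinte} pointwise in the velocity variable $v$, and then integrate over $v\in \mathbb{R}^3_v$ using H\"older's inequality, exactly in the spirit of how Lemma \ref{interpolation} was derived from Lemma \ref{1interpolation}.

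More precisely, I would first fix $v\in \mathbb{R}^3_v$ and regard $f(\cdot,v)$ as a function on $\mathbb{R}^3_x$. Since $\Lambda^{-s}$ and $\nabla^\ell$ both act only on the $x$-variable, Lemma \ref{1-sinte} applied to $f(\cdot,v)$ yields
\begin{equation}
\norm{\nabla^\ell f(\cdot,v)}_{L^2_x}\le \norm{\nabla^{\ell+1} f(\cdot,v)}_{L^2_x}^{1-\theta}\norm{\Lambda^{-s} f(\cdot,v)}_{L^2_x}^{\theta},
\end{equation}
with $\theta=1/(\ell+1+s)$.

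Next I would square this inequality and integrate in $v$, and then apply H\"older's inequality with conjugate exponents $1/(1-\theta)$ and $1/\theta$ to obtain
\begin{equation}
\int_{\mathbb{R}^3_v}\norm{\nabla^\ell f}_{L^2_x}^2\,dv
\le \left(\int_{\mathbb{R}^3_v}\norm{\nabla^{\ell+1} f}_{L^2_x}^{2}\,dv\right)^{1-\theta}\left(\int_{\mathbb{R}^3_v}\norm{\Lambda^{-s} f}_{L^2_x}^{2}\,dv\right)^{\theta}.
\end{equation}
Taking square roots and recognizing the right-hand side as $\norm{\nabla^{\ell+1} f}^{1-\theta}\norm{\Lambda^{-s} f}^{\theta}$ in the $L^2(\mathbb{R}^3_x\times\mathbb{R}^3_v)$ norm gives the desired estimate.

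There is no real obstacle here since the proof is a routine reduction to the $x$-only version combined with H\"older; the only point to be careful about is that $\theta\in [0,1]$ so that the H\"older step is legitimate, which is immediate from $s\ge 0$ and $\ell\ge 0$ (and if $\theta=0$ or $\theta=1$ the statement is trivial). The argument is essentially the weighted Lemma \ref{interpolation} specialized to weight $w\equiv 1$, with Lemma \ref{1-sinte} replacing Lemma \ref{1interpolation}.
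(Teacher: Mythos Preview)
Your proposal is correct and follows exactly the same approach as the paper: apply Lemma \ref{1-sinte} pointwise in $v$, then integrate over $\mathbb{R}^3_v$ using H\"older's inequality with exponents $1/(1-\theta)$ and $1/\theta$. The paper states this in a single terse line (``It follows by further taking the $L^2$ norm of \eqref{1-sinterpolation} over $\mathbb{R}_v^3$''), and you have simply unpacked that step explicitly.
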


\begin{proof}
It follows by further taking the $L^2$ norm of \eqref{1-sinterpolation} over $\mathbb{R}_v^3$.
\end{proof}

If $s\in(0,3)$, $\Lambda^{-s}f$ defined by \eqref{1Lambdas} is the Riesz potential. The Hardy-Littlewood-Sobolev theorem implies the following $L^p$ inequality for the Riesz potential:
\begin{lemma}\label{1Riesz}
Let $0<s<3,\ 1<p<q<\infty,\ 1/q+s/3=1/p$, then
\begin{equation}\label{1Riesz es}
\norm{\Lambda^{-s}f}_{L^q}\lesssim\norm{ f}_{L^p}.
\end{equation}
\end{lemma}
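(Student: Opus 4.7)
The plan is to reduce the claim to the classical Hardy-Littlewood-Sobolev inequality for Riesz potentials, which is essentially what this lemma is quoting. First I would use the Fourier definition \eqref{1Lambdas} together with the tempered distribution identity $\widehat{|\cdot|^{s-3}} = c'_s|\cdot|^{-s}$ (valid in dimension three exactly when $0 < s < 3$) to identify $\Lambda^{-s}f$ with the Riesz potential $I_s f(x) = c_s\int_{\mathbb{R}^3}|x-y|^{s-3}f(y)\,dy$. After this identification the statement becomes $\|I_s f\|_{L^q} \lesssim \|f\|_{L^p}$ under the scaling condition $1/q = 1/p - s/3$, so everything rests on bounding the Riesz potential.

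The standard route to prove this reduced bound is Marcinkiewicz interpolation applied to weak-type $(p,q)$ estimates. I would establish the weak-type bound $|\{x : |I_s f(x)| > \lambda\}| \lesssim \lambda^{-q}\|f\|_{L^p}^q$ by the classical truncation argument: split the kernel at a radius $R$ into a near part $|x|^{s-3}\chi_{\{|x|\le R\}}$, which lies in $L^1(\mathbb{R}^3)$ with norm of order $R^s$, and a far part $|x|^{s-3}\chi_{\{|x|>R\}}$, which lies in $L^{p'}(\mathbb{R}^3)$ with norm of order $R^{s-3/p}$ (finite thanks to $s < 3/p$, which is automatic in the hypothesis). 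The far part is then controlled in $L^\infty$ by H\"older, the near part in $L^p$ by Young's convolution inequality, and choosing $R$ so that the far contribution equals $\lambda/2$ and then applying Chebyshev to the near contribution produces exactly the weak exponent $q$. Interpolating between two such weak bounds at nearby exponent pairs yields the strong bound.

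The one technical point I expect to require care, and the reason the hypothesis demands the strict inequalities $1 < p < q < \infty$, is that $I_s$ is only of weak type at the endpoints $p=1$ and $q=\infty$ (a classical observation using characteristic functions of balls). Marcinkiewicz interpolation requires a little room on both sides of the target exponent in order to close, so the open range is essential. Since the hypothesis explicitly enforces this, the interpolation step is routine; this is really the classical Hardy-Littlewood-Sobolev theorem, and the paper rightly treats it as a citation rather than reproving it from scratch.
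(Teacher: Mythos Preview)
Your proposal is correct: the lemma is precisely the Hardy--Littlewood--Sobolev inequality, and your sketch via the Fourier identification of $\Lambda^{-s}$ with the Riesz potential $I_s$, followed by the standard kernel-truncation weak-type argument and Marcinkiewicz interpolation, is the classical route to proving it. The paper itself does not prove the lemma at all but simply cites Stein's book, so you have in fact supplied more detail than the original; your outline matches the standard proof found in that reference.
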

\begin{proof}
See \cite[pp. 119, Theorem 1]{S}.
\end{proof}

\subsection{Minkowski's  inequality}

In estimating the nonlinear terms, it is important to use the Minkowski's integral inequality to interchange the orders of integration over $x$ and $v$.
\begin{lemma}\label{Minkowski}
Let $1\le p<\infty$. Let $f$ be a measurable function on $\mathbb{R}_y^3\times \mathbb{R}_z^3$, then we have
\begin{equation}\label{mink00}
\left(\int_{\mathbb{R}_z^3}\left(\int_{\mathbb{R}_y^3}|f(y,z)|\,dy\right)^p\,dz\right)^\frac{1}{p}
\le\int_{\mathbb{R}_y^3}\left(\int_{\mathbb{R}_z^3}|f(y,z)|^p\,dz\right)^\frac{1}{p}\,dy.
\end{equation}
In particular, for $1\le p\le q\le \infty$, we have
\begin{equation}\label{min es}
\norm{f}_{L^q_zL^p_y}\le \norm{f}_{L^p_yL^q_z}.
\end{equation}
\end{lemma}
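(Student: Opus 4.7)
The plan is to establish \eqref{mink00} first by a standard duality argument, and then to deduce the mixed-norm form \eqref{min es} by applying \eqref{mink00} to the function $|f|^{p}$ with the reduced exponent $q/p \geq 1$. The case $p=1$ of \eqref{mink00} is an equality by Fubini's theorem, and the case $q = \infty$ of \eqref{min es} requires a separate but elementary argument; the genuinely substantive step is the duality proof for $1<p<\infty$.

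For \eqref{mink00} with $1<p<\infty$, I would write $F(z) := \int_{\mathbb{R}^3_y} |f(y,z)|\,dy$ and use the characterization $\norm{F}_{L^p_z} = \sup\{\int F g\,dz : g\geq 0,\ \norm{g}_{L^{p'}_z}\leq 1\}$, where $p'$ is the H\"older conjugate of $p$. For any such $g$, Tonelli's theorem and H\"older's inequality in the $z$-variable give
\begin{equation}
\int_{\mathbb{R}^3_z} F(z) g(z)\,dz = \int_{\mathbb{R}^3_y}\int_{\mathbb{R}^3_z} |f(y,z)| g(z)\,dz\,dy \leq \int_{\mathbb{R}^3_y}\norm{f(y,\cdot)}_{L^p_z}\norm{g}_{L^{p'}_z}\,dy,
\end{equation}
and taking the supremum over admissible $g$ yields \eqref{mink00}. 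One may first verify the estimate assuming both sides are finite (which can be reduced to the case of simple functions by monotone convergence) to justify the use of Tonelli freely.

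For \eqref{min es} with $1\leq p \leq q < \infty$, set $r := q/p \geq 1$ and apply \eqref{mink00} to the nonnegative function $h(y,z) := |f(y,z)|^p$ with exponent $r$ in place of $p$. This gives
\begin{equation}
\left(\int_{\mathbb{R}^3_z}\left(\int_{\mathbb{R}^3_y}|f|^p\,dy\right)^{q/p}\,dz\right)^{p/q} \leq \int_{\mathbb{R}^3_y}\left(\int_{\mathbb{R}^3_z}|f|^q\,dz\right)^{p/q}\,dy,
\end{equation}
and raising both sides to the power $1/p$ produces exactly $\norm{f}_{L^q_zL^p_y}\leq \norm{f}_{L^p_yL^q_z}$. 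For the endpoint $q=\infty$ I would argue directly: by definition of the essential supremum, for a.e.\ $z$ one has $|f(y,z)| \leq \norm{f(y,\cdot)}_{L^\infty_z}$, so Minkowski/triangle in $L^p_y$ yields $\norm{f(\cdot,z)}_{L^p_y} \leq \norm{\,\norm{f(y,\cdot)}_{L^\infty_z}\,}_{L^p_y}$ for a.e.\ $z$, and then taking $\operatorname{ess\,sup}_z$ gives \eqref{min es}. Alternatively one can pass to the limit $q\to\infty$ in the $q<\infty$ case when $f$ has compact support, together with a truncation argument.

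The main (mild) obstacle is simply bookkeeping the integrability assumptions needed to invoke Fubini/Tonelli and the duality of $L^p$-$L^{p'}$ without circularity: the cleanest route is to first assume $f \geq 0$ (which is allowed because only $|f|$ appears), then treat the case where the right-hand side of \eqref{mink00} is finite (otherwise the inequality is trivial), and finally lift to arbitrary measurable $f$ via monotone convergence on $\min(|f|,n)\mathbf{1}_{|y|+|z|\leq n}$. The derivation of \eqref{min es} from \eqref{mink00} is then purely algebraic via the substitution $h=|f|^p$.
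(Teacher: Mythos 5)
Your proposal is correct and, for the part the paper actually proves, follows the same route: the paper also deduces \eqref{min es} by applying \eqref{mink00} to $|f|^p$ with exponent $q/p\ge 1$ and handles the endpoint $q=\infty$ by the same direct essential-supremum argument. The only difference is that the paper simply cites \eqref{mink00} from Stein \cite[pp.\ 271, A.1]{S}, whereas you supply a self-contained (and standard, correct) duality proof of it.
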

\begin{proof}
The inequality \eqref{mink00} can be found in \cite[pp. 271, A.1]{S}, hence it remains to prove \eqref{min es}.
For $q=\infty$, we have
\begin{equation}
\norm{f}_{L^\infty_zL^p_y}=\sup_{z\in \r3}\left(\int_{\r3_y}|f|^p\,dy\right)^{1/p}\le \left(\int_{\r3_y}\left(\sup_{z\in
\r3}|f|\right)^p\,dv\right)^{1/p}=\norm{f}_{L^p_yL^\infty_z}.
\end{equation}
For $q<\infty$ and hence $1\le q/p<\infty$, then by \eqref{mink00}, we have
\begin{equation}
\norm{f}_{L^q_zL^p_y}= \left(\int_{\r3_z}\left(\int_{\r3_y}|f|^p\,dy\right)^{q/p}\,dz\right)^{1/q}\le
\left(\int_{\r3_y}\left(\int_{\r3_z}|f|^q\,dz\right)^{p/q}\,dv\right)^{1/p}=\norm{f}_{L^p_yL^q_z}.
\end{equation}
\end{proof}

\subsection{Boltzmann collision operators}

First, we collect some useful estimates of the linear collision operators.
\begin{lemma}\label{linear c} For $i=1,2$, we have
\begin{eqnarray}\label{1}&&\langle \mathcal{L}_i h_1,h_2\rangle=\langle
h_1,\mathcal{L}_ih_2\rangle,\ \langle \mathcal{L}_i h,h\rangle\ge 0,\\
\label{2}&&\mathcal{L}_i h=0\hbox{ if and only if }h={\bf P_i} h.
\end{eqnarray}
Moreover, there exist $\sigma_0>0$ and constants $C,C_{|\beta|}>0$
such that
\begin{eqnarray}\label{3}&&\langle\mathcal{L}_i
h,h\rangle\ge \sigma_0\left|\{{\bf I}-{\bf P_i}\}h\right|_\nu^2,\\
\label{4}&&\langle\nu\partial_\beta\mathcal{L}_i h,
\partial_\beta
 h\rangle\ge \frac{1}{2}\left|\nu\partial_\beta
 h\right|_2^2-C|h|_\nu^2,\\
\label{5}&&\langle\partial_\beta \mathcal{L}_i h,
\partial_\beta
 h\rangle\ge \frac{1}{2}\left|\partial_\beta
 h\right|_\nu^2-C_{|\beta|}\left|h\right|_\nu^2.\end{eqnarray}
\end{lemma}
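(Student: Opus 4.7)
The plan is to verify the five properties for each of $\mathcal{L}_1$ and $\mathcal{L}_2$ separately. Since $\mathcal{L}_1$ is the classical symmetric linearization of the Boltzmann collision operator, \eqref{1}--\eqref{3} are the Hilbert--Grad estimates and \eqref{4}--\eqref{5} are due to Guo \cite{G2002}, so I would simply cite these. My main task is then to establish the parallel statements for the non-symmetric operator $\mathcal{L}_2 h = -\frac{1}{\sqrt{\mu}}Q(\sqrt{\mu}h,\mu)$.

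For \eqref{1} and \eqref{2} applied to $\mathcal{L}_2$, I would substitute $\phi = h/\sqrt{\mu}$ and apply the standard change of variables $(v,u,\omega) \leftrightarrow (v',u',\omega)$ together with $\mu(v)\mu(u) = \mu(v')\mu(u')$. This symmetrization turns the quadratic form into
\begin{equation*}
\langle \mathcal{L}_2 h, h\rangle = \frac{1}{2}\iiint |(u-v)\cdot\omega|\,\mu(v)\mu(u)\,[\phi(v')-\phi(v)]^2\,d\omega\,du\,dv,
\end{equation*}
which immediately yields non-negativity, while the same manipulation applied to a mixed bracket gives self-adjointness. The null-space identification ${\rm span}\{\sqrt{\mu}\}$ is the delicate point: vanishing of the quadratic form forces $\phi(v') = \phi(v)$ for a.e.~$(v,u,\omega)$, and since any $v' = v - \lambda\omega$ is realized by choosing $u = v - \lambda\omega$, this constrains $\phi$ to be constant along every line through $v$ and hence globally constant. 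In sharp contrast to $\mathcal{L}_1$, whose symmetry yields the full five collision invariants via $\phi(v') + \phi(u') = \phi(v) + \phi(u)$, only the constants survive here.

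For the spectral gap \eqref{3} applied to $\mathcal{L}_2$, I would decompose $\mathcal{L}_2 h = \nu(v)h - K_2 h$ and verify that $K_2$ is compact on $L^2_v$ via Grad's classical decomposition, which produces integral kernels with Gaussian decay in both arguments. Compactness combined with non-negativity and the one-dimensional null space from the previous step then produces the gap estimate $\langle \mathcal{L}_2 h, h\rangle \ge \sigma_0|\{{\bf I}-{\bf P_2}\}h|_\nu^2$ by a standard Weyl-type contradiction argument. Finally, \eqref{4} and \eqref{5} for $\mathcal{L}_2$ would follow by repeating Guo's proof in \cite{G2002}: one shifts the weight or velocity derivative $\partial_\beta$ onto the dominant multiplicative factor $\nu(v)$ and controls the commutator $[\partial_\beta, K_2]$ using Grad's estimates on the differentiated collision kernel, observing that the $\mu$-decay of both arguments in $Q(\sqrt{\mu}h,\mu)$ preserves the same $\nu$-weighted bounds as in the symmetric case.

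The principal technical point is the null-space identification for $\mathcal{L}_2$, since the one-sided invariant condition $\phi(v') = \phi(v)$ must be exploited without the usual collision-invariant symmetry. Everything else reduces to the classical two-species theory referenced in \cite{G2003}, where $[\mathcal{L}_1, \mathcal{L}_2]$ is identified with Guo's linearized operator $L$ for the two-species Boltzmann system.
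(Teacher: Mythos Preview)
Your proposal is correct and is essentially an expanded version of the paper's own treatment: the paper simply cites \cite[Lemma~1]{G2003} or \cite[Lemma~3.1]{W} for \eqref{1}--\eqref{2} and \cite[Lemma~3.2--3.3]{G2006} for \eqref{3}--\eqref{5}, whereas you sketch the underlying arguments (the symmetrization identity for $\mathcal{L}_2$, the constancy argument for its null space, and the compactness/commutator estimates). The only minor discrepancy is that you point to \cite{G2002} for \eqref{4}--\eqref{5} while the paper cites \cite{G2006}, but the relevant estimates appear in both.
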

\begin{proof}
We refer to \cite[Lemma 1]{G2003} or \cite[Lemma 3.1]{W} for \eqref{1}--\eqref{2}. While for the proof of \eqref{3}--\eqref{5}, we refer to \cite[Lemma
3.2--3.3]{G2006}.
\end{proof}

Now, we collect some useful estimates of the nonlinear collision operator.
\begin{lemma}\label{nonlinearcol1}
There exists $C>0$ such that
\begin{equation}\label{nonlineares2}
|\langle\Gamma(h_1,h_2),h_3\rangle|+|\langle\Gamma(h_2,h_1), h_3\rangle|\le  C\sup_{v}\{\nu^3 h_3\}|h_1|_2|h_2|_2.
\end{equation}
Moreover, for any $0\le \eta \le 1$, we have
\begin{equation}\label{nonlineares3}
|\nu^{-\eta}\Gamma(h_1,h_2)|_2\le C\left\{|\nu^{1-\eta} h_1|_2|h_2|_2+|\nu^{1-\eta}h_2|_2|h_1|_2\right\}.
\end{equation}
\end{lemma}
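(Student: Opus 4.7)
The strategy is to decompose the operator into gain and loss parts and estimate each separately. Using the Maxwellian identity $\mu(v)\mu(u)=\mu(v')\mu(u')$, I rewrite
\[
\Gamma(h_1,h_2)(v)=\int\int|(u-v)\cdot\omega|\sqrt{\mu(u)}\bigl\{h_1(v')h_2(u')-h_1(v)h_2(u)\bigr\}d\omega\,du,
\]
and call the two pieces $\Gamma_+$ (gain) and $\Gamma_-$ (loss). After the angular integration $\int_{\mathbb{S}^2}|(u-v)\cdot\omega|\,d\omega=C|v-u|$, the loss simplifies to $\Gamma_-(h_1,h_2)(v)=Ch_1(v)\int|v-u|\sqrt{\mu(u)}h_2(u)\,du$.

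For \eqref{nonlineares3}, I would first treat the loss pointwise in $v$. Cauchy--Schwarz in $u$ together with the explicit bound $\int|v-u|^2\mu(u)\,du\le C\nu(v)^2$ yields $|\Gamma_-(h_1,h_2)(v)|\le C\nu(v)|h_1(v)||h_2|_2$, and multiplying by $\nu^{-\eta}(v)$ and taking the $L^2_v$-norm delivers $|\nu^{-\eta}\Gamma_-(h_1,h_2)|_2\le C|\nu^{1-\eta}h_1|_2|h_2|_2$. For the gain, I would apply Cauchy--Schwarz against the measure $|(u-v)\cdot\omega|\sqrt{\mu(u)}\,d\omega\,du$, whose total mass is bounded by $C\nu(v)$; the other factor reads
\[
\int\int|(u-v)\cdot\omega|\sqrt{\mu(u)}\,|h_1(v')|^2|h_2(u')|^2\,d\omega\,du,
\]
and I would invoke the involutive change of variables $(v,u,\omega)\mapsto(v',u',\omega)$ (unit Jacobian, $|(u-v)\cdot\omega|$ invariant) combined with the Maxwellian symmetry $\mu(u)=\mu(v')\mu(u')/\mu(v)$ to express the integral in terms of $h_1,h_2$ at unprimed arguments, producing the desired $|\nu^{1-\eta}h_1|_2|h_2|_2+|\nu^{1-\eta}h_2|_2|h_1|_2$ after integrating in $v$.

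For \eqref{nonlineares2}, I would dualize against $h_3$ using the crude pointwise bound $|h_3(v)|\le\nu^{-3}(v)\sup_v\{\nu^3h_3\}$, reducing matters to controlling $\int\nu^{-3}(v)|\Gamma(h_1,h_2)(v)|\,dv$. The loss piece is immediate from the pointwise estimate above:
\[
\int\nu^{-3}(v)|\Gamma_-(h_1,h_2)(v)|\,dv\le C|h_2|_2\int\nu^{-2}(v)|h_1(v)|\,dv\le C\Bigl(\int\nu^{-4}\,dv\Bigr)^{1/2}|h_1|_2|h_2|_2,
\]
and $\nu^{-4}\in L^1(\mathbb{R}^3_v)$ since $\nu(v)\sim 1+|v|$. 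The gain piece is handled analogously using the change of variables above. The companion term $\langle\Gamma(h_2,h_1),h_3\rangle$ is obtained by swapping the roles of $h_1,h_2$.

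The main obstacle is the gain-term estimate: the post-collisional velocities $v',u'$ depend nonlinearly on $(v,u,\omega)$, so neither $h_1(v')$ nor $h_2(u')$ separates as a function of the outer integration variable. The cure is the pre-post collisional involution combined with the Maxwellian invariance, which converts the Gaussian weights between primed and unprimed variables and reduces the gain estimate to a structurally loss-like computation. Everything else---Cauchy--Schwarz in $L^2_v$ and the integrability of $\nu^{-4}$ on $\mathbb{R}^3$---is routine.
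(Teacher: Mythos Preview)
The paper does not actually prove this lemma: its ``proof'' consists solely of citations to \cite[Lemma 2.3]{G2002} for \eqref{nonlineares2} and \cite[Lemma 2.7]{UY2006} for \eqref{nonlineares3}. Your proposal, by contrast, sketches a direct argument, and the strategy you describe---gain/loss decomposition, Cauchy--Schwarz against the kernel $|(u-v)\cdot\omega|\sqrt{\mu(u)}$, and the pre/post-collisional involution---is precisely the machinery used in those cited references. So at the level of approach there is no difference to report; you are reconstructing what the paper outsources.

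Your treatment of the loss term is correct and complete. For the gain term, however, the sentence ``producing the desired $|\nu^{1-\eta}h_1|_2|h_2|_2+|\nu^{1-\eta}h_2|_2|h_1|_2$ after integrating in $v$'' hides a genuine step. After the involution, the weights $\nu^{1-2\eta}(v)$ and $\sqrt{\mu(u)}$ are evaluated at the \emph{primed} variables (in the new coordinates), which still depend on $\omega$; you cannot simply pull them out. One must either (i) use energy conservation $|v'|^2+|u'|^2=|v|^2+|u|^2$ to bound $\nu(v')\lesssim\nu(v)+\nu(u)$ and $\sqrt{\mu(u')}\le 1$, then split the resulting sum, or (ii) distribute the weight differently in the Cauchy--Schwarz step so that the $\nu$-power lands on the correct factor before the change of variables. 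Either route works and is standard in the cited papers, but as written your sketch does not indicate which you intend, and the claim does not follow automatically from the involution alone. The same remark applies to your gain-term treatment of \eqref{nonlineares2}.
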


\begin{proof}
We refer to \cite[Lemma 2.3]{G2002} for \eqref{nonlineares2}, and \cite[Lemma 2.7]{UY2006} for \eqref{nonlineares3}.
\end{proof}

\end{document}